\documentclass[a4paper,fleqn]{cas-sc}

\ExplSyntaxOn
\cs_gset:Npn \__first_footerline:
{
  \group_begin:
  \small
  \sffamily
  \ifnum\theblind>0\relax
  \else
	\__short_authors: ~
  \fi
  { \rmfamily \itshape}
  \group_end:
}
\ExplSyntaxOff

\usepackage[numbers]{natbib}

\usepackage{amsthm}
\usepackage{placeins}
\usepackage{cleveref}
\usepackage{enumitem}
\usepackage{subcaption}
\usepackage{algorithm}
\usepackage{algpseudocode}

\theoremstyle{definition}
\newtheorem{definition}{Definition}
\newtheorem{theorem}{Theorem}

\DeclareMathOperator \supp{supp}
\DeclareMathOperator \GFT{GFT}
\DeclareMathOperator \IGFT{IGFT}
\DeclareMathOperator \hann{hann}
\DeclareMathOperator \diag{diag}
\DeclareMathOperator \RFFT{RFFT}

\begin{document}
\let\WriteBookmarks\relax
\def\floatpagepagefraction{1}
\def\textpagefraction{.001}

\shorttitle{Graph Iterative Filtering methods}

\shortauthors{G. Scarlato, A. Cicone, and M. Donatelli}

\title [mode = title]{Graph Iterative Filtering methods for the analysis of nonstationary signals on graphs}

\author[1]{Giuseppe Scarlato}[orcid=0009-0009-1166-1860]

\ead{gscarlato@uninsubria.it}

\ead[url]{https://giuseppe499.github.io/}

\affiliation[1]{organization={Dipartimento di Scienza e Alta Tecnologia, Università dell'Insubria},
            addressline={Via Valleggio n.11},
            city={Como},
            postcode={22100},
            country={Italy}}

\author[2,3,4]{Antonio Cicone}[orcid=0000-0002-8107-9624]

\cormark[2]

\ead{antonio.cicone@univaq.it}

\ead[url]{https://people.disim.univaq.it/~antonio.cicone/}

\affiliation[2]{organization={Department of Engineering and Computer Science and Mathematics, Università degli Studi dell'Aquila},
            addressline={via Vetoio n.1},
            city={L'Aquila},
            postcode={67100},
            country={Italy}}
\affiliation[3]{organization={Istituto di Astrofisica e Planetologia Spaziali, INAF},
            addressline={Via del Fosso del Cavaliere 100},
            city={Rome},
            postcode={00133},
            country={Italy}}
\affiliation[4]{organization={Istituto Nazionale di Geofisica e Vulcanologia},
            addressline={Via di Vigna Murata 605},
            city={Rome},
            postcode={00143},
            country={Italy}}

\author[1]{Marco Donatelli}[orcid=0000-0001-7958-9126]

\ead{marco.donatelli@uninsubria.it}

\cortext[2]{Corresponding author}

\begin{abstract}
In the analysis of real-world data, extracting meaningful features from signals is a crucial task. This is particularly challenging when signals contain non-stationary frequency components. The Iterative Filtering (IF) method has proven to be an effective tool for decomposing such signals. However, such a technique cannot handle directly data that have been sampled non-uniformly. On the other hand, graph signal processing has gained increasing attention due to its versatility and wide range of applications, and it can handle data sampled both uniformly and non-uniformly.

In this work, we propose two algorithms that extend the IF method to signals defined on graphs. In addition, we provide a unified convergence analysis for the different IF variants. Finally, numerical experiments on a variety of graphs, including real-world data, confirm the effectiveness of the proposed methods.
In particular, we test our algorithms on seismic data and the total electron content of the ionosphere. Those data are by their nature non-uniformly sampled, and, therefore, they cannot be directly analyzed by the standard IF method.

\end{abstract}

\begin{keywords}
  graph signal processing \sep graph Fourier transform \sep iterative filtering \sep non-stationary signal decomposition \sep time-frequency analysis \sep empirical mode decomposition
\end{keywords}

\maketitle

\section{Introduction}
\label{sec:introduction}
Signal processing often requires the decomposition of signals into frequency components.
  A common approach is to use the Fourier transform \cite{bracewell1978fourier}, which is, however, inherently unable to capture time-varying frequencies and amplitudes \cite{Cicone2019}.
Since many real-world signals are composed of non-stationary frequency components with time-varying amplitudes, several methods have been developed to overcome this issue, such as the Short-Time Fourier Transform (STFT) \cite{Cohen1994-STFT, Gabor1947-STFT}, the Wavelet Transform \cite{Daubechies1988-Wavelet,Wavelet}, and the Synchrosqueezing Transform \cite{Synchrosqueezing,DAUBECHIES-Synchrosqueezed}.
The main issue with these methods is that they rely on a fixed basis, which may not adapt well to local features.
For this reason, in 1998, the \emph{Empirical Mode Decomposition}~\mbox{(EMD)} method was proposed \cite{EMD} as an adaptive and data-driven approach for decomposing non-stationary signals.
The main idea of EMD is to extract \emph{Intrinsic Mode Functions}~(IMFs) by iteratively removing the local mean of the signal.
For this method, the local mean is computed by interpolating the local maxima and minima of the signal and averaging the two interpolated curves.

Even if EMD has been widely used and has proven to be effective in many applications, it has some theoretical and practical issues, such as the lack of a rigorous mathematical analysis and the sensitivity to noise \cite{Chui_EMD,Huang-EMD_Math_Problems}.
For this reason, another data-driven iterative method, called \emph{Iterative Filtering}~\mbox{(IF)} was proposed \cite{IF}.
Instead of using interpolation to compute the local mean, IF uses a convolution with a compactly supported window function.
This allows a rigorous mathematical analysis, and a clear understanding of the convergence of the IMFs \cite{ALIF}.
Furthermore, by leveraging the convolution theorem, IF can be implemented efficiently using the \emph{Fast Fourier Transform} (FFT) \cite{FIF}.
Even if EMD and IF have been originally proposed for 1D signals, they have also been extended to higher dimensions \cite{MdMvIF,8447300}.
Furthermore, there have been some attempts to extend those methods to non-Euclidean domains, such as the sphere \cite{Sphere_EMD,SIF}, and, for the EMD method, to graphs \cite{Graph_EMD}.

  Graph signal processing has gained increasing attention in recent years due to its versatility and wide range of applications, such as image processing and machine learning \cite{GSP-review}. It provides a powerful framework for analyzing signals defined on irregular domains, such as social networks, sensor networks, and transportation networks.

  For this reason, it is natural to ask if the IF method can be extended to signals defined on graphs. This is particularly interesting because such a method can be used to decompose signals sampled non-uniformly, like geophysical data sampled using instruments distributed over an area \cite{cesaroni2021ionoring}, or economic data collected on a region \cite{ROMANO2019256}. This is possible by triangulating the sampled points, weighting the edges of the triangulation based on the relative distances, and using the corresponding graph structure.

To understand how to extend the IF method to graphs, let us consider the classical 1D IF algorithm \cite{IF}.
In this case, the IF algorithm uses two main tools: convolution and peak counting for deciding the window size.
Therefore, to extend this algorithm, we need to find some
analogous to these tools on graphs. For what concerns the convolution, it exists in the literature an extension of the Fourier transform to graphs, called \emph{Graph Fourier Transform}~(GFT). With this tool, by mimicking the convolution theorem, we can define an operation that is in some ways similar to the standard convolution~\cite{GFT_convolution}.
Using that, we developed an algorithm that we will
call \emph{Graph Fourier Transform Iterative Filtering}~\mbox{(GFT-IF)}.

An issue that arises with this approach is that we have to define the
windows in the spectral domain, which might not be trivial. Furthermore, by doing so we do not have direct control over the support size of the window in the vertex domain, and it becomes difficult to link the window kernel choice to the signal and, more importantly, to the number of extrema.

To address this issue, we also propose another method called \emph{Distance Based Iterative Filtering}~\mbox{(DB-IF)}.
In general, to convolve a signal with a window, we need the ability to translate the window to be centered at each vertex of the graph. This might not be trivial, but we can avoid this problem by directly constructing the windows centered at each vertex based on the distance on the graph. With this approach, we can directly control the
window support size. Therefore, if we have an empirical way to choose the window size, as for the 1D IF case, we can use this algorithm to adaptively decompose a signal on a graph.

  Finally, in this work we give a unified convergence analysis for different IF variants \cite{SIF,MvFIF,MIF,IF}.
  We use this result to give a sufficient condition for the convergence, and to characterize the sequence generated by the inner loop of both the GFT-IF and DB-IF algorithms.

The rest of the work is organized as follows.
In \Cref{sec:IF}, we briefly recall the IF method in both the continuous and discrete settings.
In \Cref{sec:general_IF}, we give a general formulation for the class of IF algorithms on discrete signals, and we give a sufficient condition that guarantees the convergence of the inner loop of those algorithms. This result allows us to unify the convergence analysis of different variants of the IF algorithm and will be used to analyze the convergence of the proposed GFT-IF and DB-IF algorithms.
In \Cref{sec:GFT-IF}, we define the GFT, we introduce the GFT-IF algorithm, and we analyze its convergence.
In \Cref{sec:DB-IF}, we present the DB-IF algorithm, and we analyze its convergence.
  A discussion on the computational cost of the proposed algorithms is given in \Cref{sec:comput_cost}.
Finally, in \Cref{sec:numerical_results}, we present some numerical results comparing the proposed methods and the classical IF algorithm on different graphs and some real-world data.

\section{Iterative Filtering}
\label{sec:IF}
In this section, we briefly review the main features and theoretical results related to the IF method, both in a continuous and discrete setting.

\subsection{IF in the continuous setting}
The key idea of the IF method is to decompose a signal $s:\mathbb{R}\to\mathbb{R}$ into a sum of oscillatory components, called Intrinsic Mode Functions~(IMFs), by iteratively removing the moving average of the signal.

The moving average is computed by convolving the signal $s$ with a window function ${w:\mathbb{R}\to\mathbb{R}}$.
The window function $w$ is chosen to have the following properties:
\begin{itemize}
  \item $w(x) \geq 0$ for all $x\in\mathbb{R}$,
  \item $w(x) = w(-x)$ for all $x\in\mathbb{R}$,
  \item $w\in C^0(\mathbb{R})$,
  \item $\supp(w) = [-l,l]$ for some $l>0$,
  \item $\int_{\mathbb{R}} w(x) dx = \int_{-l}^{l} w(x) dx = 1$.
\end{itemize}
Furthermore, to guarantee the convergence of the IF algorithm, $w$ is chosen such that
\begin{equation*}
  w = v * v,
\end{equation*}
where $*$ is the standard convolution operator and $v:\mathbb{R}\to\mathbb{R}$ has the same properties as $w$, but with $\supp(v) = \left[-\frac{l}{2},\frac{l}{2}\right]$.

The pseudo-code of the IF algorithm is shown in \Cref{algo:IF}.
It consists of an outer loop that extracts the IMFs one by one, and an inner loop that computes each IMF by iteratively removing the moving average of the signal. This inner loop is usually called \emph{sifting process}.

\begin{algorithm}[h!]
  \caption{\textbf{Iterative Filtering} IMF = IF$(s)$}\label{algo:IF}
  \begin{algorithmic}[1]
    \State IMF = $\left\{\right\}$
    \While{the number of extrema of $s$ $\geq 2$}
    \State $s_0 = s$
    \State compute the filter length $l$ for $s$
    \State generate the window $w_l:\mathbb{R}\to\mathbb{R}$ with $\supp(w_l)=[-l,l]$
    \State $m=0$
    \While{the stopping criterion is not satisfied}
    \State  $s_{m+1} = s_{m} - s_{m} * w_l$
    \State  $m = m+1$
    \EndWhile
    \State IMF = IMF$\,\cup\,\{ s_{m}\}$
    \State $s=s-s_{m}$
    \EndWhile
    \State IMF = IMF$\,\cup\,  \{ s\}$
  \end{algorithmic}
\end{algorithm}

\subsection{IF in the discrete setting}
The IF algorithm can be easily extended for discrete signals $s:\{0,\ldots,n-1\}\to\mathbb{R}$ by replacing the continuous convolution with the discrete convolution.
In this case, the window $w:\{0,\ldots,n-1\}\to\mathbb{R}$ is chosen with similar properties as in the continuous case:
\begin{itemize}
  \item $w_i \geq 0$ for all $i=0,\ldots,n-1$,
  \item $w_i = w_{n-i}$ for all $i=1,\ldots,n-1$,
  \item $\sum_{i=0}^{n-1} w_i = \|w\|_1 = 1$,
\end{itemize}
As for the continuous case, a simple way to guarantee the convergence of the inner loop of the IF algorithm is to choose the window $w$ as the self-convolution of a base window $v$:
\begin{equation*}
  w = v * v.
\end{equation*}

\section{General discrete IF algorithm and its convergence}
\label{sec:general_IF}
We observe that several variants of the IF algorithm \cite{SIF,MvFIF,MIF,IF} usually only differ in the way the sifting process is performed (i.e., the inner loop update).
We can leverage this observation to give a general formulation of the IF algorithm for discrete signals, and to unify the convergence analysis of those variants.
In general, we can write a meta algorithm for the IF method as shown in \Cref{algo:general_IF}.

\begin{algorithm}[h!]
  \caption{\textbf{General discrete IF algorithm} IMF = IF$(s)$}\label{algo:general_IF}
  \begin{algorithmic}[1]
    \Require $X$ is a finite set of indices with $|X|=n$.
    \Require $s:X\to\mathbb{R}$ is a signal (i.e., $s\in\mathbb{R}^{n}$).
    \State IMF = $\left\{\right\}$
    \While{the number of extrema of $s$ $\geq 2$}
    \State $s_0 = s$
    \State choose $W\in\mathbb{R}^{n\times n}$ based on $s$.
    \State $m=0$
    \While{the stopping criterion is not satisfied}
    \State  $s_{m+1} = s_{m} - W s_{m}$
    \State  $m = m+1$
    \EndWhile
    \State IMF = IMF$\,\cup\,  \{ s_{m}\}$
    \State $s=s-s_{m}$
    \EndWhile
    \State IMF = IMF$\,\cup\,  \{ s\}$
  \end{algorithmic}
\end{algorithm}

By choosing different sets $X$ and different linear operators $W$, we obtain different variants of the IF algorithm. For example, if we choose $X = \{0,\ldots,n-1\}$ and $W$ as the convolution matrix associated to a window $w\in\mathbb{R}^n$, we obtain the standard IF algorithm \cite{IF}.
If we choose $X = \{0,\ldots,n-1\} \times \{0,\ldots,m-1\}$ and $W$ as the convolution matrix associated to a 2D window $w\in\mathbb{R}^{n\times m}$, we obtain the FIF2 algorithm.
In a similar way, we can also obtain the \emph{Multivariate Fast Iterative Filtering}~(MvFIF) \cite{MvFIF}, the \emph{Multidimensional Iterative Filtering}~(MIF) \cite{MIF}, and the \emph{Spherical Iterative Filtering}~(SIF) \cite{SIF} algorithms.

As proven by the following theorem, we can give a sufficient condition on the linear operator $W$ that guarantees the convergence of the inner loop of \Cref{algo:general_IF}, and we can also characterize the limit of the generated sequence $\left\{s_m\right\}$.

\begin{theorem}[General discrete IF convergence]
  \label{thm:general_IF_convergence}
  Let $W\in\mathbb{R}^{n\times n}$ such that
  \begin{enumerate}[label={\arabic*)}]
    \item $WW^H = W^HW$ ($W$ is a normal matrix),\label{item:normal_general_IF_convergence}
    \item $|1-\lambda_i| < 1 \vee \lambda_i = 0$ for all eigenvalues $\lambda_i$ of $W$.\label{item:eigen_general_IF_convergence}
  \end{enumerate}
  Then the inner loop of the general discrete IF algorithm (\Cref{algo:general_IF}) converges to
  \begin{equation*}
    s_\infty = Q D Q^H s_0,
  \end{equation*}
  where $Q$ is a unitary matrix such that $W = Q T Q^H$ with $T$ a diagonal matrix, and $D$ is a diagonal matrix defined as
  \begin{equation*}
    D_{i,i} =
    \begin{cases}
      1 & \text{if } \lambda_i = T_{i,i} = 0, \\
      0 & \text{if } |1-\lambda_i| < 1.
    \end{cases}
  \end{equation*}
\end{theorem}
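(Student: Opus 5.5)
The plan is to use the spectral theorem for normal matrices to diagonalize the iteration and reduce convergence to a family of scalar geometric sequences.

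\textbf{Step 1 (closed form).} The inner loop gives $s_{m+1} = (I-W)s_m$, so $s_m = (I-W)^m s_0$ for every $m$.

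\textbf{Step 2 (diagonalization).} By hypothesis \ref{item:normal_general_IF_convergence}, $W$ is normal. The spectral theorem then provides a unitary $Q$ and a diagonal $T=\diag(\lambda_1,\dots,\lambda_n)$ with $W = QTQ^H$, where the $\lambda_i$ are the eigenvalues of $W$. Because $QQ^H=I$, we have $I-W = Q(I-T)Q^H$, and therefore
\begin{equation*}
  s_m = Q (I-T)^m Q^H s_0 .
\end{equation*}
The matrix $(I-T)^m$ is diagonal with entries $(1-\lambda_i)^m$.

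\textbf{Step 3 (entrywise limits).} Hypothesis \ref{item:eigen_general_IF_convergence} splits each index into one of two cases.
\begin{itemize}
  \item If $\lambda_i=0$, then $(1-\lambda_i)^m = 1$ for all $m$.
  \item If $|1-\lambda_i|<1$, then $|(1-\lambda_i)^m| = |1-\lambda_i|^m \to 0$.
\end{itemize}
These cases are mutually exclusive, since $\lambda_i=0$ gives $|1-\lambda_i|=1$, so $D$ is well defined. Hence $(I-T)^m \to D$ entrywise, and so in any matrix norm because the dimension is finite.

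\textbf{Step 4 (passing to the limit).} Multiplication by the fixed matrices $Q$ and $Q^H$ and application to the fixed vector $s_0$ are continuous. Therefore $s_m \to QDQ^Hs_0$.

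A quantitative version follows from unitary invariance of the 2-norm:
\begin{equation*}
  \|s_m - s_\infty\|_2 \le \rho^m \|s_0\|_2, \qquad \rho = \max_{\lambda_i\neq 0}|1-\lambda_i| < 1 .
\end{equation*}
This shows the convergence is linear.

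\textbf{Main obstacle.} There is no serious obstacle; the one point needing care is the use of normality. It guarantees a \emph{unitary} diagonalization, so the iteration splits into independent scalar recurrences. Without normality, a Jordan block attached to the eigenvalue $0$ (so eigenvalue $1$ of $I-W$) would produce polynomially growing terms and the iteration would diverge.

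The answer is stated in terms of the chosen $Q$, and $Q$ is not unique when eigenvalues repeat. It is worth noting that $QDQ^H$ does not depend on this choice: it is the orthogonal projector onto $\ker W$, because normality makes the eigenspaces orthogonal. The limit $s_\infty$ is therefore well defined.
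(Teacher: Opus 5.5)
Your proposal is correct and follows the paper's proof. The paper obtains $W = QTQ^H$ from the Schur decomposition plus normality, which is equivalent to your direct appeal to the spectral theorem; it then writes $s_m = Q(I-T)^m Q^H s_0$ and takes the entrywise limit of $(1-\lambda_i)^m$. Your extra remarks go slightly beyond what the paper states and are correct: the linear rate $\rho^m$, and the fact that $QDQ^H$ is the orthogonal projector onto $\ker W$, so it does not depend on the choice of $Q$.
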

\begin{proof}
  Using Schur decomposition, we can write $W = Q T Q^H$ where $Q$ is a unitary matrix and $T$ is an upper triangular matrix. Furthermore, since $W$ is normal, we have that $T$ is a diagonal matrix.
  From \Cref{algo:general_IF}, we have that
  \begin{equation*}
    s_{m} = s_{m-1} - W s_{m-1} = (I - W) s_{m-1}.
  \end{equation*}
  Iterating this relation and substituting the Schur decomposition of $W$, we obtain
  \begin{equation*}
    s_{m} = (I - W)^m s_0 = Q (I - T)^m Q^H s_0.
  \end{equation*}
  We observe that $I - T$ is a diagonal matrix with diagonal entries $1 - \lambda_i$, where $\lambda_i=T_{i,i}$ are the eigenvalues of $W$. Therefore, the sequence $\left\{s_m\right\}$ converges if and only if
  \begin{equation*}
    |1-\lambda_i| < 1 \vee \lambda_i = 0 \quad \forall\ i.
  \end{equation*}
  In particular, if this condition is satisfied, we have that
  \begin{equation*}
    s_\infty = \lim_{m\to\infty} s_{m} = \lim_{m\to\infty} Q (I - T)^m Q^H s_0 =
    Q D Q^H s_0,
  \end{equation*}
  with
  \begin{equation*}
    D_{i,i} =
    \begin{cases}
      1 & \text{if } \lambda_i = T_{i,i} = 0, \\
      0 & \text{if } |1-\lambda_i| < 1.
    \end{cases}
  \end{equation*}
\end{proof}

In practical applications, an easy way to guarantee the convergence of the inner loop of \Cref{algo:general_IF} is to construct the
window matrix $W$ as the self-product of a base window matrix $B$, which is
symmetric and stochastic. This ensures that the window matrix $W$ is also
stochastic and symmetric. Furthermore, $W$ is a positive semi-definite matrix
\begin{equation}
  x^T W x = x^T B B x = (B x)^T B x \geq 0 \quad \forall\ x\in\mathbb{R}^n.
\end{equation}
Since $W$ is stochastic, Greshgorin's circle theorem implies that all the eigenvalues of $W$ are contained in the unit disk and, since $W$ is positive semi-definite, all the eigenvalues are real and non-negative. This means that $\lambda_i \in [0,1]$ for all eigenvalues $\lambda_i$ of $W$.
Therefore, the conditions of \Cref{thm:general_IF_convergence} are satisfied.
We also observe that requiring that the window matrix $W$ is stochastic ensures that the energy of the moving average is not increased compared to the energy of the signal in the sense that
\begin{equation*}
  \|W\|_1 = 1 \wedge \|W\|_\infty = 1.
\end{equation*}

\section{Graph Fourier Transform Iterative Filtering (GFT-IF)}
\label{sec:GFT-IF}
Our new GFT-IF algorithm is based on the \emph{Graph Fourier Transform}, hence we first introduce the necessary mathematical tools.

\subsection{Graph Fourier Transform and Graph Convolution}
\begin{definition}[Undirected weighted graph]
  An undirected weighted graph $G$ is a tuple ${G = (V,E,w)}$ with $V$ the set of
  vertices, $E$ the set of edges containing unordered pairs of vertices, and $w:E\to\mathbb{R}$ the weight function.
\end{definition}
\begin{definition}[Adjacency Matrix]
  Let $G = (V,E,w)$ be an undirected weighted graph. The adjacency
  matrix $A$ of $G$ is defined as
  \begin{equation}
    A_{i,j} =
    \begin{cases}
      w(i,j) & \text{if } (i,j)\in E, \\
      0      & \text{otherwise}.
    \end{cases}
  \end{equation}
\end{definition}
\begin{definition}[Degree Matrix]
  Let $G = (V,E,w)$ be an undirected weighted graph. The degree
  matrix $D$ of $G$ is a diagonal matrix defined as
  \begin{equation}
    D_{i,i} = \sum_{j\in V} A_{i,j}.
  \end{equation}
\end{definition}

\begin{definition}[Graph Laplacian]
  Let $G = (V,E,w)$ be an undirected weighted graph. The graph
  Laplacian $L$ of $G$ is defined as
  \begin{equation}
    L = D - A.
  \end{equation}
  Note that $L$ is a real symmetric positive semi-definite matrix. Therefore, it is diagonalizable by an orthogonal matrix and all its eigenvalues are real and non-negative.
\end{definition}

\begin{definition}[Graph Fourier Transform]
  Let $G = (V,E,w)$ be an undirected weighted graph. The graph
  Fourier Transform of a signal $s:V\to\mathbb{R}$ is defined as
  \begin{equation}
    \hat{s} = \GFT(s) = U^T s,
  \end{equation}
  where $U$ is an orthonormal matrix such that
  \begin{equation}
    L = U\Lambda U^T
  \end{equation}
  and $\Lambda$ is a diagonal matrix.

  Similarly, the inverse Graph Fourier Transform of a signal $\hat
  s:V\to\mathbb{R}$ is defined as
  \begin{equation}
    s = \IGFT(\hat{s}) = U \hat{s}.
  \end{equation}
\end{definition}

\begin{definition}[Graph Convolution]
  \label{def:graph_convolution}
  Let $G = (V,E,w)$ be an undirected weighted graph. The graph convolution of the
  signals $s,v:V\to\mathbb{R}$ is defined as
  \begin{equation}
    s \circledast v = \IGFT\left(\hat s \odot \hat v\right) = U \left[(U^T s) \odot (U^T v)\right]
  \end{equation}
  where $\odot$ is the Hadamard product.
\end{definition}

\pagebreak[3]

\subsection{GFT-IF Algorithm}
The basic idea of the GFT-IF algorithm is to use the graph convolution (\Cref{def:graph_convolution}) to compute the moving average of the signal in the inner loop of the IF algorithm.
The main issue that arises with this approach is that we have to define the convolution kernel $\hat w$ in the spectral~domain~(GFT~basis), which might not be trivial. In particular, by doing so, we do not have direct control over the support size of the window in the vertex domain, and it becomes difficult to link the window kernel choice to the number of extrema of the signal.
In the numerical results (\Cref{sec:numerical_results}), we will use a simple low-pass filter as a convolution kernel.
\Cref{algo:GFT-IF} shows the pseudo-code of the proposed GFT-IF algorithm.

We observe that, by defining $\hat s_m = U^T s_m$, the inner loop of the GFT-IF algorithm can be rewritten as
\begin{equation*}
  \hat s_{m+1} = \hat s_{m} - \hat w \odot  \hat s_{m}.
\end{equation*}
Therefore, the inner loop can be computed directly in the spectral domain, at a reduced computational cost of $O(|V|)$ flops per iteration, where $|V|$ denotes the number of vertices of the graph.
We will go over the computational cost of \Cref{algo:GFT-IF} in more details in \Cref{sec:comput_cost}.
\begin{algorithm}[H]
  \caption{\textbf{Graph Fourier Transform Iterative Filtering} GFT-IMF = GFT-IF$(s)$}\label{algo:GFT-IF}
  \begin{algorithmic}[1]
    \Require $G = (V,E,w)$ is an undirected weighted graph.
    \Require $L$ is the graph Laplacian of $G$.
    \Require $s:V\to\mathbb{R}$ is a signal on $G$.
    \State Compute the spectral decomposition of $L = UT U^T$.
    \State GFT-IMF = $\left\{\right\}$
    \While{the number of extrema of $s$ $\geq 2$}
    \State $s_0 = s$
    \State  select a convolution kernel $\hat w:V\to\mathbb{R}$ based on $s$
    \State $m=0$
    \While{the stopping criterion is not satisfied}
    \State  $s_{m+1} = s_{m} - s_{m} \circledast w = s_{m} - U(\hat w \odot U^T s_{m})$
    \State  $m = m+1$
    \EndWhile
    \State GFT-IMF = GFT-IMF$\,\cup\,\{ s_{m}\}$
    \State $s=s-s_{m}$
    \EndWhile
    \State GFT-IMF = GFT-IMF$\,\cup\,  \{ s\}$
  \end{algorithmic}
\end{algorithm}

\subsection{Inner loop convergence of the GFT-IF algorithm}
Based on \Cref{thm:general_IF_convergence}, we can give a sufficient condition on the convolution kernel $\hat w$ that guarantees the convergence of the inner loop of \Cref{algo:GFT-IF}, and we can also characterize the limit of the generated sequence $\left\{s_m\right\}$.

\begin{theorem}[GFT-IF convergence]
  \label{thm:GFT-IF_convergence}
  Let $\hat w:V\to\mathbb{R}$ be a convolution kernel such that
  \begin{equation*}
    \hat w_i \in [0,2) \quad \forall\ i,
  \end{equation*}
  then the inner loop of the GFT-IF algorithm (\Cref{algo:GFT-IF}) converges to
  \begin{equation*}
    s_\infty = w^{(\infty)} \circledast s_0,
  \end{equation*}
  where $w^{(\infty)} = \IGFT(\hat w^{(\infty)})$ and $\hat w^{(\infty)}$ is defined as
  \begin{equation*}
    \hat w^{(\infty)}_i =
    \begin{cases}
      1 & \text{if } \hat w_i = 0, \\
      0 & \text{if } 0 < \hat w_i < 2.
    \end{cases}
  \end{equation*}
\end{theorem}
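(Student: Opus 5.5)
The plan is to reduce the statement to \Cref{thm:general_IF_convergence} by writing the inner-loop update of \Cref{algo:GFT-IF} as $s_{m+1} = s_m - W s_m$ for a suitable matrix $W$. By \Cref{def:graph_convolution}, $s_m \circledast w = U(\hat w \odot U^T s_m) = U \diag(\hat w) U^T s_m$. We therefore set
\begin{equation*}
  W = U \diag(\hat w) U^T ,
\end{equation*}
where $U$ is the real orthonormal matrix from the spectral decomposition of the Laplacian $L$. The inner loop of \Cref{algo:GFT-IF} is then exactly the inner loop of \Cref{algo:general_IF} with this choice of $W$, on the index set $X=V$.

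Next we check the two hypotheses of \Cref{thm:general_IF_convergence}. First, $W$ is real symmetric, since $\diag(\hat w)$ is real diagonal and $U$ is real. Hence $W W^H = W^2 = W^H W$, so $W$ is normal. Second, the factorization $W = U\diag(\hat w)U^T$ is an orthogonal diagonalization, so the eigenvalues of $W$ are exactly $\lambda_i = \hat w_i$. The hypothesis $\hat w_i \in [0,2)$ splits into two cases. If $\hat w_i = 0$, then $\lambda_i = 0$. If $\hat w_i \in (0,2)$, then $|1-\lambda_i| < 1$. Condition \ref{item:eigen_general_IF_convergence} therefore holds.

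It remains to identify the limit. \Cref{thm:general_IF_convergence} gives $s_\infty = Q D Q^H s_0$ for any unitary $Q$ and diagonal $T$ with $W = QTQ^H$. The proof of that theorem only uses $W = QTQ^H$ with $T$ diagonal, which makes $(I-W)^m = Q(I-T)^mQ^H$. We may therefore take $Q=U$ and $T=\diag(\hat w)$. With this choice $D = \diag(\hat w^{(\infty)})$, with entries given exactly by the case split in the statement. Then
\begin{equation*}
  s_\infty = U \diag(\hat w^{(\infty)}) U^T s_0 = U\bigl(\hat w^{(\infty)} \odot U^T s_0\bigr).
\end{equation*}
Since $U^T w^{(\infty)} = U^T U \hat w^{(\infty)} = \hat w^{(\infty)}$, this equals $w^{(\infty)} \circledast s_0$. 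Graph convolution is symmetric because the Hadamard product is commutative, so the order of the factors does not matter.

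The only delicate point is this last one, namely the freedom in choosing $Q$. The theorem's statement seems to allow any Schur factor, whereas we need the specific one $Q=U$. I would handle this by recomputing the limit directly with $Q=U$, as above, rather than relying on uniqueness of the decomposition. Everything else is a routine verification.
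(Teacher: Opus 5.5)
Your proof is correct and follows the paper's own argument. Both set $W = U\diag(\hat w)U^T$, check normality and the eigenvalue condition of \Cref{thm:general_IF_convergence}, and read off $s_\infty = U\diag(\hat w^{(\infty)})U^T s_0 = w^{(\infty)}\circledast s_0$. Your concern about the choice of $Q$ is harmless but unnecessary: the theorem already allows any unitary diagonalizing $Q$, in particular $Q=U$, and the limit of the sequence is unique in any case.
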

\begin{proof}
  We observe that the GFT-IF algorithm (\Cref{algo:GFT-IF}) is a particular case of the general discrete IF algorithm (\Cref{algo:general_IF}) with $W$ defined as
  \begin{equation*}
    W = U \diag(\hat w)\, U^T.
  \end{equation*}
  where $U^T$ is the GFT matrix of the graph $G$.
  Therefore, the matrix $W$ satisfies hypothesis \ref{item:normal_general_IF_convergence} of \Cref{thm:general_IF_convergence}.
  Given that $\hat w_i \in [0,2)\ \forall i$, hypotheses \ref{item:eigen_general_IF_convergence} is also satisfied and, therefore, the inner loop of \Cref{algo:GFT-IF} converges to
  \begin{equation*}
    s_\infty = U \diag(\hat w^{(\infty)})\, U^T s_0 = w^{(\infty)} \circledast s_0,
  \end{equation*}
  with
  \begin{equation*}
    \hat w^{(\infty)}_i =
    \begin{cases}
      1 & \text{if } \hat w_i = 0, \\
      0 & \text{if } 0 < \hat w_i < 2.
    \end{cases}
  \end{equation*}
\end{proof}

In practical applications, it is useful to choose the convolution kernel $\hat w$ such that
\begin{equation*}
  \|\hat w\|_\infty = 1.
\end{equation*}
This ensures that the energy of the moving average is not increased compared to the energy of the signal, in the sense that it holds
\begin{equation*}
  \|s \circledast w\|_2 = \|\hat s \odot \hat w\|_2 \leq \|\hat s\|_2 \|\hat w\|_\infty = \|\hat s\|_2 = \|s\|_2.
\end{equation*}

\section{Distance Based Iterative Filtering (DB-IF)}
\label{sec:DB-IF}
In this section, we propose an algorithm that does not directly use the structure of the given graph $G$.
In fact, we only need a matrix $C$ such that the entry $C_{i,j}$ represents some kind of distance between the vertices $i$ and $j$ of the graph.
Of course, if we have a weighted graph, we
can construct a matrix $C$ based on the shortest
path distance.

For the numerical results in
\Cref{sec:numerical_results}, for each example, we used the distance defined on the spaces in which the graphs are embedded. As an example, in the case of 2D data (\Cref{sec:example_2}), we used the Euclidean distance.
Therefore, in a way, the graph structure is ignored by
the algorithm, and instead we use a more complex structure obtained by embedding
the graph in $\mathbb{R}^2$.
The pseudo-code of the proposed DB-IF method is shown in \Cref{algo:DB-IF}.

\begin{algorithm}[h!]
  \caption{\textbf{Distance Based Iterative Filtering} DB-IMF = DB-IF$(s)$}\label{algo:DB-IF}
  \begin{algorithmic}[1]
    \Require $G = (V,E)$ is a graph.
    \Require $s:V\to\mathbb{R}$ is a signal on $G$.
    \Require $C$ is a matrix representing the distance between the vertices of $G$.
    \Require $w:\mathbb{R}\to\mathbb{R}$ is a window function with $\supp(w)=[-1,1]$, and $w_l(x)=w(x/l)\quad \forall\ l>0$.
    \State DB-IMF = $\left\{\right\}$
    \While{the number of extrema of $s$ $\geq 2$}
    \State $s_0 = s$
    \State compute the filter window length $l$ for $s$
    \State compute the base window matrix $B_{i,j} = w_{l}(C_{i,j})$
    \State normalize the rows of $B$ to sum to 1
    \State $m=0$
    \While{the stopping criterion is not satisfied}
    \State  $s_{m+1} = s_{m} - W s_{m} = s_{m} - B B s_{m}$
    \State  $m = m+1$
    \EndWhile
    \State DB-IMF = DB-IMF$\,\cup\,  \{ s_{m}\}$
    \State $s=s-s_{m}$
    \EndWhile
    \State DB-IMF = DB-IMF$\,\cup\,  \{ s\}$
  \end{algorithmic}
\end{algorithm}

As we can see from \Cref{algo:DB-IF}, for each step of the algorithm, we need to multiply the signal by the window matrix $W$. This normally has a computational cost of $O(|V|^2)$. If the window size is small compared to the size of the graph, then the window matrix~$W$ will usually be sparse and, therefore, the cost of the multiplication can be reduced to~$O(|V|)$. We will review in more details the computational cost of \Cref{algo:DB-IF} in \Cref{sec:comput_cost}.

\subsection{Inner loop convergence of the DB-IF algorithm}
Based on \Cref{thm:general_IF_convergence}, we can show that the inner loop of the DB-IF algorithm converges, and we can also characterize the limit of the generated sequence $\left\{s_m\right\}$.

\begin{theorem}
  \label{thm:DB-IF_convergence}(DB-IF convergence)
  Let $W$ be defined as in \Cref{algo:DB-IF},
  then the inner loop of the DB-IF algorithm (\Cref{algo:DB-IF}) converges to
  \begin{equation*}
    s_\infty = Q D Q^T s_0,
  \end{equation*}
  where $Q$ is an orthogonal matrix such that $W = Q \Lambda Q^T$ with $\Lambda$ a diagonal matrix, and $D$ is a diagonal matrix defined as
  \begin{equation*}
    D_{i,i} =
    \begin{cases}
      1 & \text{if } \lambda_i = \Lambda_{i,i} = 0, \\
      0 & \text{otherwise}.
    \end{cases}
  \end{equation*}
\end{theorem}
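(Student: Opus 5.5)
The plan is to reduce the statement to \Cref{thm:general_IF_convergence} by checking its two hypotheses for $W = BB$, where $B$ is the row-normalized base window matrix built in \Cref{algo:DB-IF}. The argument follows the remark after \Cref{thm:general_IF_convergence}: if $B$ is symmetric and stochastic, then $W$ is symmetric, stochastic and positive semi-definite, so its eigenvalues lie in $[0,1]$.

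\textbf{Step 1: properties of $B$.} Every entry $B_{i,j}=w_l(C_{i,j})$ is non-negative because $w\geq 0$. After normalization each row sums to one, so $B$ is row-stochastic. I would take $C$ symmetric (a distance matrix) and $B$ symmetric, which is the setting of the remark following \Cref{thm:general_IF_convergence}.

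\textbf{Step 2: hypothesis \ref{item:normal_general_IF_convergence}.} Symmetry of $B$ gives $W^T = B^TB^T = BB = W$, so $W$ is real symmetric and hence normal. By the spectral theorem, $W = Q\Lambda Q^T$ with $Q$ real orthogonal and $\Lambda$ real diagonal. This is the decomposition named in the statement, with $Q^H=Q^T$.

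\textbf{Step 3: hypothesis \ref{item:eigen_general_IF_convergence}.} For every $x$ we have $x^TWx=\|Bx\|_2^2\ge 0$, so each $\lambda_i\ge 0$. Since $W$ is a product of stochastic matrices it is stochastic, and Gershgorin's theorem gives $|\lambda_i|\le 1$. Hence $\lambda_i\in[0,1]$. The case $\lambda_i=0$ is allowed, and $\lambda_i\in(0,1]$ gives $|1-\lambda_i|<1$, so hypothesis \ref{item:eigen_general_IF_convergence} holds. Applying \Cref{thm:general_IF_convergence} yields $s_\infty = QDQ^Ts_0$. Its case split is exactly ``$\lambda_i=0$'' versus ``otherwise'', because every nonzero eigenvalue falls in the second case.

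\textbf{Main obstacle: symmetry of $B$.} Row normalization generally destroys symmetry: $B=\Delta^{-1}K$, where $K_{i,j}=w_l(C_{i,j})$ is symmetric and $\Delta=\diag(K\mathbf{1})$. Symmetry survives only when all row sums of $K$ are equal. I would first make the symmetry assumption explicit, either by assuming equal row sums or by replacing the normalization with a symmetric stochastic one such as Sinkhorn scaling.

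As a fallback for the non-symmetric case, set $S=\Delta^{-1/2}K\Delta^{-1/2}$, which is symmetric and satisfies $B=\Delta^{-1/2}S\Delta^{1/2}$. Then $W=\Delta^{-1/2}S^2\Delta^{1/2}$. Since $B$ is stochastic, the eigenvalues of $S$ are real and lie in $[-1,1]$, so the eigenvalues of $S^2$ lie in $[0,1]$. Diagonalizing $S^2=Q\Lambda Q^T$ orthogonally, the computation in the proof of \Cref{thm:general_IF_convergence} gives $(I-W)^m=\Delta^{-1/2}Q(I-\Lambda)^mQ^T\Delta^{1/2}$. Hence the iteration still converges, to $s_\infty=\Delta^{-1/2}QDQ^T\Delta^{1/2}s_0$. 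This is the stated formula up to the similarity by $\Delta^{1/2}$, and it coincides with the stated formula when $\Delta$ is a multiple of the identity.
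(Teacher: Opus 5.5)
Your Steps 1--3 are exactly the paper's proof. The paper asserts that $W=BB$ with $B$ symmetric and stochastic, invokes the remark after \Cref{thm:general_IF_convergence} to get $\lambda_i\in[0,1]$, applies that theorem, and notes that $Q$ can be taken orthogonal because $W$ is real symmetric.

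Where you go beyond the paper is in questioning whether $B$ is symmetric, and you are right to. The paper never justifies this. The row normalization in \Cref{algo:DB-IF}, written out in the cost section as $B_{i,j}=w(C_{i,j}/l)/\sum_k w(C_{i,k}/l)$, gives $B=\Delta^{-1}K$. This is symmetric only when all row sums of $K$ coincide. The paper's own discussion of Example~1, where windows in dense regions have smaller peaks than windows in sparse regions, shows that the row sums do not coincide on the non-uniform grids the method targets. In that case $W$ is in general not symmetric. Since its eigenvalues are real, it is then not normal either. So \Cref{thm:general_IF_convergence} does not apply, and the orthogonal $Q$ with $W=Q\Lambda Q^T$ that the statement refers to does not exist.

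Your fallback is the correct repair. Writing $B=\Delta^{-1/2}S\Delta^{1/2}$ with $S$ symmetric sidesteps the normality hypothesis. It proves that the inner loop converges for the algorithm as actually written. It also identifies the limit as $\Delta^{-1/2}QDQ^T\Delta^{1/2}s_0=PDP^{-1}s_0$, where $P=\Delta^{-1/2}Q$ diagonalizes $W$. This limit is the oblique, not orthogonal, projection onto $\ker W=\ker K$ along the range of $W$. When $W$ is nonsingular both formulas give $s_\infty=0$, so the discrepancy only matters when $K$ is singular.

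The two routes buy different things.
\begin{itemize}
\item The paper's route is a one-line corollary of \Cref{thm:general_IF_convergence}, but it holds only under an unstated symmetric normalization, such as equal row sums or a Sinkhorn-type scaling as you suggest.
\item Your route covers the general case, at the price of the modified limit formula.
\end{itemize}
Your argument also shows that the normality hypothesis in \Cref{thm:general_IF_convergence} could be weakened to diagonalizability.
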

\begin{proof}
  From \Cref{algo:DB-IF}, we know that
  \begin{equation*}
    W = B B,
  \end{equation*}
  where $B$ is a symmetric and stochastic matrix.
  As seen in \Cref{sec:general_IF}, this implies that $W$ is symmetric and all the eigenvalues of $W$ are contained in the interval $[0,1]$. Therefore, all the hypotheses of \Cref{thm:general_IF_convergence} hold and the thesis follows.
  Furthermore, since $W$ is real and symmetric, we can choose $Q$ not only unitary but also orthogonal, i.e., $Q^T = Q^H = Q^{-1}$.
\end{proof}

\section{Computational cost analysis}
\label{sec:comput_cost}
In this section, we analyze the computational cost of the proposed GFT-IF and DB-IF algorithms.
As a baseline, we also compare those methods with the classical FIF algorithm.
We examine the key steps for which the three algorithms differ: precomputation, window computation, sifting iteration, and, for GFT-IF and FIF, transformation from the spectral domain to the vertex domain.

In this section, we use $n$ to denote the number of vertices of the graph, i.e., $n=|V|$.

\subsection{Precomputation}
\label{sec:precomp}
Precomputation includes tasks that can be performed once and reused for all signals defined on the same graph.

For GFT-IF, this consists of computing the full eigendecomposition of the graph Laplacian, which is needed to compute the GFT and IGFT.
This step has a computational cost of $O(n^3)$.
To reduce the computational cost of this step, different strategies could be used, such as computing only a subset of the eigenvectors, or using approximate methods for eigendecomposition (see, \cite{Marini2025, ALEOTTI202443}).

For DB-IF, this consists of computing the distance matrix $C$. The cost of this step depends on how the distance matrix $C$ is obtained.
If it is obtained by embedding the graph in a space and using the corresponding distance, the cost is $O(n^2)$.
If $C$ is instead obtained by computing the shortest-path distances, it can be done using the Floyd-Warshall algorithm with a
computational cost of $O(n^3)$ or, if the graph is sparse, using Dijkstra's
algorithm with a Fibonacci heap with a computational cost of ${O[n(|E| +
n\log n)]=O(n^2 \log n)}$, see \cite{Floyd–Warshall,Dijkstra-Fibonacci}.

Note that the classical FIF method requires no precomputation since the DFT matrix is fixed.

\subsection{Window computation}
In this section, we analyze the cost of computing the window used in the sifting process for each method.

For GFT-IF, this consists of selecting a convolution kernel $\hat w$ in the spectral domain (line 5 of \Cref{algo:GFT-IF}).
For the numerical results in \Cref{sec:numerical_results}, we simply choose a low-pass filter as a convolution kernel computed as
\begin{equation*}
  \left(\hat w_l\right)_i = \hann(\lambda_i/l),
\end{equation*}
where $\hann$ is the Hann window function and $\lambda_i$ is the $i$-th smallest eigenvalue of the graph Laplacian.
This step has a computational cost of $O(n)$.

For DB-IF, this consists of computing the base window matrix $B$ (lines 5-6 of \Cref{algo:DB-IF}) as
\begin{equation*}
  B_{i,j} = \frac{w(C_{i,j}/l)}{\sum_{k=1}^n w(C_{i,k}/l)},
\end{equation*}
where $w$ is the window function and $l$ is the window length. This step has a computational cost of $O(n^2)$.

For the classical FIF method, this consists of computing the base window in the vertex domain
\begin{equation*}
  (v_l)_i = v(i/l),
\end{equation*}
where $v$ is a window function and $l$ is the window length. This step has a computational cost of $O(n)$.
Furthermore, since the sifting process is performed in the spectral domain, it is also necessary to compute the \emph{Real Fast Fourier Transform } (RFFT) of the base window $\hat v = \RFFT(v)$, which has a computational cost of $O(n \log n)$.
Finally, we compute the window kernel
\begin{equation*}
  \hat w = \hat v \odot \hat v,
\end{equation*}
which has a computational cost of $O(n)$.
In total, the window computation for FIF has a computational cost of $O(n \log n)$.

\subsection{Sifting process step}
As seen in \Cref{sec:general_IF}, the main difference between the three methods consists in the operator $W$ used in the sifting process.

For GFT-IF and FIF, the operator $W$ is defined as a convolution operator and, for efficiency reasons, the sifting process is performed in the spectral domain.
This reduces the product $W s_m$ to an element-wise product $\hat w \odot \hat s_m$, which has a computational cost of $O(n)$.

For DB-IF, the operator $W$ is defined as a matrix product with the window matrix $W = B B$. This has a computational cost of $O(n^2)$.
If the support size $l$ of the window is small compared to the size of the graph (i.e., $l \ll \max_{i,j} C_{i,j}$), and the graph is sparse, then the window matrix $W$ will be sparse and the computational cost can be reduced to $O(n)$.

\subsection{Transformation from spectral domain to vertex domain}
Since the sifting process for GFT-IF and FIF is performed in the spectral domain, at the end of the inner loop, it is necessary to transform the obtained IMF from the spectral domain to the vertex domain.
For GFT-IF, this consists of computing the IGFT of the obtained IMF, which has a computational cost of $O(n^2)$.
For FIF, this consists of computing the Inverse Real Fast Fourier Transform (IRFFT) of the obtained IMF, which has a computational cost of $O(n \log n)$.

\subsection{Summary of computational cost}
\label{sec:summary_of_comp_cost}
To summarize, the precomputation cost for the proposed methods is:
\begin{enumerate}[label= \arabic*$^p$)]
  \item GFT-IF: $O(n^3)$, \label{item:pre_GFT}
  \item DB-IF: $O(n^2)$. \label{item:pre_DB}
\end{enumerate}
We note that GFT-IF has the highest cost due to eigendecomposition of the graph Laplacian,  which could be approximated cheaply using different strategies (see, \cite{Marini2025, ALEOTTI202443}).

Assuming that the number of iterations needed for convergence of the inner loop is a constant $m$, and that the extracted IMFs are $k$ for each method, the total cost of the three methods excluding precomputation is:
\begin{enumerate}[label= \arabic*)]
  \item GFT-IF: $O(k m n + k n^2)$, \label{item:cost_GFT}
  \item DB-IF: $O(k m n^2)$, \label{item:cost_DB}
  \item FIF: $O(k n \log n + k m n)$. \label{item:cost_FIF}
\end{enumerate}
We observe that, in this case, DB-IF is the most expensive method due to the cost of the matrix product with the window matrix $W$ in the sifting process.
GFT-IF has a lower cost than DB-IF due to the efficiency of the convolution operator in the spectral domain, but it is more expensive than FIF due to the cost of the IGFT at the end of each inner loop.
Despite higher computational cost, GFT-IF and DB-IF have the significant advantage of working on arbitrary graphs, including irregular domains, while FIF is limited to regular grids.

\section{Numerical Results}
\label{sec:numerical_results}

In this section, we present numerical results for the proposed GFT-IF and DB-IF algorithms. We have tested both algorithms on different graphs, such as a random ring graph (\Cref{sec:example_1}) and a triangulation of random points on a 2D square (\Cref{sec:example_2}). Furthermore, we also tested the proposed methods on real-world data such as magnitude time series of the global seismicity (\Cref{sec:example_3}) and the \emph{Total Electron Content} (TEC) of the Earth's ionosphere measured over Italy (\Cref{sec:example_4}).
In the 1D case (\Cref{sec:example_1}), we also compare the results of our methods with the ones obtained with the classical FIF algorithm on the same signal sampled on an equispaced grid of points.
All the tests were performed on a PC running Fedora 43 with an AMD Ryzen 9 3900X 12-core Processor and 32GB of RAM.
All the methods were implemented in Python 3.13 using the NumPy and SciPy libraries.
The codes used for the numerical results are available in a public git repository \footnote{\url{https://github.com/Giuseppe499/GraphIF}}

For all the examples, the convolution kernel $\hat w$ for the GFT-IF algorithm was defined in the spectral domain as
\begin{equation*}
  \left(\hat w_l\right)_i = \hann(\lambda_i/l),
\end{equation*}
where $\hann$ is the Hann window function, $l>0$ is the kernel support size, and $\lambda_i$ is the $i$-th smallest
eigenvalue of the graph Laplacian $L$.

For the DB-IF algorithm, the distance matrix $C$ was defined by considering the vertices of the graph as points of the natural space in which they are defined and by using the corresponding distance. In particular, in the case of a graph representing a grid of points on the interval $[0, 2\pi)$, we have used the distance in radians between the points on the circle; in the case of a graph representing a triangulation of random points on a 2D square, we have used the Euclidean distance.
Then, as described in \Cref{algo:DB-IF}, the window matrix $W$ was defined as the self-product of a base window matrix $B$ defined as
\begin{equation*}
  B_{i,j} = \frac{w_l(C_{i,j})}{\sum_{k=1}^n w_l(C_{i,k})},
\end{equation*}
where $w_l$ was defined as
\begin{equation*}
  w_l(x) = \hann(x/l).
\end{equation*}
In the 1D case, the window size $l$ was defined as
\begin{equation}
  \label{eq:DB_IF_window_length}
  l = 2\nu\frac{b-a}{k},
\end{equation}
where $[a,b]$ is the interval on which the signal is defined, $k$ is the number of extrema of the signal, and $\nu$ is a tuning parameter set to $\nu = 1.6$ for all the examples.
This is similar to what is done in the classical FIF algorithm, where the window size is defined as
\begin{equation*}
  l = 2 \left\lfloor\nu\frac{n}{k}\right\rfloor,
\end{equation*}
where $n$ is the number of samples of the signal, $k$ is the number of extrema of the signal, and $\nu$ is usually set to $1.6$ \cite{ALIF}.

\subsection{Example 1: 1D signal on a random ring graph}
\label{sec:example_1}
In this example, we have tested the GFT-IF and the DB-IF algorithms on a 1D signal on a grid of random points uniformly distributed on the interval $[0, 2\pi)$.
The graph is constructed as a ring graph where each vertex is connected to its 2 nearest neighbors on the left and on the right. The edges are weighted by $1/d(i,j)$ where $d(i,j)$ is the distance on the circle between the vertices $i$ and $j$.
For comparison, we also show the results obtained with the classical FIF algorithm on the same signal sampled on an equispaced grid of points on the interval $[0, 2\pi)$.

The signal consists of the sum of two sinusoidal functions with nonstationary frequencies
\begin{equation*}
  \begin{split}
    b_0(x) &= \frac{1}{2} \sin\left[\left(30 - \frac{5}{2 \pi} x\right) x + 1\right],\\
    b_1(x) &= \sin\left[\left(2 + \frac{2}{2 \pi} x\right) x - 1.5\right],\\
  \end{split}
\end{equation*}
and is shown in \Cref{fig:ex_1signals}. In the same figure, the two components of the signal $b_0$ and $b_1$ are also shown.
\Cref{fig:ex_1GFT_kernel} shows the GFT of
the signal $s$ compared with the convolution kernel used in the GFT-IF
algorithm.
Similarly, \Cref{fig:ex_1FIF_kernel} shows the \emph{Real Fast Fourier Transform} (RFFT) of
the signal $s$ compared with the convolution kernel used for the classical FIF method.
We observe that, in both cases, the convolution kernel is a low-pass filter that preserves the low frequencies of the signal and attenuates or completely removes the high frequencies.

\Cref{fig:ex_1windows} shows some examples of the window functions used in the GFT-IF, DB-IF, and FIF algorithms.
For the GFT-IF algorithm (\Cref{fig:ex_1GFT_windows}) and for the FIF algorithm (\Cref{fig:ex_1FIF_windows}), the windows centered at different vertices of the graph/grid are obtained by taking the convolution of $w$ with the delta signals centered at the corresponding vertices.
For the DB-IF algorithm (\Cref{fig:ex_1DB_windows}), the windows are the rows of the window matrix $W$ used in the algorithm.
We note that for GFT-IF, since the windows are defined in the spectral domain, there are no constraints on their shape in the vertex domain, and, therefore, they have some negative values and do not have local support.
For the DB-IF algorithm, since the window matrix $W$ is stochastic, the windows sum to 1. Moreover, since the support size is the same for each window, windows centered in areas where points are more dense have a small maximum value, while, windows centered in areas where points are more sparse have a large maximum value. The same effect can also be observed for the windows of GFT-IF.

\Cref{fig:ex_1GFT_results,fig:ex_1DB_results,fig:ex_1FIF_results} show the results of the GFT-IF, DB-IF, and FIF algorithms, respectively.
We observe that all three algorithms are able to correctly decompose the signal into its two components, but the error of the decomposition is slightly smaller for FIF compared to GFT-IF and DB-IF. Furthermore, the error of FIF is concentrated on the boundary of the domain, probably due to boundary effects, while the error of GFT-IF and DB-IF is more uniformly distributed along the signal.
The slightly better performance of FIF compared to GFT-IF and DB-IF can be explained by the fact that FIF is working on a regular grid, while GFT-IF and DB-IF are working on a random grid with non-uniformly distributed points.

To conclude on this example, we analyze the computational efficiency of the three algorithms. Similarly to \Cref{sec:summary_of_comp_cost}, we separated the precomputation cost from the total cost of the methods.
\Cref{tab:ex_1CPU_time_precomputation} shows the CPU time for the precomputation of GFT-IF (see \ref{item:pre_GFT}) and DB-IF (see \ref{item:pre_DB}).
We note that the precomputation of GFT-IF is, as expected, significantly more expensive than that of DB-IF due to the cost of the eigendecomposition of the graph Laplacian.

Setting the number of iterations of the inner loop to $m=10$ and the number of extracted IMFs to $k=10$, the total CPU time of the three methods, excluding precomputation, is shown in \Cref{tab:ex_1CPU_time_total_ex_precomputation}. The number of extracted IMFs is set to $k=10$, exceeding the two IMFs extracted in this example, to give a more general and realistic estimate of the computational cost of the methods.

We observe that, in this case, DB-IF is the most expensive method due to the cost of the matrix product with the window matrix $W$ in the sifting process, while GFT-IF has a lower cost than DB-IF due to the efficiency of the convolution operator in the spectral domain.
However, GFT-IF is more expensive than FIF due to the cost of the IGFT at the end of each inner loop, which is more expensive than the cost of the IRFFT used in FIF.
Despite higher computational cost, GFT-IF and DB-IF have the advantage of working on a non-uniform grid, while FIF can only be used on the equispaced grid.

\begin{figure}[h]
  \centering
  \makebox[\textwidth][c]{
    \begin{subfigure}{0.33\textwidth}
      \centering
      \includegraphics[width=\textwidth]{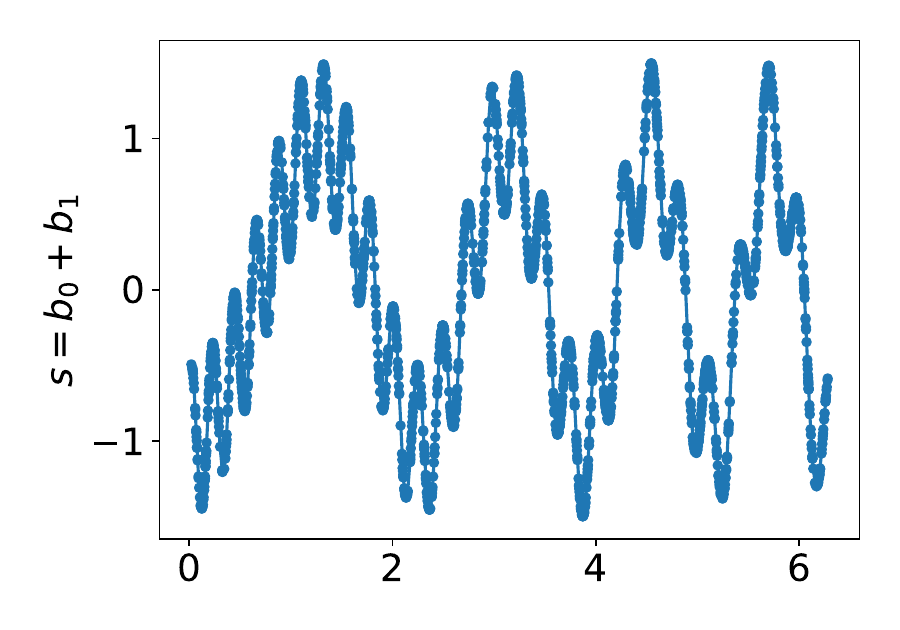}
      \subcaption{\label{fig:ex_1signal}}
    \end{subfigure}
    \begin{subfigure}{0.33\textwidth}
      \centering
      \includegraphics[width=\textwidth]{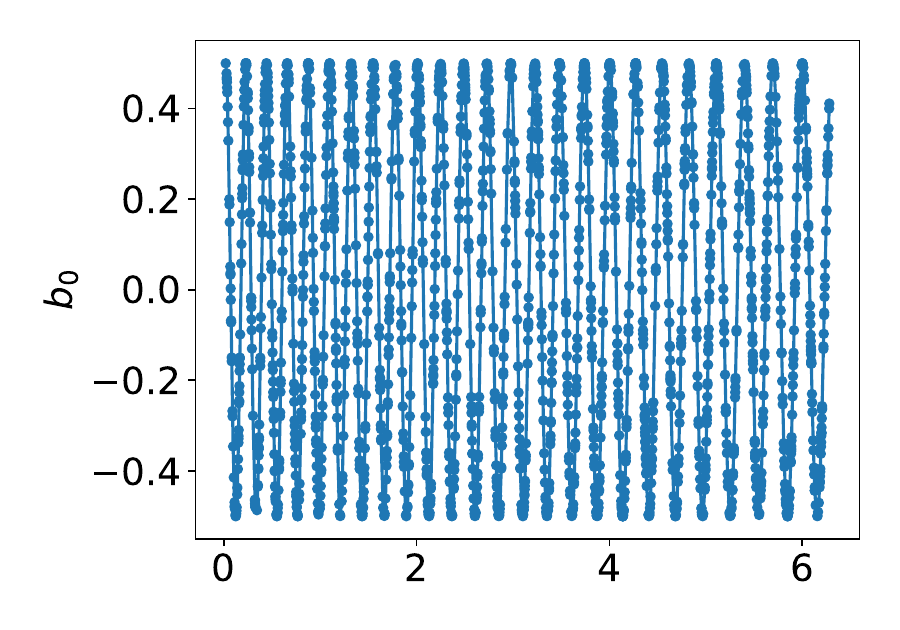}
      \subcaption{\label{fig:ex_1base_signal_0}}
    \end{subfigure}
    \begin{subfigure}{0.33\textwidth}
      \centering
      \includegraphics[width=\textwidth]{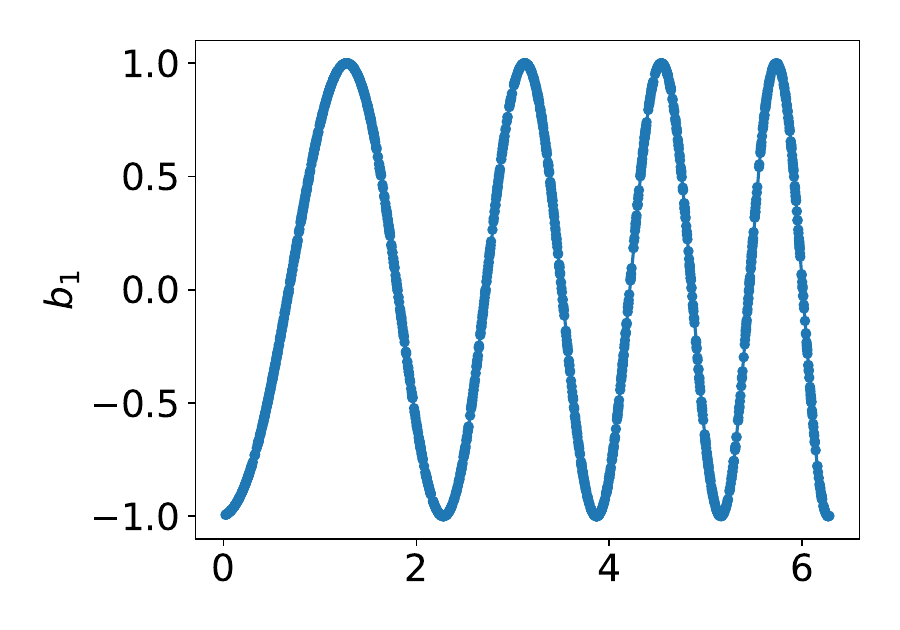}
      \subcaption{\label{fig:ex_1base_signal_1}}
    \end{subfigure}
  }
  \makebox[\textwidth][c]{
    \begin{subfigure}{0.33\textwidth}
      \centering
      \includegraphics[width=\textwidth]{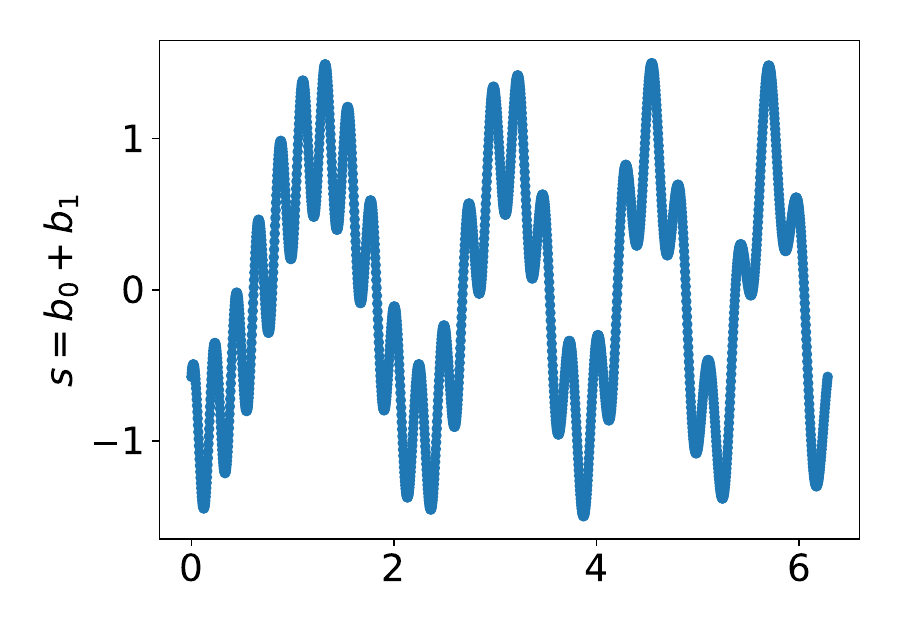}
      \subcaption{\label{fig:ex_1FIF_signal}}
    \end{subfigure}
    \begin{subfigure}{0.33\textwidth}
      \centering
      \includegraphics[width=\textwidth]{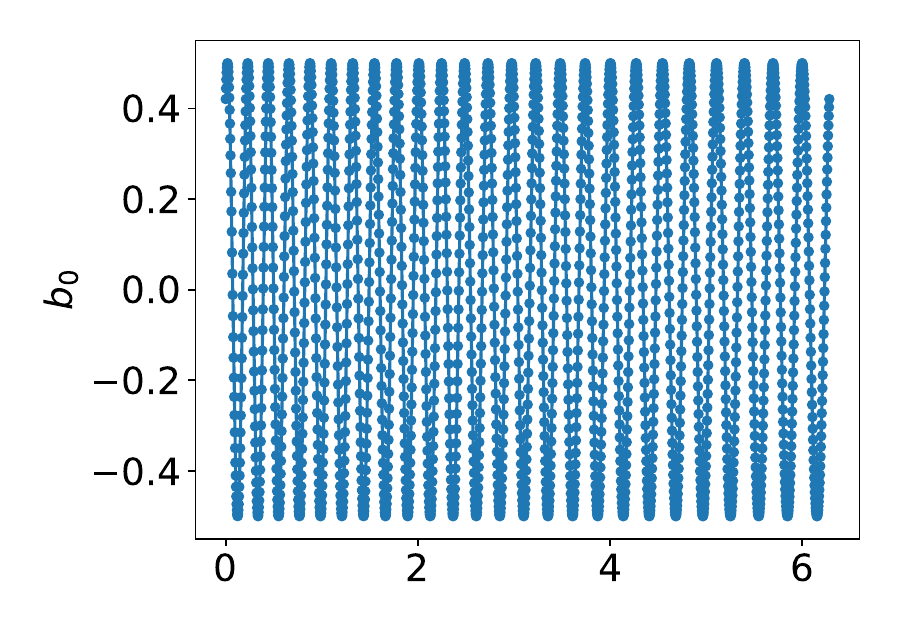}
      \subcaption{\label{fig:ex_1FIF_base_signal_0}}
    \end{subfigure}
    \begin{subfigure}{0.33\textwidth}
      \centering
      \includegraphics[width=\textwidth]{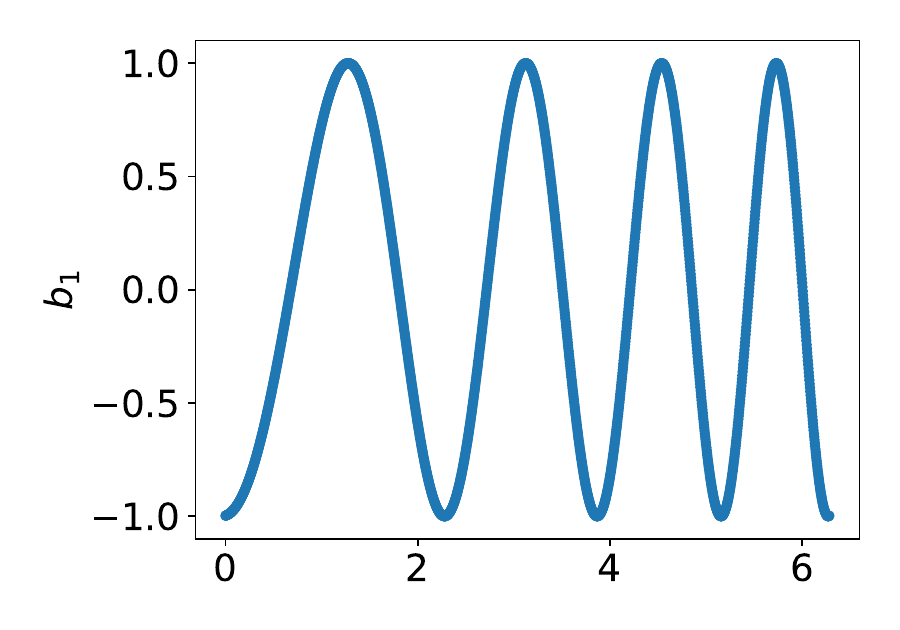}
      \subcaption{\label{fig:ex_1FIF_base_signal_1}}
    \end{subfigure}
  }
  \caption{In the top row, the signals sampled on a graph representing a grid of random points on the interval $[0, 2\pi)$ used for the GFT-IF and DB-IF algorithms.
    In particular, the signal $s$ (\subref{fig:ex_1signal}) is obtained as the sum of a high frequency component (\subref{fig:ex_1base_signal_0}) and a low frequency one (\subref{fig:ex_1base_signal_1}).
    In the bottom row, the same signals are sampled on an equispaced grid used for the classical IF algorithm.
  \label{fig:ex_1signals}}
\end{figure}

\begin{figure}[h]
  \makebox[\textwidth][c]{
    \begin{subfigure}{0.48\textwidth}
      \centering
      \includegraphics[width=\textwidth]{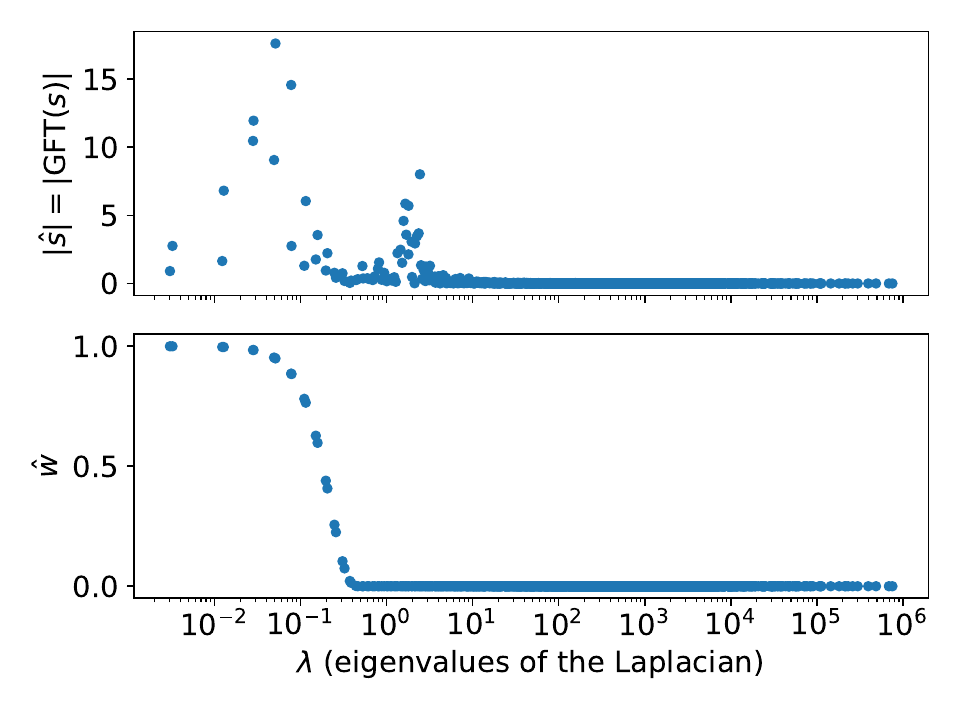}
      \caption{GFT of the signal $s$ (see \Cref{fig:ex_1signal}) compared with the convolution kernel used in the GFT-IF algorithm.\label{fig:ex_1GFT_kernel}}
    \end{subfigure}
    \ \
    \begin{subfigure}{0.48\textwidth}
      \centering
      \includegraphics[width=\textwidth]{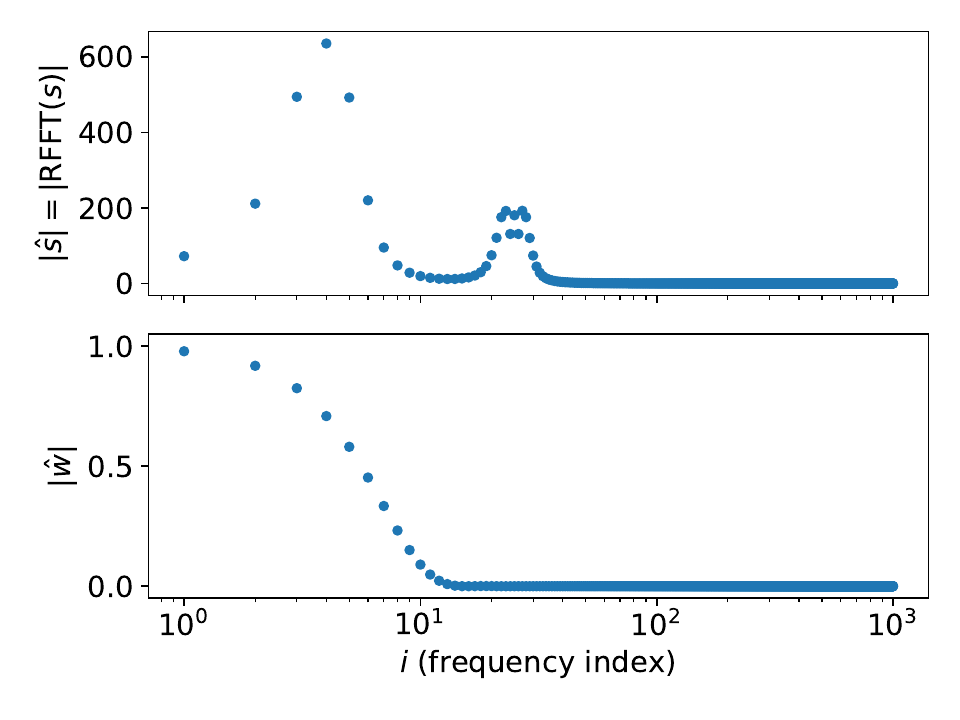}
      \caption{RFFT of the signal $s$ (see \Cref{fig:ex_1FIF_signal}) compared with the convolution kernel used in the FIF algorithm.\label{fig:ex_1FIF_kernel}}
    \end{subfigure}
  }
  \caption{\label{fig:ex_1kernels}}
\end{figure}

\begin{figure}[h]
  \makebox[\textwidth][c]{
    \begin{subfigure}{0.33\textwidth}
      \centering
      \includegraphics[width=\textwidth]{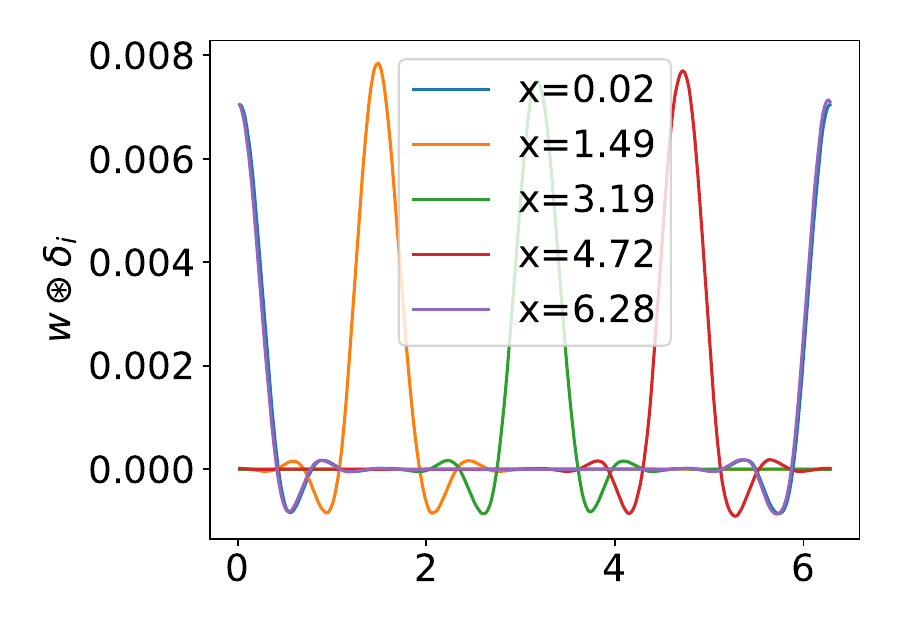}
      \caption{GFT-IF windows.\label{fig:ex_1GFT_windows}}
    \end{subfigure}
    \begin{subfigure}{0.33\textwidth}
      \centering
      \includegraphics[width=\textwidth]{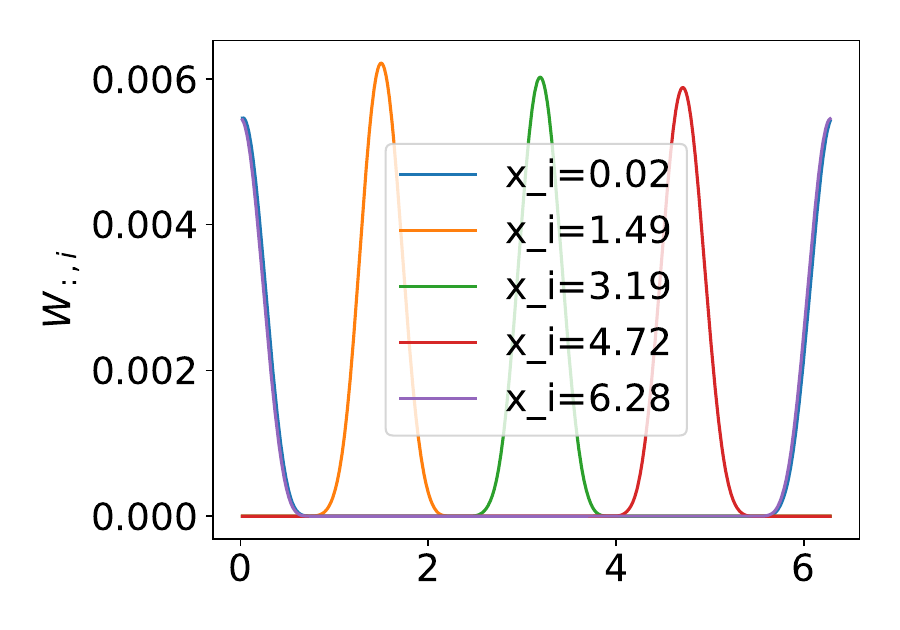}
      \caption{DB-IF windows.\label{fig:ex_1DB_windows}}
    \end{subfigure}
    \begin{subfigure}{0.33\textwidth}
      \centering
      \includegraphics[width=\textwidth]{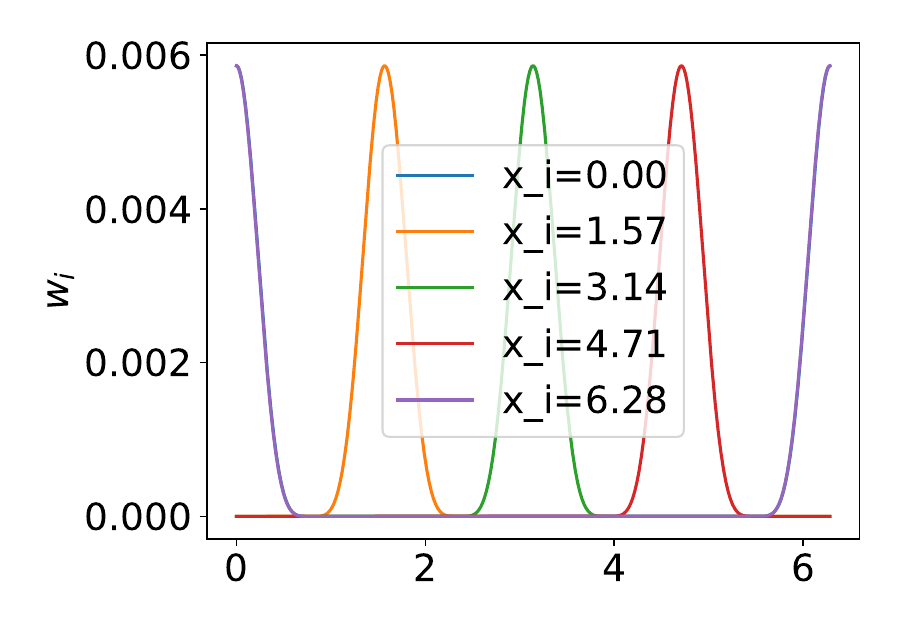}
      \caption{FIF windows.\label{fig:ex_1FIF_windows}}
    \end{subfigure}
  }
  \caption{Examples of the window functions used in the GFT-IF (\subref{fig:ex_1GFT_windows}), DB-IF (\subref{fig:ex_1DB_windows}) and FIF (\subref{fig:ex_1FIF_windows}) algorithms. In the case of the GFT-IF and FIF algorithms, those windows are obtained by convolving the kernel $w$ with delta signals centered at different vertices/points of the graph/domain. In the case of the DB-IF algorithm, those windows correspond to rows of the window matrix $W$.
  \label{fig:ex_1windows}}
\end{figure}

\begin{figure}[h]
  \centering
  \makebox[\textwidth][c]{
    \begin{subfigure}{0.33\textwidth}
      \centering
      \includegraphics[width=\textwidth]{./example_1_base_signal_0.pdf}
      \caption{\label{fig:ex_1comp_signal_0}}
    \end{subfigure}
    \begin{subfigure}{0.33\textwidth}
      \centering
      \includegraphics[width=\textwidth]{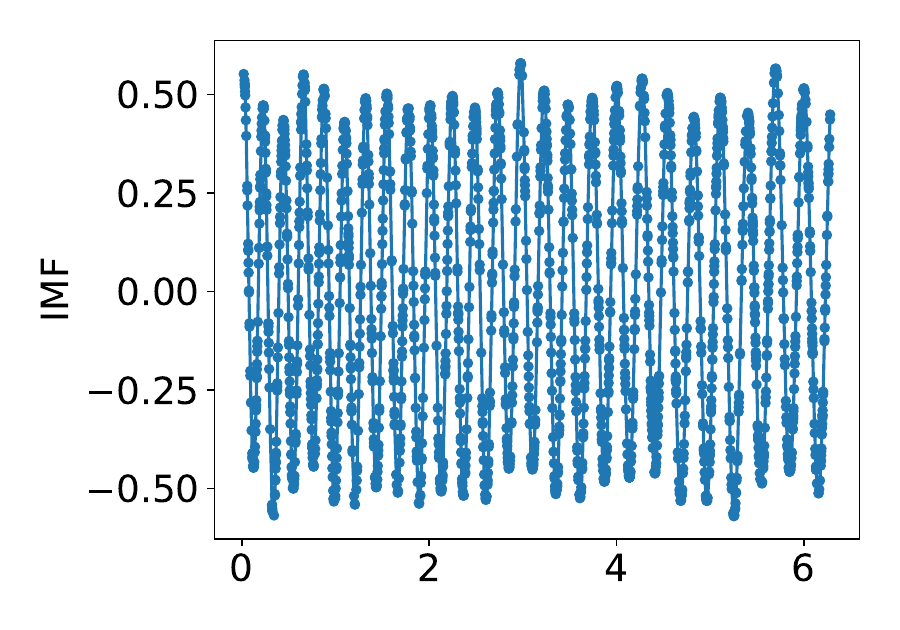}
      \caption{\label{fig:ex_1IMF_0}}
    \end{subfigure}
    \begin{subfigure}{0.33\textwidth}
      \centering
      \includegraphics[width=\textwidth]{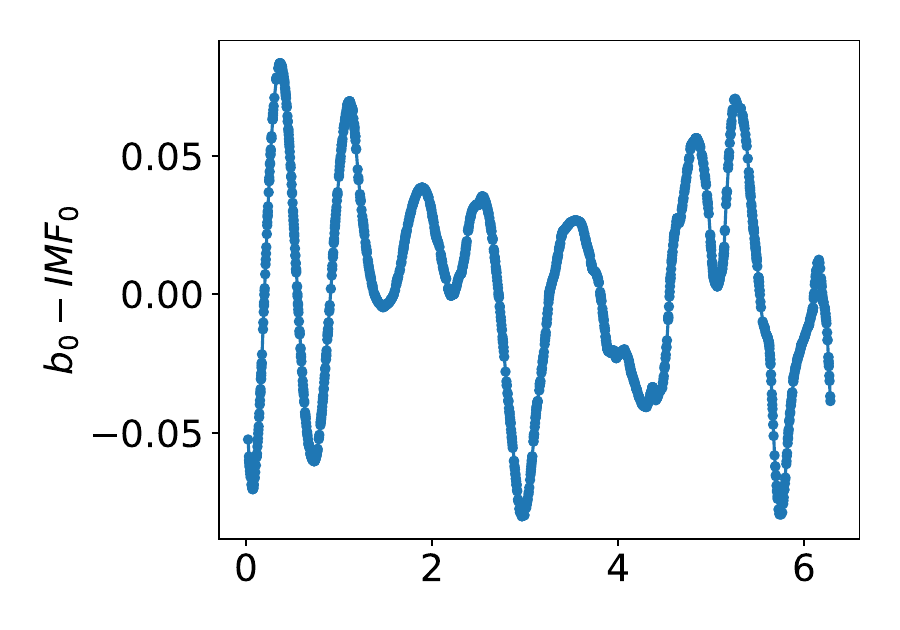}
      \caption{\label{fig:ex_1error_0}}
    \end{subfigure}
  }
  \makebox[\textwidth][c]{
    \begin{subfigure}{0.33\textwidth}
      \centering
      \includegraphics[width=\textwidth]{./example_1_base_signal_1.pdf}
      \caption{\label{fig:ex_1comp_signal_1}}
    \end{subfigure}
    \begin{subfigure}{0.33\textwidth}
      \centering
      \includegraphics[width=\textwidth]{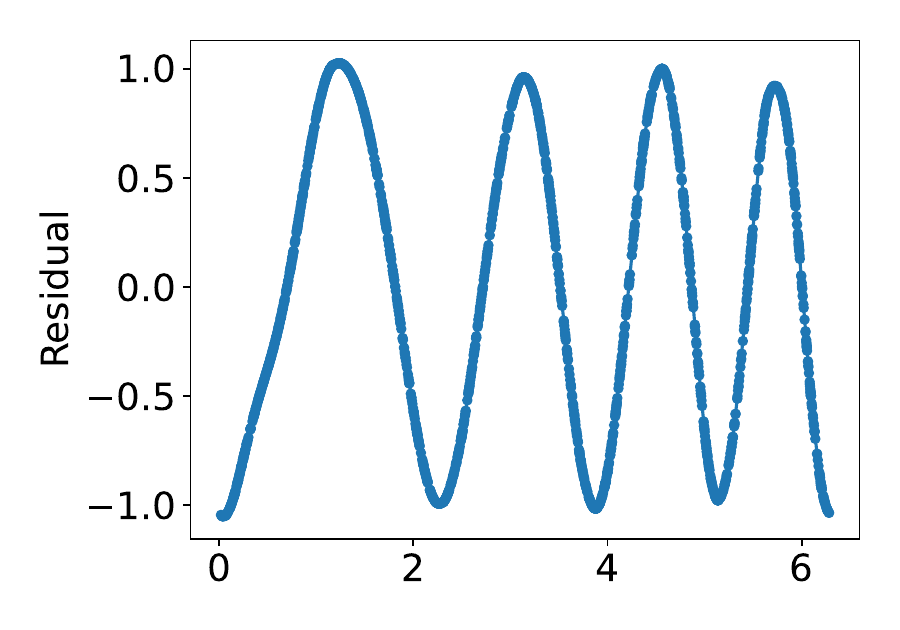}
      \caption{\label{fig:ex_1IMF_1}}
    \end{subfigure}
    \begin{subfigure}{0.33\textwidth}
      \centering
      \includegraphics[width=\textwidth]{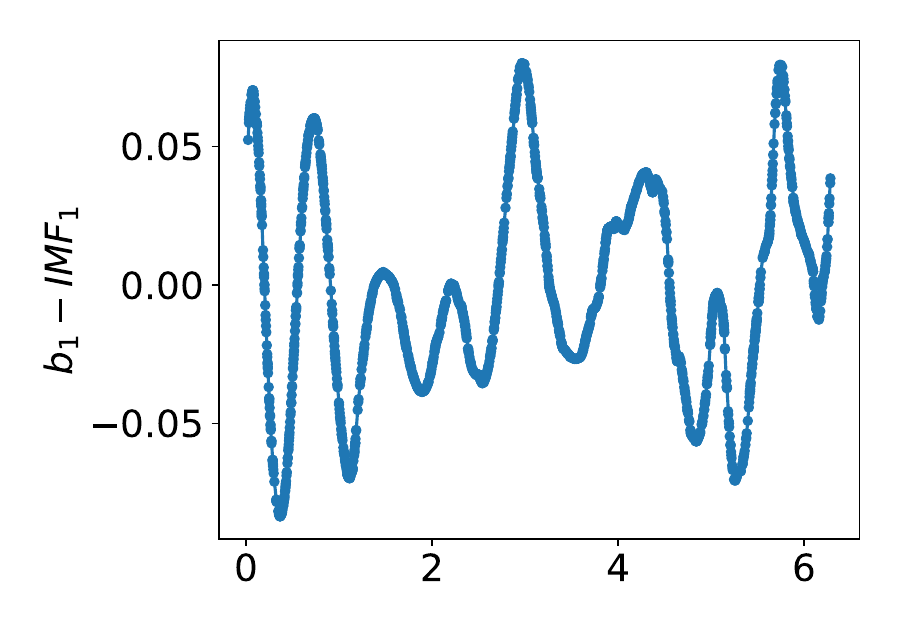}
      \caption{\label{fig:ex_1error_1}}
    \end{subfigure}
  }
  \caption{Results of the GFT-IF algorithm applied to the signal $s$ (shown in~\Cref{fig:ex_1signal}). The figure~(\subref{fig:ex_1IMF_0}) shows the first IMF obtained
    with the GFT-IF algorithm alongside the expected result
    (\subref{fig:ex_1comp_signal_0}) and their difference~(\subref{fig:ex_1error_0}). Similarly, the figure~(\subref{fig:ex_1IMF_1}) shows
  the residual signal compared to the expected result~(\subref{fig:ex_1comp_signal_1}) and their difference~(\subref{fig:ex_1error_1}).\label{fig:ex_1GFT_results}}
\end{figure}

\begin{figure}[h]
  \centering
  \makebox[\textwidth][c]{
    \begin{subfigure}{0.33\textwidth}
      \centering
      \includegraphics[width=\textwidth]{./example_1_base_signal_0.pdf}
      \caption{\label{fig:ex_1comp_DB_signal_0}}
    \end{subfigure}
    \begin{subfigure}{0.33\textwidth}
      \centering
      \includegraphics[width=\textwidth]{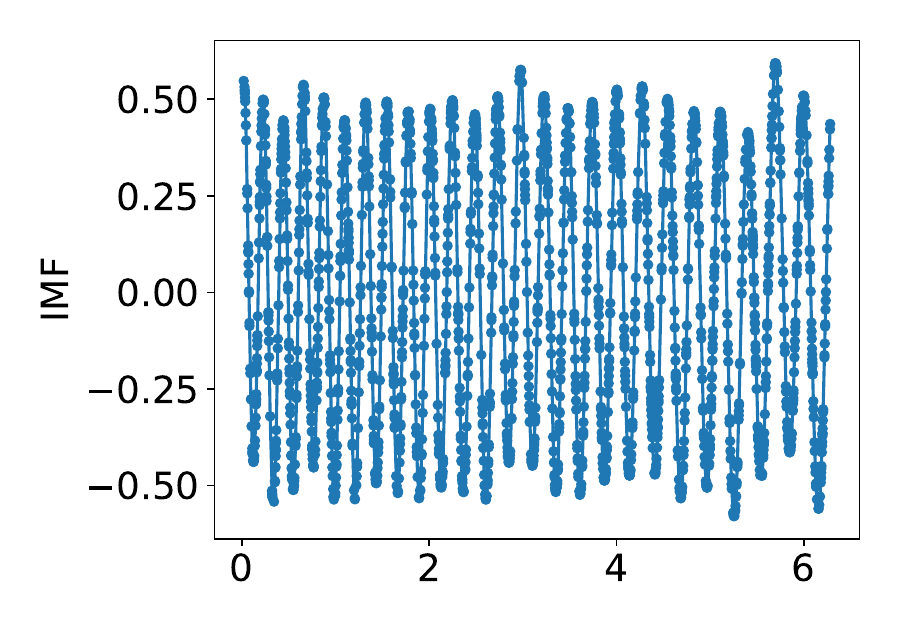}
      \caption{\label{fig:ex_1DB_IMF_0}}
    \end{subfigure}
    \begin{subfigure}{0.33\textwidth}
      \centering
      \includegraphics[width=\textwidth]{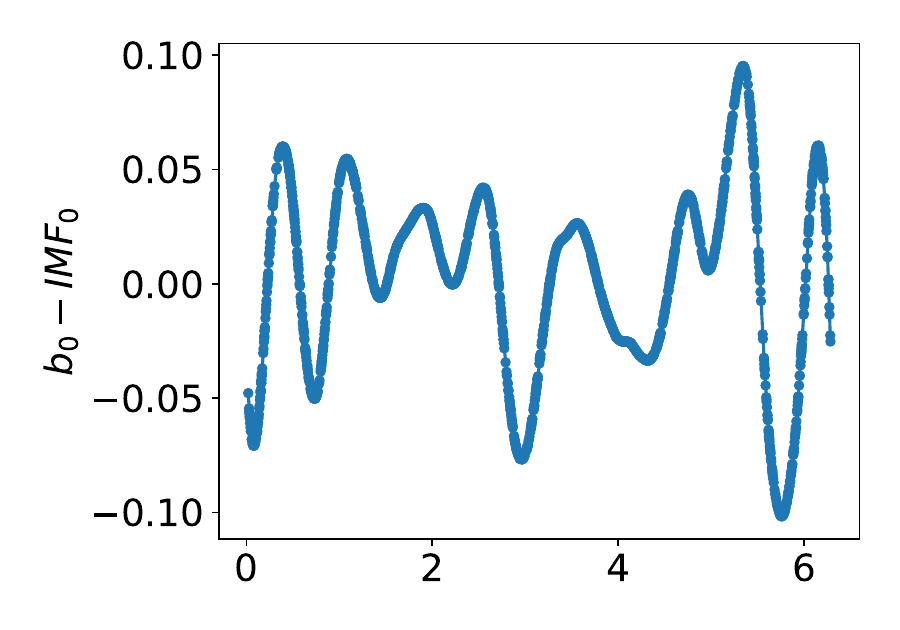}
      \caption{\label{fig:ex_1DB_error_0}}
    \end{subfigure}
  }
  \makebox[\textwidth][c]{
    \begin{subfigure}{0.33\textwidth}
      \centering
      \includegraphics[width=\textwidth]{./example_1_base_signal_1.pdf}
      \caption{\label{fig:ex_1comp_DB_signal_1}}
    \end{subfigure}
    \begin{subfigure}{0.33\textwidth}
      \centering
      \includegraphics[width=\textwidth]{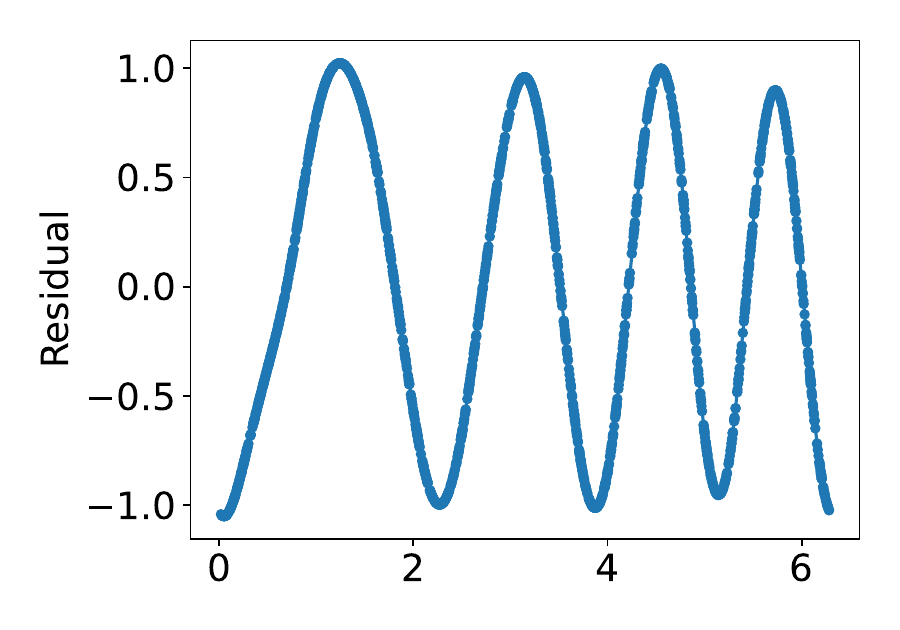}
      \caption{\label{fig:ex_1DB_IMF_1}}
    \end{subfigure}
    \begin{subfigure}{0.33\textwidth}
      \centering
      \includegraphics[width=\textwidth]{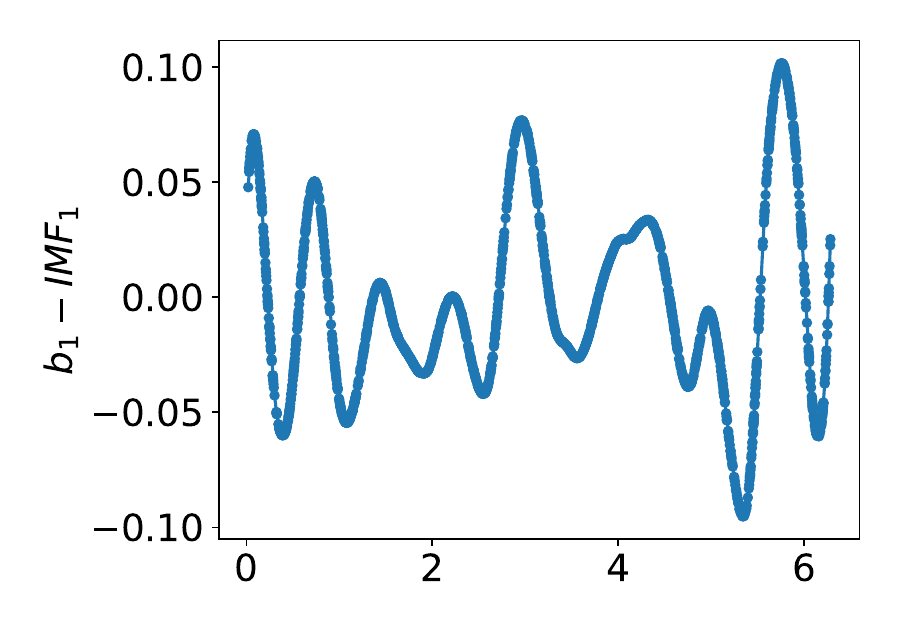}
      \caption{\label{fig:ex_1DB_error_1}}
    \end{subfigure}
  }
  \caption{Results of the DB-IF algorithm applied to the signal $s$ (shown in~\Cref{fig:ex_1signal}). The figure~(\subref{fig:ex_1DB_IMF_0}) shows the first IMF obtained
    with the DB-IF algorithm alongside the expected result
    (\subref{fig:ex_1comp_DB_signal_0}) and their difference~(\subref{fig:ex_1DB_error_0}). Similarly, the figure~(\subref{fig:ex_1DB_IMF_1}) shows
  the residual signal compared to the expected result~(\subref{fig:ex_1comp_DB_signal_1}) and their difference~(\subref{fig:ex_1DB_error_1}).\label{fig:ex_1DB_results}}
\end{figure}

\begin{figure}[h]
  \centering
  \makebox[\textwidth][c]{
    \begin{subfigure}{0.33\textwidth}
      \centering
      \includegraphics[width=\textwidth]{./example_1_FIF_base_signal_0.pdf}
      \caption{\label{fig:ex_1comp_FIF_signal_0}}
    \end{subfigure}
    \begin{subfigure}{0.33\textwidth}
      \centering
      \includegraphics[width=\textwidth]{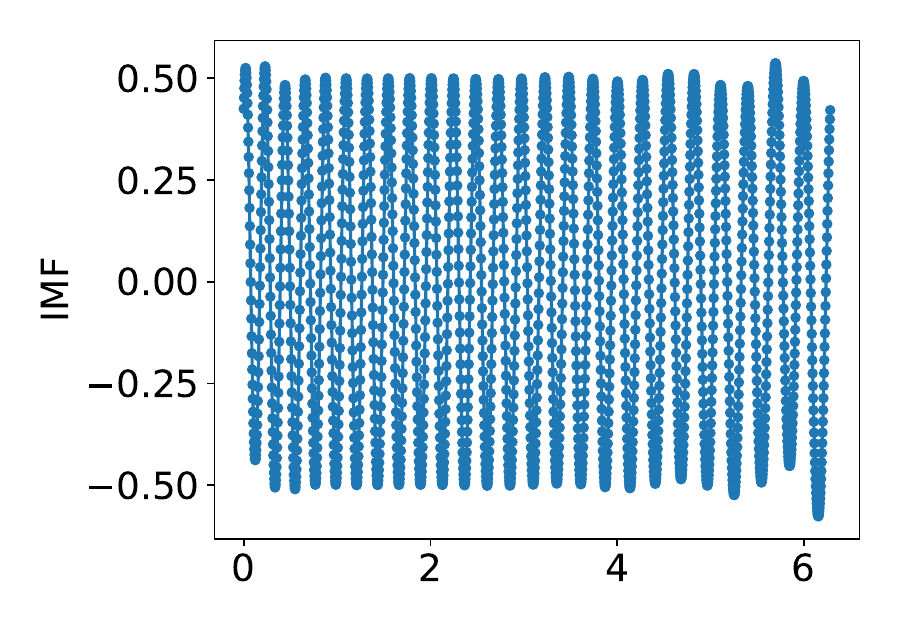}
      \caption{\label{fig:ex_1FIF_IMF_0}}
    \end{subfigure}
    \begin{subfigure}{0.33\textwidth}
      \centering
      \includegraphics[width=\textwidth]{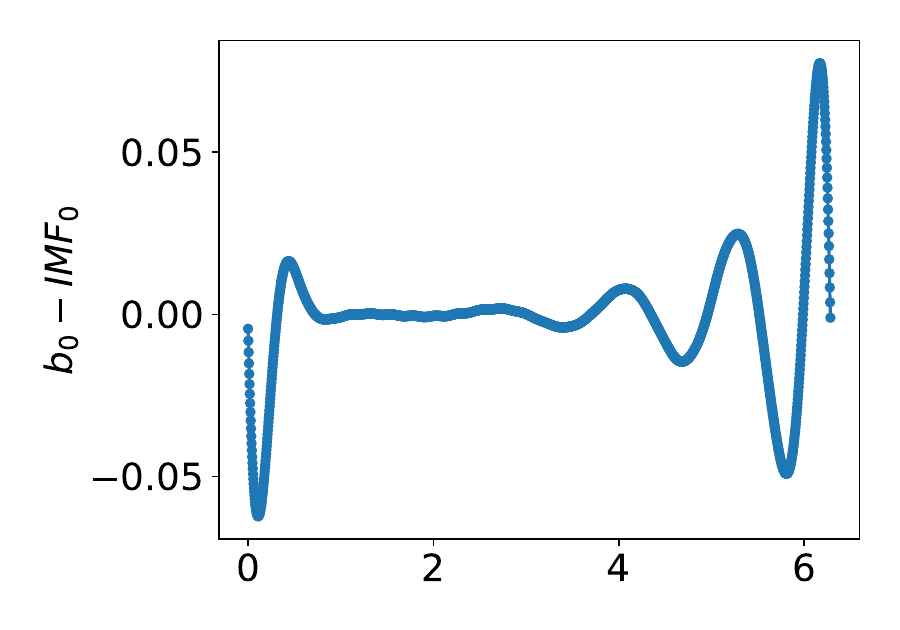}
      \caption{\label{fig:ex_1FIF_error_0}}
    \end{subfigure}
  }
  \makebox[\textwidth][c]{
    \begin{subfigure}{0.33\textwidth}
      \centering
      \includegraphics[width=\textwidth]{./example_1_FIF_base_signal_1.pdf}
      \caption{\label{fig:ex_1comp_FIF_signal_1}}
    \end{subfigure}
    \begin{subfigure}{0.33\textwidth}
      \centering
      \includegraphics[width=\textwidth]{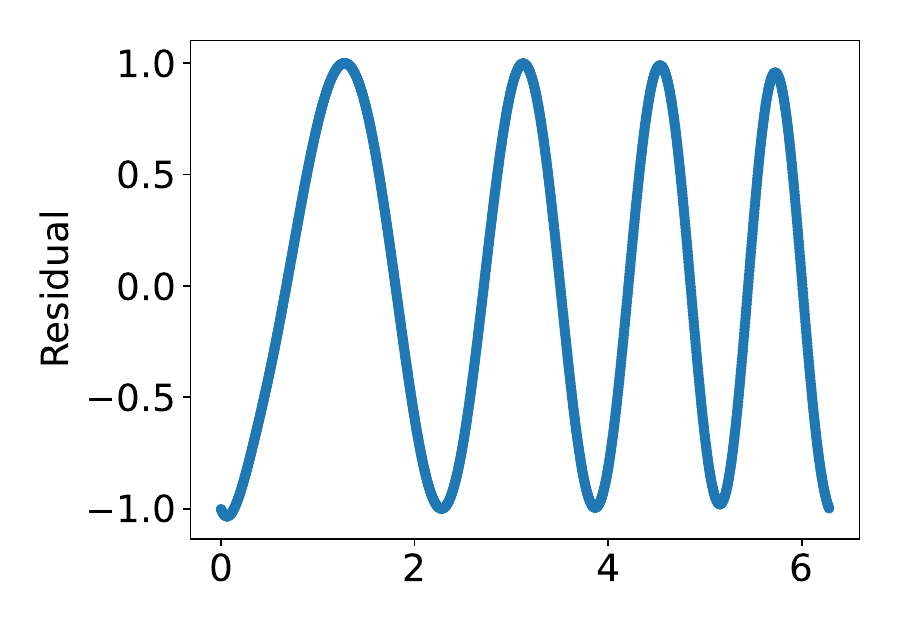}
      \caption{\label{fig:ex_1FIF_IMF_1}}
    \end{subfigure}
    \begin{subfigure}{0.33\textwidth}
      \centering
      \includegraphics[width=\textwidth]{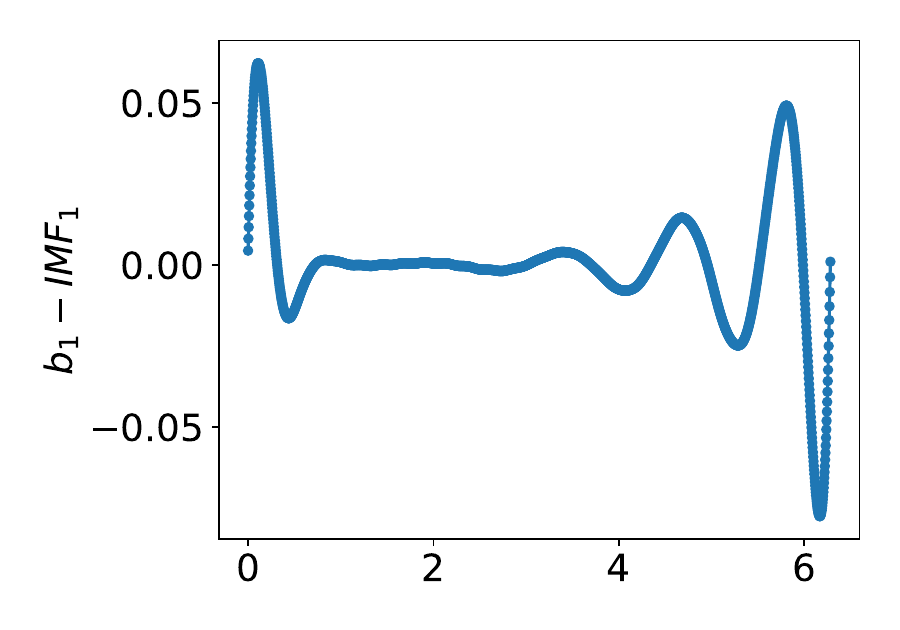}
      \caption{\label{fig:ex_1FIF_error_1}}
    \end{subfigure}
  }
  \caption{Results of the FIF algorithm applied to the signal $s$ (shown in~\Cref{fig:ex_1FIF_signal}). The figure~(\subref{fig:ex_1FIF_IMF_0}) shows the first IMF obtained
    with the FIF algorithm alongside the expected result
    (\subref{fig:ex_1comp_FIF_signal_0}) and their difference~(\subref{fig:ex_1FIF_error_0}). Similarly, the figure~(\subref{fig:ex_1FIF_IMF_1}) shows
  the residual signal compared to the expected result~(\subref{fig:ex_1comp_FIF_signal_1}) and their difference~(\subref{fig:ex_1FIF_error_1}).\label{fig:ex_1FIF_results}}
\end{figure}

\FloatBarrier

\begin{table}[width=.4\linewidth,cols=3,pos=h]
  \caption{Precomputation CPU time (seconds) for different numbers of points $n$.}
  \label{tab:ex_1CPU_time_precomputation}
  \begin{tabular*}{\tblwidth}{@{}LLL@{}}
    \toprule
    $n$ & GFT-IF & DB-IF \\ [0.5ex]
    \midrule
    128 & 7.32e-03 & 3.07e-05 \\
    512 & 1.40e-01 & 2.37e-03 \\
    2048 & 5.96 & 2.25e-02 \\
    8192 & 3.81e+02 & 2.87e-01 \\
    \bottomrule
  \end{tabular*}
\end{table}

\begin{table}[width=.4\linewidth,cols=4,pos=h]
  \centering
  \caption{Execution CPU time (seconds) excluding precomputations ($k=10$ IMFs, $m=10$ iterations).}
  \label{tab:ex_1CPU_time_total_ex_precomputation}
  \begin{tabular*}{\tblwidth}{@{}LLLL@{}}
    \toprule
    $n$ & GFT-IF & DB-IF & FIF \\ [0.5ex]
    \midrule
    128 &	3.54e-04 &	2.12e-03 &	5.30e-04 \\
    512 &	1.71e-03 &	5.83e-02 &	5.96e-04 \\
    2048 &	2.24e-02 &	9.15e-01 &	8.91e-04 \\
    8192 &	2.95e-01 &	1.90e+01 &	1.99e-03 \\
    \bottomrule
  \end{tabular*}
\end{table}

\subsection{Example 2: Delaunay triangulation of random points in 2D}
\label{sec:example_2}
In this example, we have tested the GFT-IF and the DB-IF algorithms on a 2D signal on a grid of random points uniformly distributed on the square $[0, 2\pi) \times [0, 2\pi)$.
The graph is constructed as the Delaunay triangulation of those points \cite{Delaunay}. The edges are weighted by $1/d(i,j)$, where $d(i,j)$ is the Euclidean distance between the vertices $i$ and $j$.
The tested signal is the sum of two sinusoidal functions with non-stationary frequencies
\begin{equation*}
  \begin{split}
    b_0(x,y) &= \frac{1}{2} \sin(5 x + 5 y),\\
    b_1(x,y) &= \cos(x - y),\\
  \end{split}
\end{equation*}
and it is shown in \Cref{fig:ex_2signals}.

\Cref{fig:ex_2GFT_kernel} shows the GFT of
the signal $s$ compared with the convolution kernel used in the GFT-IF
algorithm.
\Cref{fig:ex_2windows} shows some examples of the window functions used in the GFT-IF and DB-IF methods.
We observe that the windows of both methods are qualitatively similar, but the windows of the GFT-IF algorithm have some small negative values, while the windows of the DB-IF algorithm are slightly shifted towards the center of the domain.

\Cref{fig:ex_2GFT_results,fig:ex_2DB_results} shows the results of the GFT-IF and DB-IF algorithms, respectively.
In both cases, the methods are able to extract IMFs that are qualitatively similar to the expected components of the signal, but the error of the decomposition is not negligible due to some low-frequency noise and boundary effects showing up in the error, which are cancelled when the two components are summed together.

\begin{figure}[h]
  \centering
  \makebox[\textwidth][c]{
    \begin{subfigure}{0.33\textwidth}
      \centering
      \includegraphics[width=\textwidth]{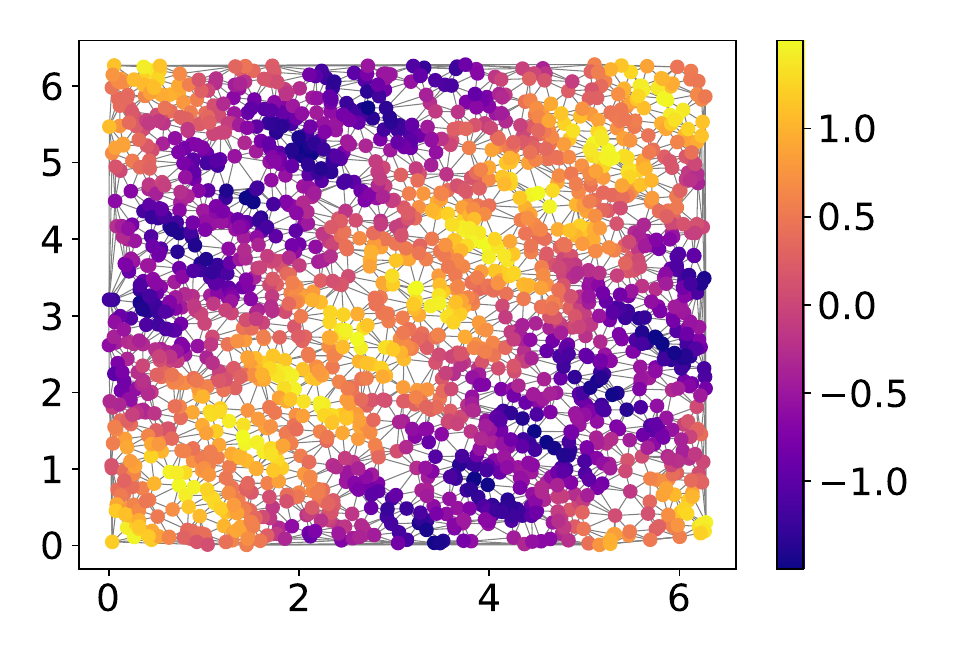}
      \subcaption{\label{fig:ex_2signal}}
    \end{subfigure}
    \begin{subfigure}{0.33\textwidth}
      \centering
      \includegraphics[width=\textwidth]{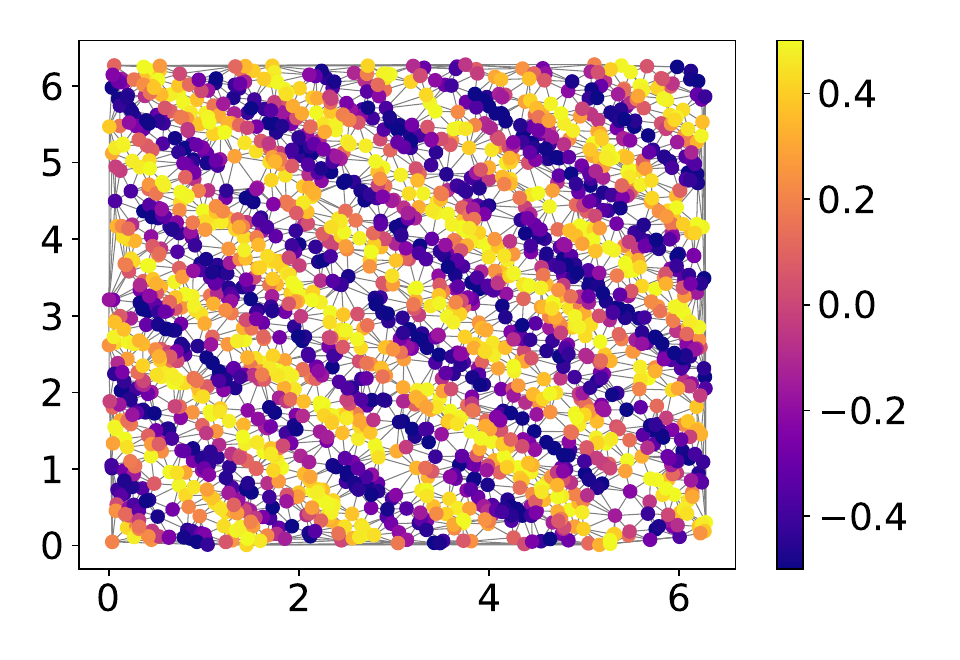}
      \subcaption{\label{fig:ex_2base_signal_0}}
    \end{subfigure}
    \begin{subfigure}{0.33\textwidth}
      \centering
      \includegraphics[width=\textwidth]{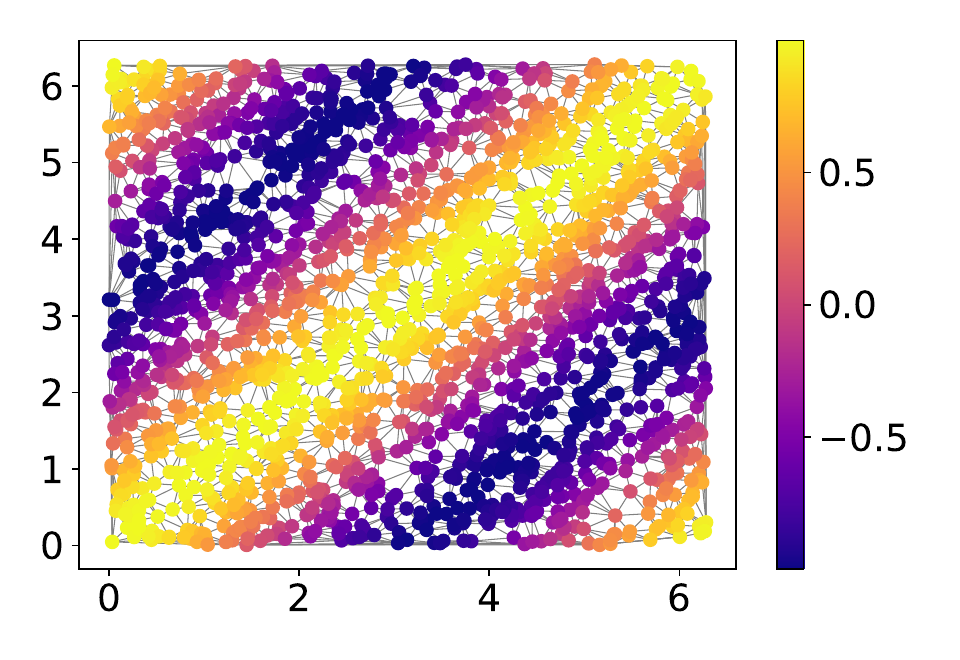}
      \subcaption{\label{fig:ex_2base_signal_1}}
    \end{subfigure}
  }
  \caption{(\subref{fig:ex_2signal}) Signal s obtained as the sum of a high frequency component (\subref{fig:ex_2base_signal_0}) and a low frequency one (\subref{fig:ex_2base_signal_1}).
  \label{fig:ex_2signals}}
\end{figure}

\begin{figure}[h]
  \centering
    \includegraphics[width=.6\textwidth]{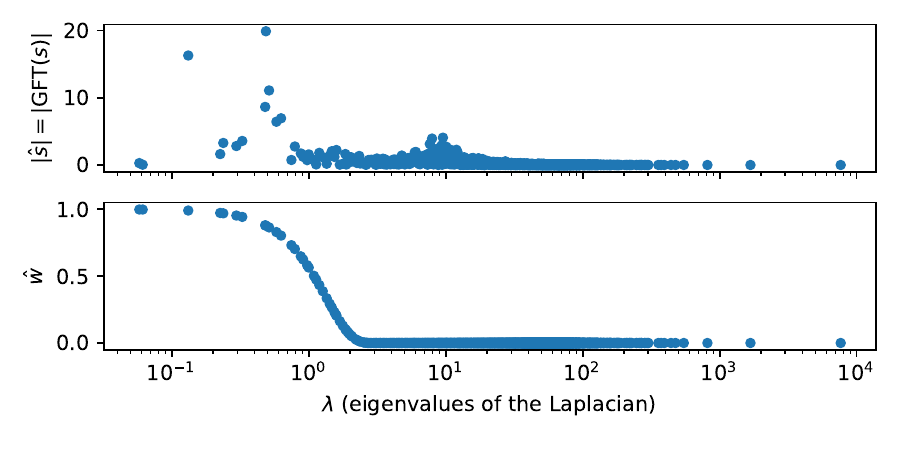}
    \caption{GFT of the signal $s$ (see \Cref{fig:ex_2signal}) compared with the convolution kernel used in the GFT-IF algorithm.\label{fig:ex_2GFT_kernel}}
\end{figure}

\begin{figure}[h]
  \makebox[\textwidth][c]{
    \begin{subfigure}{0.33\textwidth}
      \centering
      \includegraphics[width=\textwidth]{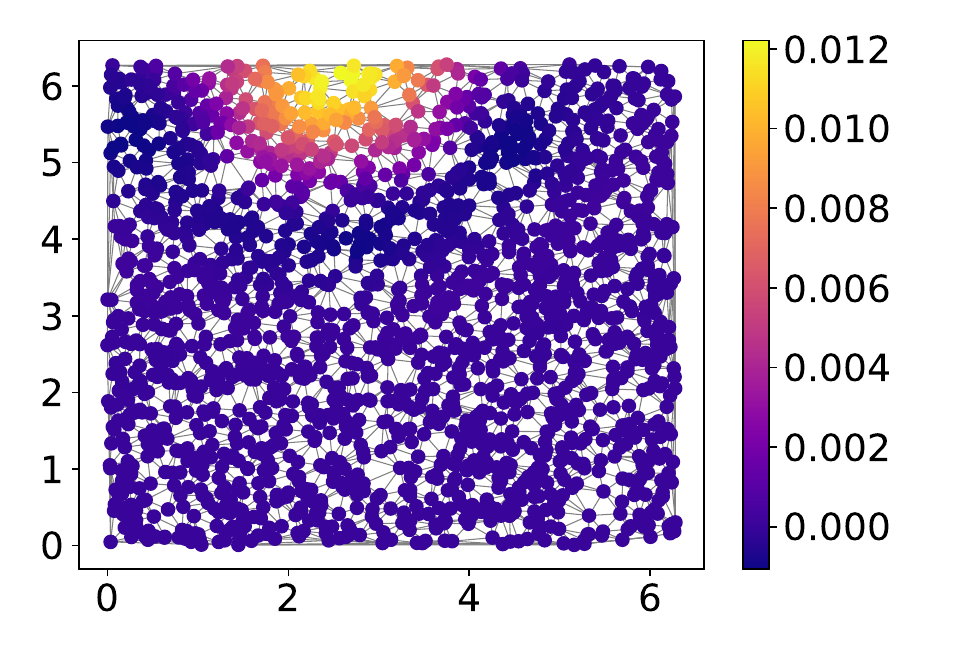}
    \end{subfigure}
    \begin{subfigure}{0.33\textwidth}
      \centering
      \includegraphics[width=\textwidth]{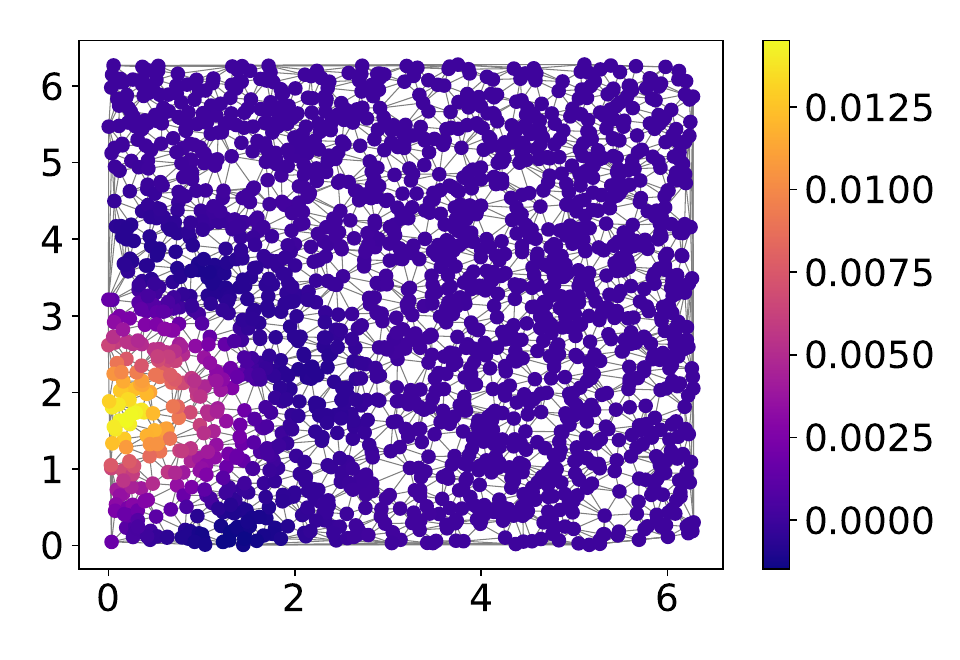}
    \end{subfigure}
    \begin{subfigure}{0.33\textwidth}
      \centering
      \includegraphics[width=\textwidth]{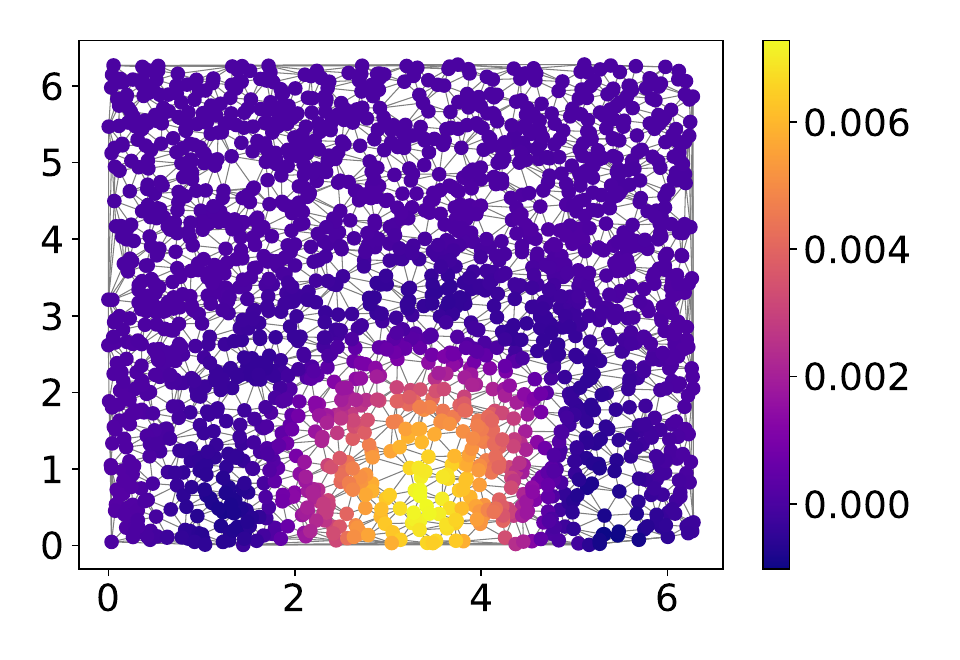}
    \end{subfigure}
  }
  \makebox[\textwidth][c]{
    \begin{subfigure}{0.33\textwidth}
      \centering
      \includegraphics[width=\textwidth]{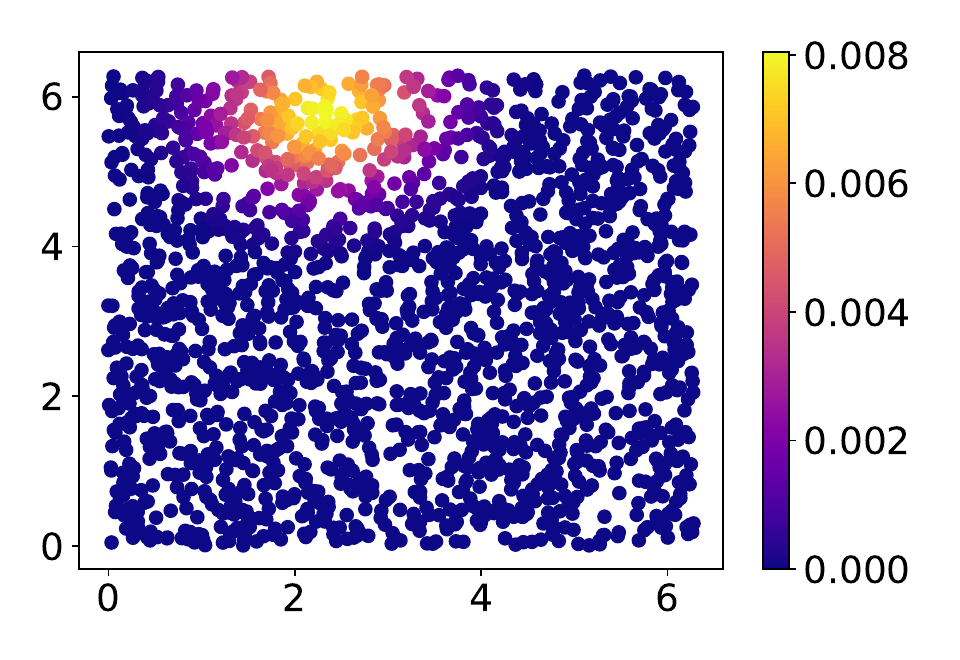}
    \end{subfigure}
    \begin{subfigure}{0.33\textwidth}
      \centering
      \includegraphics[width=\textwidth]{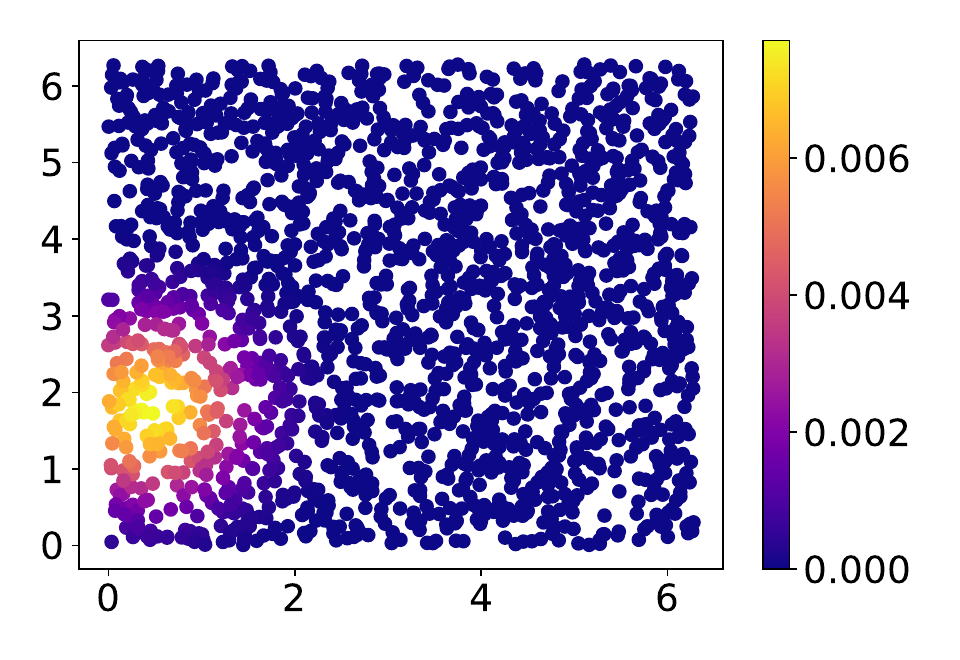}
    \end{subfigure}
    \begin{subfigure}{0.33\textwidth}
      \centering
      \includegraphics[width=\textwidth]{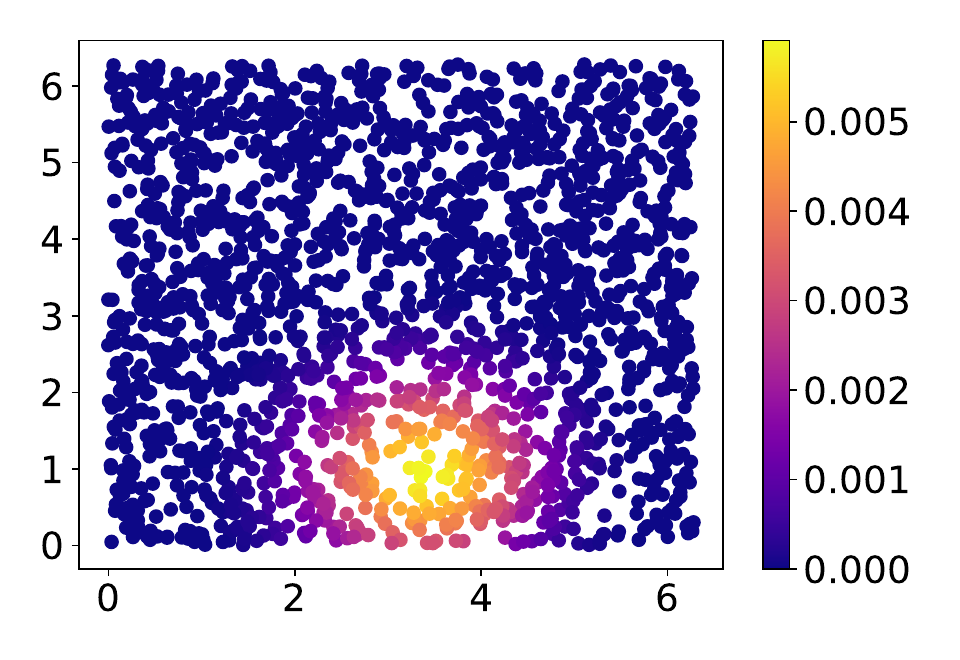}
    \end{subfigure}
  }
  \caption{Examples of the window functions used in the GFT-IF (first row) and DB-IF (second row) algorithms. In the case of the GFT-IF algorithm, those windows are obtained by convolving the kernel $w$ with delta signals centered at different vertices of the graph. In the case of the DB-IF algorithm, those windows correspond to rows of the window matrix $W$.
  \label{fig:ex_2windows}}
\end{figure}

\begin{figure}[h]
  \centering
  \makebox[\textwidth][c]{
    \begin{subfigure}{0.33\textwidth}
      \centering
      \includegraphics[width=\textwidth]{./example_2_base_signal_0.pdf}
      \caption{\label{fig:ex_2comp_signal_0}}
    \end{subfigure}
    \begin{subfigure}{0.33\textwidth}
      \centering
      \includegraphics[width=\textwidth]{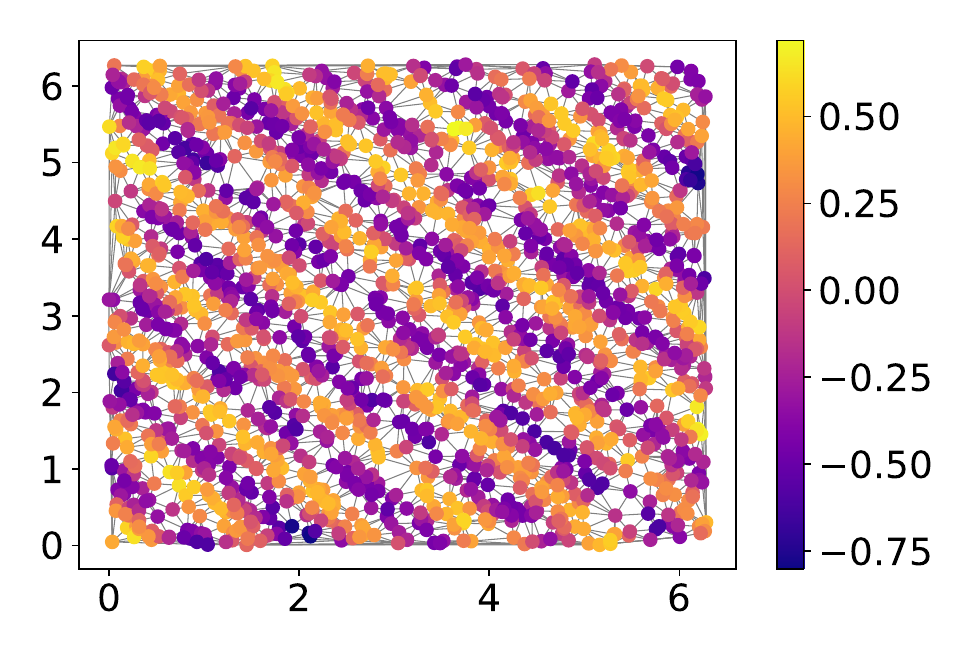}
      \caption{\label{fig:ex_2IMF_0}}
    \end{subfigure}
    \begin{subfigure}{0.33\textwidth}
      \centering
      \includegraphics[width=\textwidth]{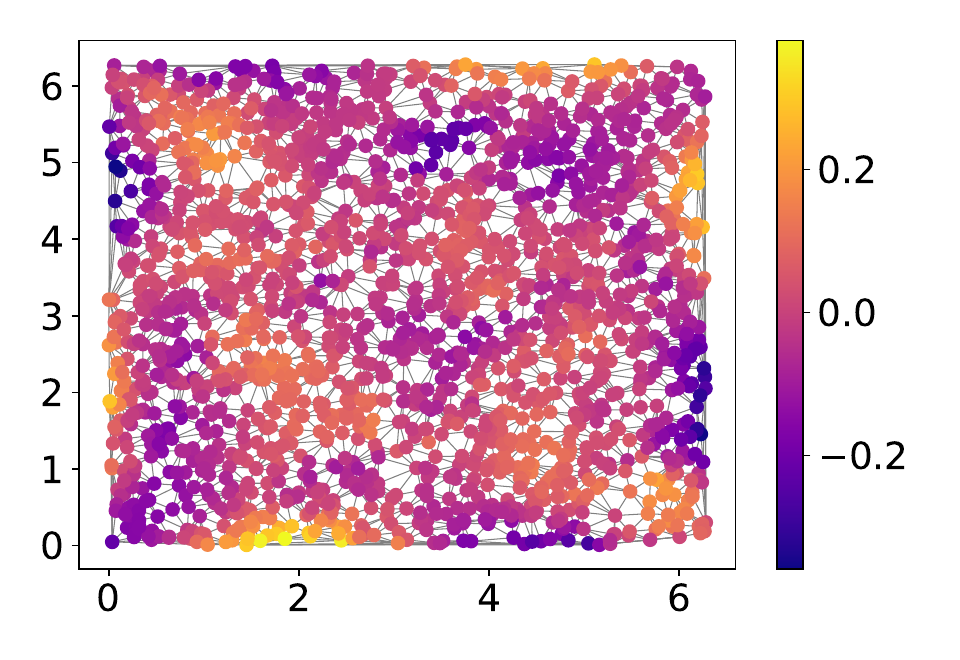}
      \caption{\label{fig:ex_2error_0}}
    \end{subfigure}
  }
  \makebox[\textwidth][c]{
    \begin{subfigure}{0.33\textwidth}
      \centering
      \includegraphics[width=\textwidth]{./example_2_base_signal_1.pdf}
      \caption{\label{fig:ex_2comp_signal_1}}
    \end{subfigure}
    \begin{subfigure}{0.33\textwidth}
      \centering
      \includegraphics[width=\textwidth]{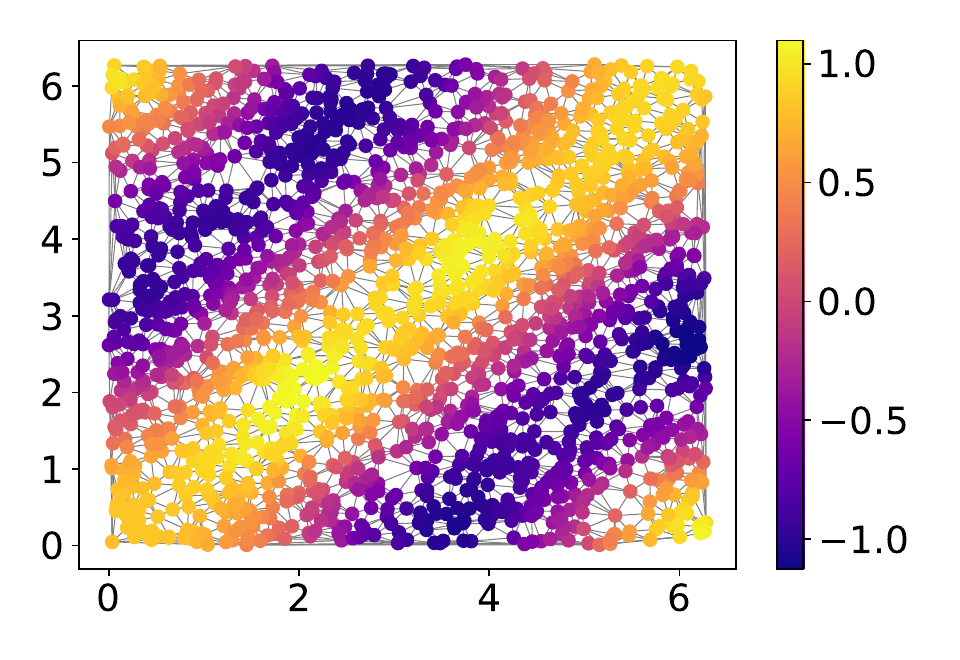}
      \caption{\label{fig:ex_2IMF_1}}
    \end{subfigure}
    \begin{subfigure}{0.33\textwidth}
      \centering
      \includegraphics[width=\textwidth]{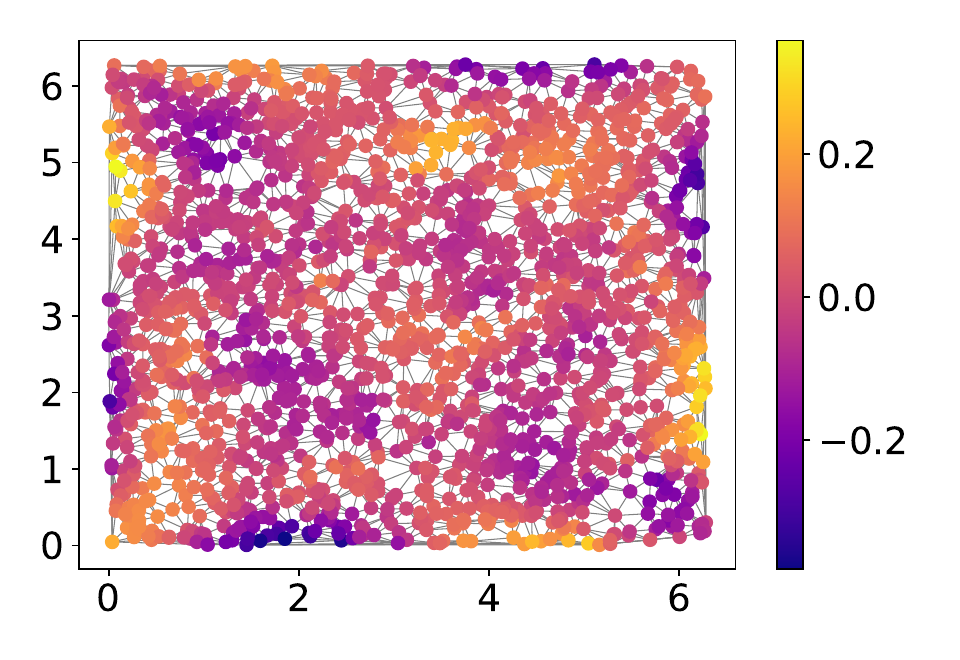}
      \caption{\label{fig:ex_2error_1}}
    \end{subfigure}
  }
  \caption{Results of the GFT-IF algorithm applied to the signal $s$ (shown in~\Cref{fig:ex_2signal}). The figure~(\subref{fig:ex_2IMF_0}) shows the first IMF obtained
    with the GFT-IF algorithm alongside the expected result
    (\subref{fig:ex_2comp_signal_0}) and their difference~(\subref{fig:ex_2error_0}). Similarly, the figure~(\subref{fig:ex_2IMF_1}) shows
  the residual signal compared to the expected result~(\subref{fig:ex_2comp_signal_1}) and their difference~(\subref{fig:ex_2error_1}).\label{fig:ex_2GFT_results}}
\end{figure}

\begin{figure}[h]
  \centering
  \makebox[\textwidth][c]{
    \begin{subfigure}{0.33\textwidth}
      \centering
      \includegraphics[width=\textwidth]{./example_2_base_signal_0.pdf}
      \caption{\label{fig:ex_2comp_DB_signal_0}}
    \end{subfigure}
    \begin{subfigure}{0.33\textwidth}
      \centering
      \includegraphics[width=\textwidth]{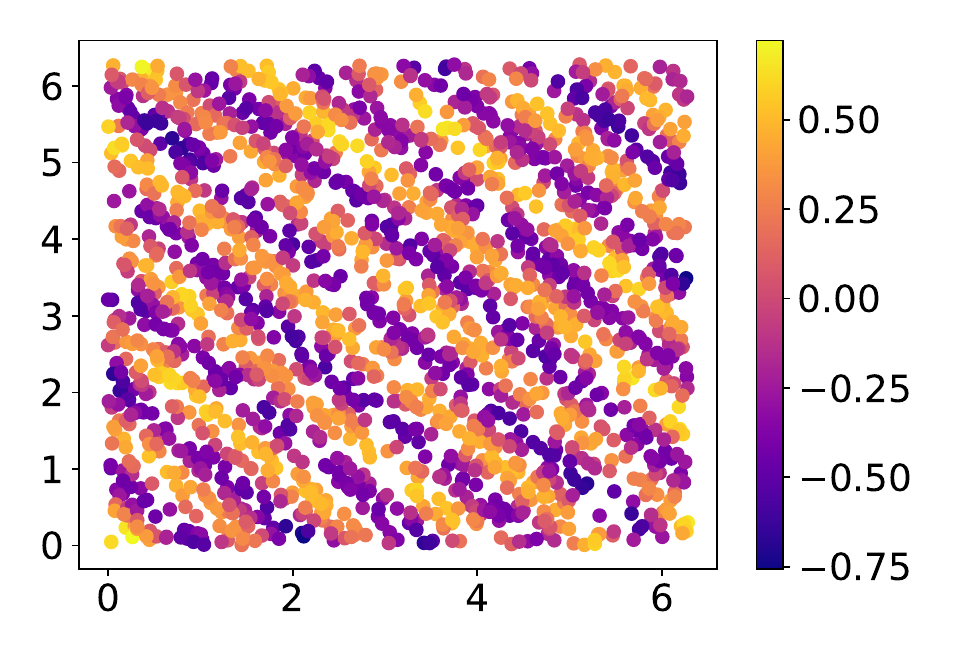}
      \caption{\label{fig:ex_2DB_IMF_0}}
    \end{subfigure}
    \begin{subfigure}{0.33\textwidth}
      \centering
      \includegraphics[width=\textwidth]{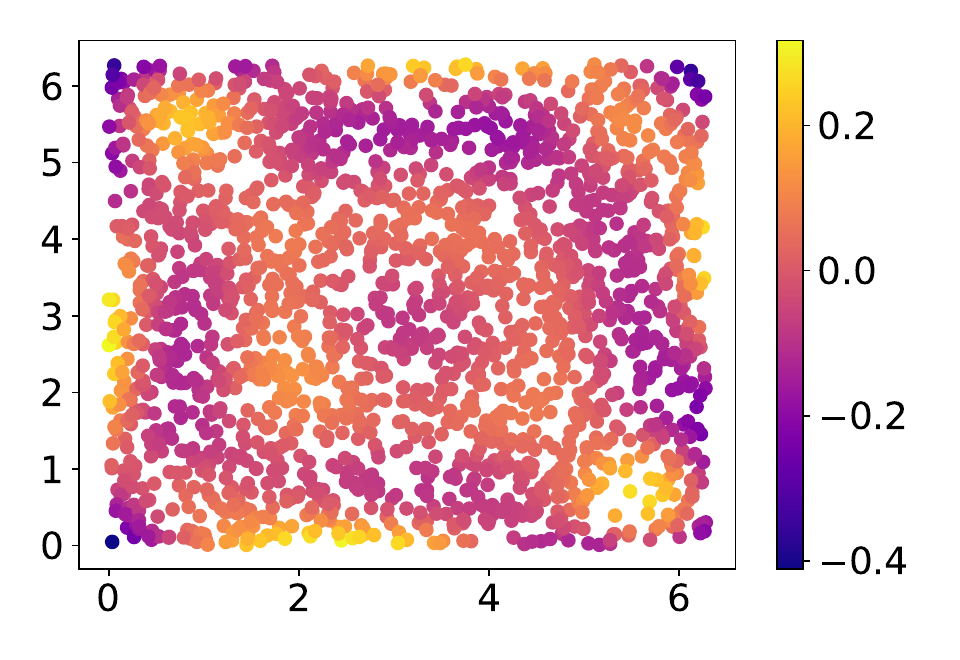}
      \caption{\label{fig:ex_2DB_error_0}}
    \end{subfigure}
  }
  \makebox[\textwidth][c]{
    \begin{subfigure}{0.33\textwidth}
      \centering
      \includegraphics[width=\textwidth]{./example_2_base_signal_1.pdf}
      \caption{\label{fig:ex_2comp_DB_signal_1}}
    \end{subfigure}
    \begin{subfigure}{0.33\textwidth}
      \centering
      \includegraphics[width=\textwidth]{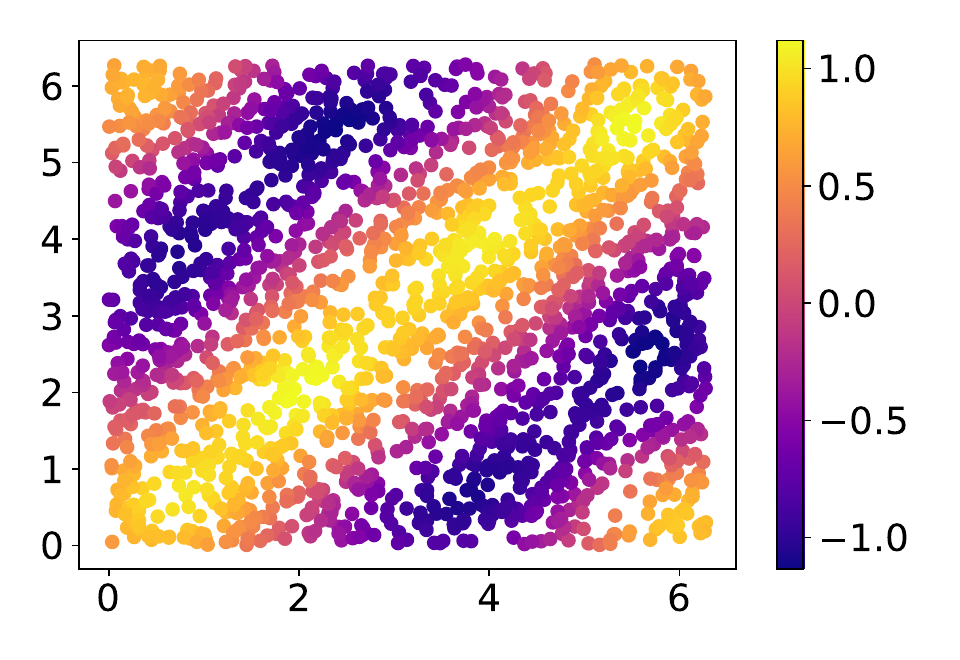}
      \caption{\label{fig:ex_2DB_IMF_1}}
    \end{subfigure}
    \begin{subfigure}{0.33\textwidth}
      \centering
      \includegraphics[width=\textwidth]{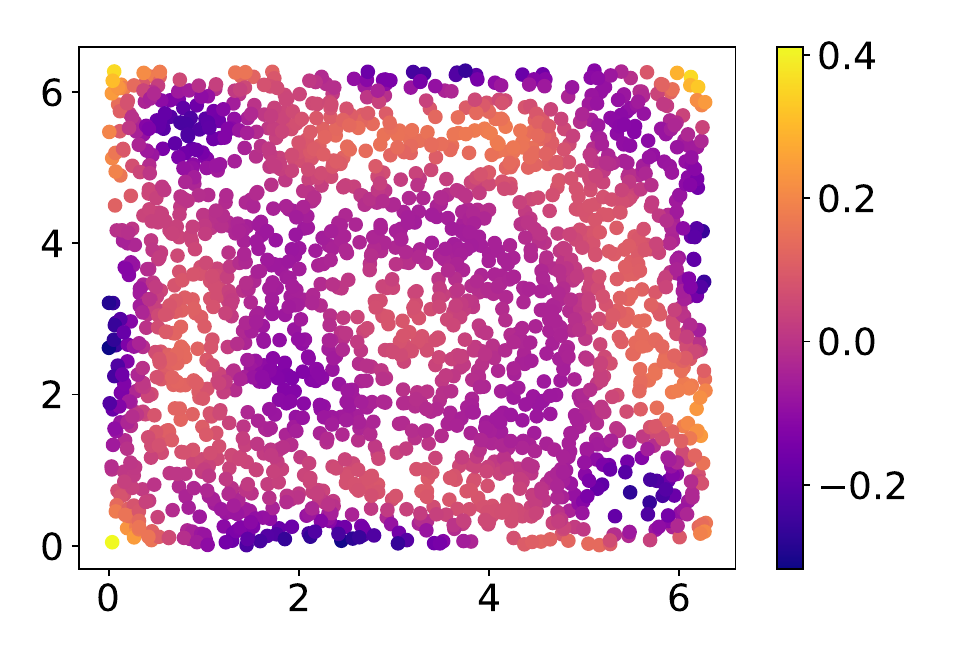}
      \caption{\label{fig:ex_2DB_error_1}}
    \end{subfigure}
  }
  \caption{Results of the DB-IF algorithm applied to the signal $s$ (shown in~\Cref{fig:ex_2signal}). The figure~(\subref{fig:ex_2DB_IMF_0}) shows the first IMF obtained
    with the DB-IF algorithm alongside the expected result
    (\subref{fig:ex_2comp_DB_signal_0}) and their difference~(\subref{fig:ex_2DB_error_0}). Similarly, the figure~(\subref{fig:ex_2DB_IMF_1}) shows
  the residual signal compared to the expected result~(\subref{fig:ex_2comp_DB_signal_1}) and their difference~(\subref{fig:ex_2DB_error_1}).\label{fig:ex_2DB_results}}
\end{figure}

\FloatBarrier

\subsection{Example 3: magnitude time series of the global seismicity}
\label{sec:example_3}

In this example, we analyze the magnitude time series of the global seismicity for events of magnitude $M \ge 5.0$ from January 1, 1976 to December 20, 1980. The data was downloaded from the Global Centroid Moment Tensor (CMT) Project (\url{https://www.globalcmt.org/}). Due to the aleatory nature of the seismicity, the series is non-uniformly sampled in time. This gives a good example of a case in which the classical FIF algorithm cannot be directly applied, while the GFT-IF and DB-IF algorithms can be applied directly on the real data.

The graph used in this case is a ring graph constructed in the same way as in Example 1 in \Cref{sec:example_1}.

\Cref{fig:ex_3IMFs} shows the original signal in the first row, and in the following rows the IMFs obtained with the GFT-IF and DB-IF algorithms.
Even though the results are difficult to interpret due to the noisy and unpredictable nature of the seismicity, we can observe that the extracted trend is similar for both methods, but, as the frequency of the IMFs increases, the results of the two methods start to differ more and more.
\Cref{fig:ex_3windows} shows some examples of the windows used for each IMF extraction in the GFT-IF and DB-IF methods.
For the GFT-IF algorithm, the support of the window kernel in the spectral domain was manually chosen to obtain windows similar to those of the DB-IF algorithm.
\Cref{fig:ex_3GFT_windows_and_IMFs_spectrum} shows the window kernels used for each IMF extraction in the GFT-IF algorithm and the spectrum of the obtained IMFs.
This figure gives a good idea of how IF methods work in the spectral domain, by doing a soft partition of the spectrum of the signal to extract different components of it.

\begin{figure}[h]
  \makebox[\textwidth][c]{
    \begin{subfigure}{0.48\textwidth}
        \centering
        \includegraphics[width=\textwidth]{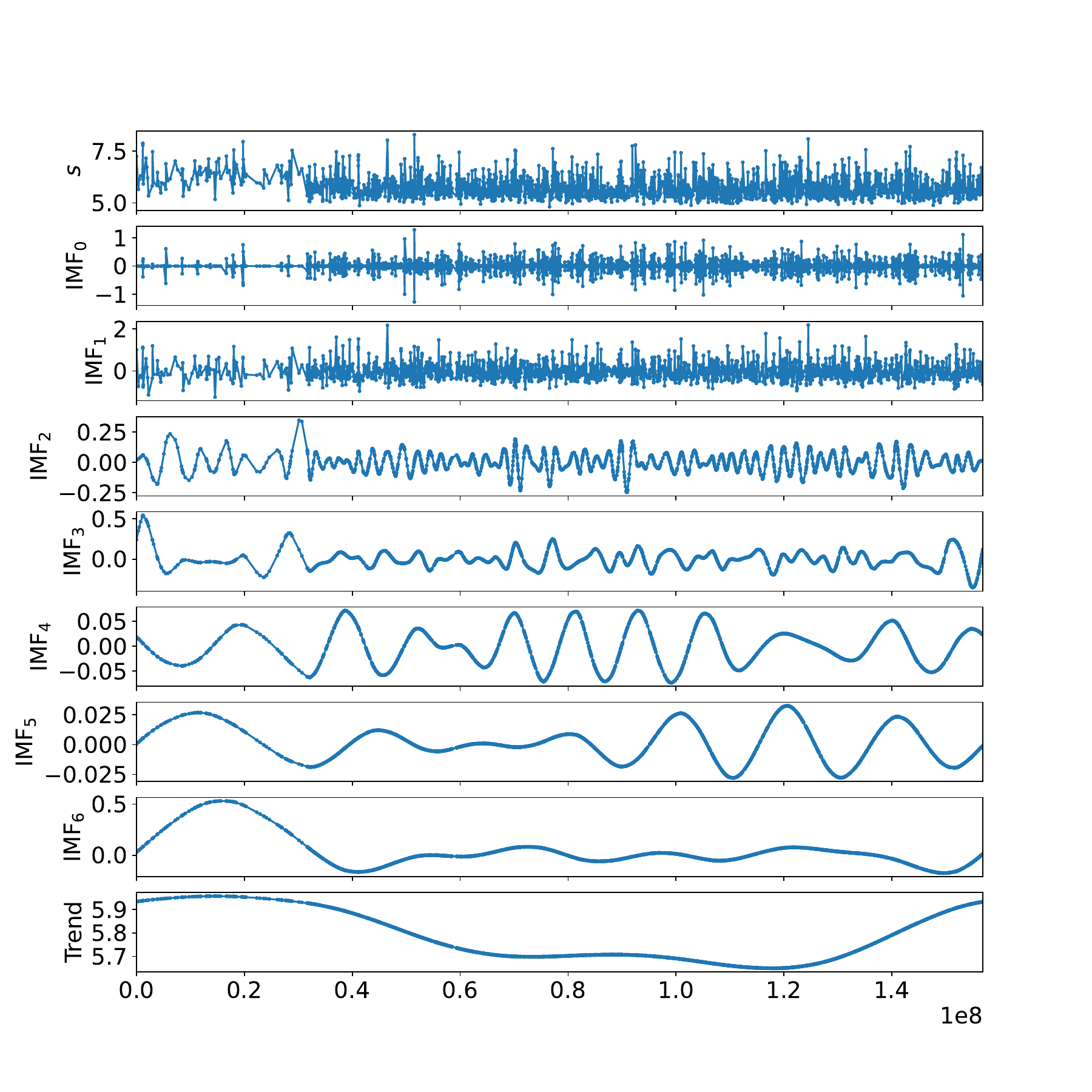}
        \caption{GFT-IF\label{fig:ex_3GFT_IMFs}}
    \end{subfigure}
    \begin{subfigure}{0.48\textwidth}
      \centering
      \includegraphics[width=\textwidth]{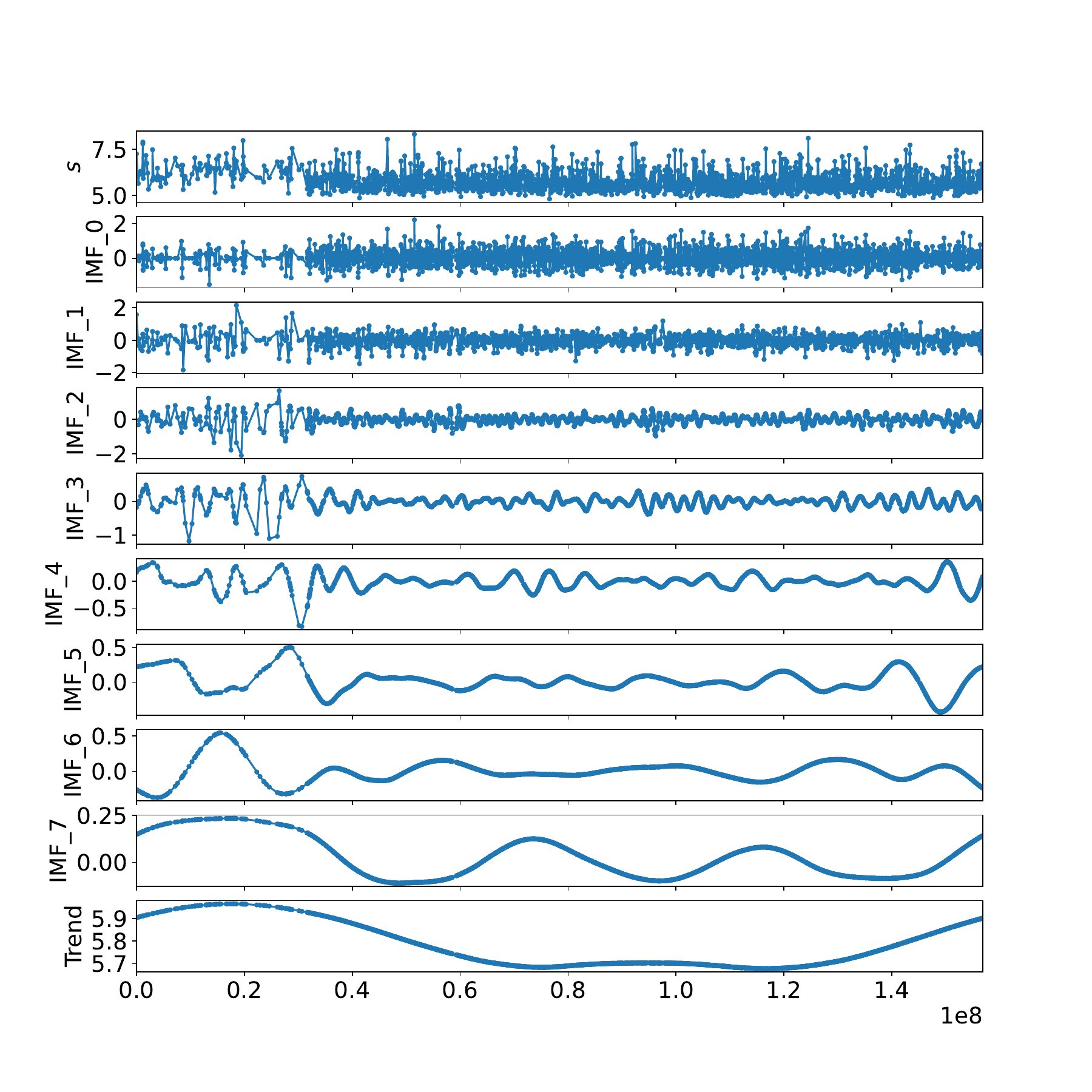}
      \caption{DB-IF\label{fig:ex_3DB_IMFs}}
    \end{subfigure}
  }
  \caption{IMFs obtained with the GFT-IF (\subref{fig:ex_3GFT_IMFs}) and DB-IF (\subref{fig:ex_3DB_IMFs}) algorithms applied to the signal representing the magnitude of earthquakes.\label{fig:ex_3IMFs}}
\end{figure}

\begin{figure}[h]
  \makebox[\textwidth][c]{
    \begin{subfigure}{0.48\textwidth}
        \centering
        \includegraphics[width=\textwidth]{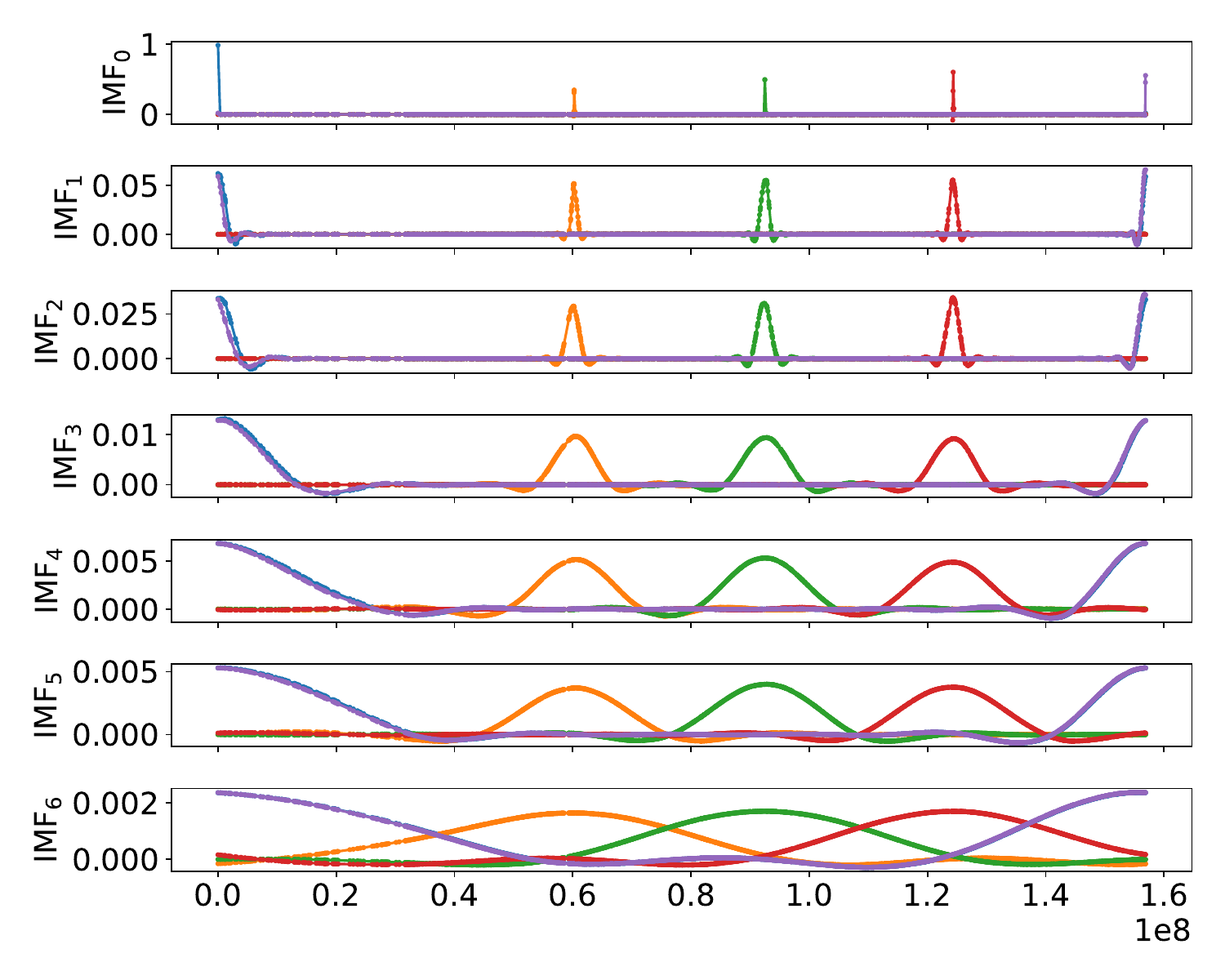}
        \caption{GFT-IF\label{fig:ex_3GFT_windows}}
    \end{subfigure}
    \begin{subfigure}{0.48\textwidth}
      \centering
      \includegraphics[width=\textwidth]{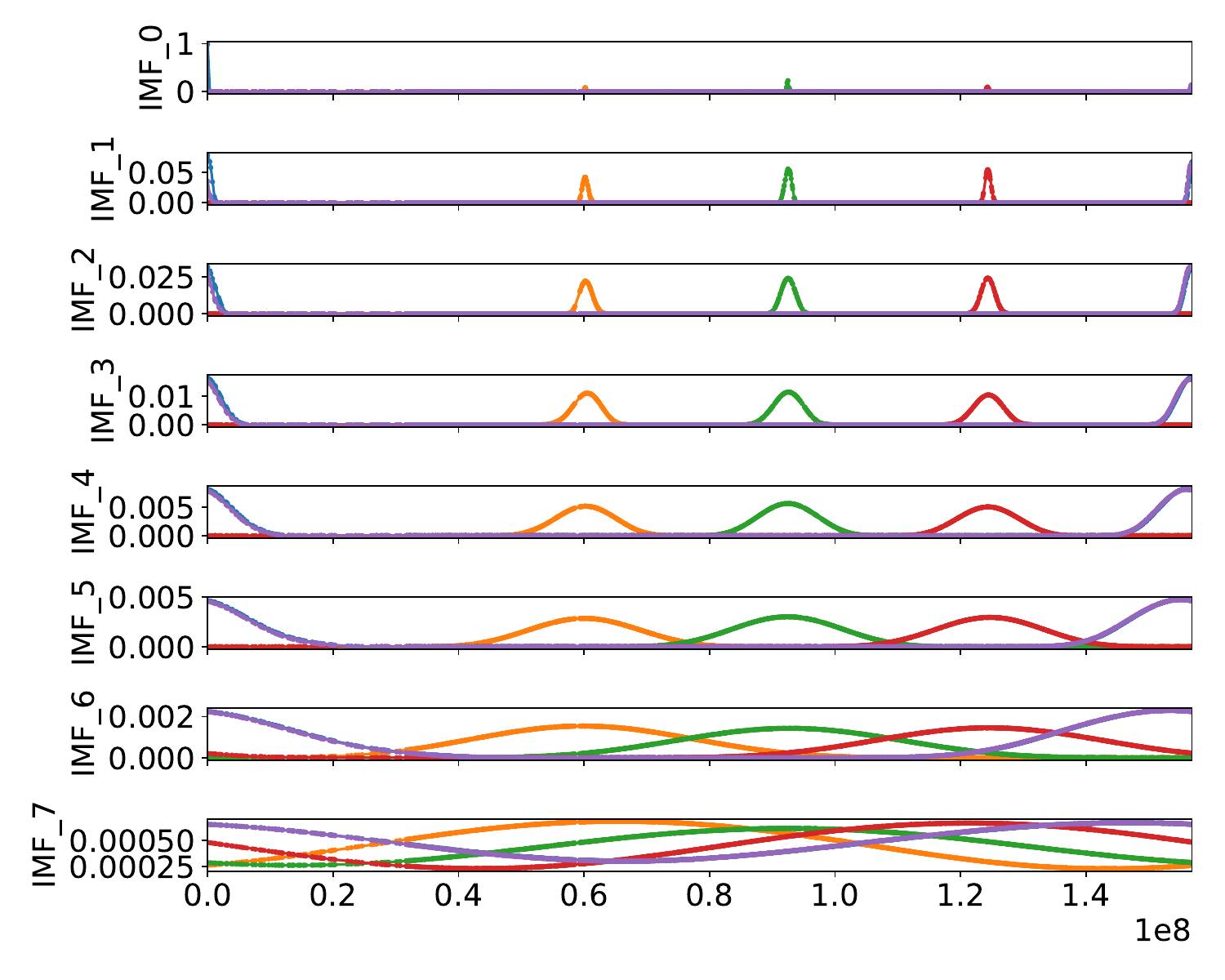}
      \caption{DB-IF\label{fig:ex_3DB_windows}}
    \end{subfigure}
  }
  \caption{Examples of windows used for each IMF extraction in the GFT-IF (\subref{fig:ex_3GFT_windows}) and DB-IF (\subref{fig:ex_3DB_windows}) algorithms for the results shown in \Cref{fig:ex_3IMFs}.\label{fig:ex_3windows}}
\end{figure}

\begin{figure}[h]
  \makebox[\textwidth][c]{
    \begin{subfigure}{0.48\textwidth}
        \centering
        \includegraphics[width=\textwidth]{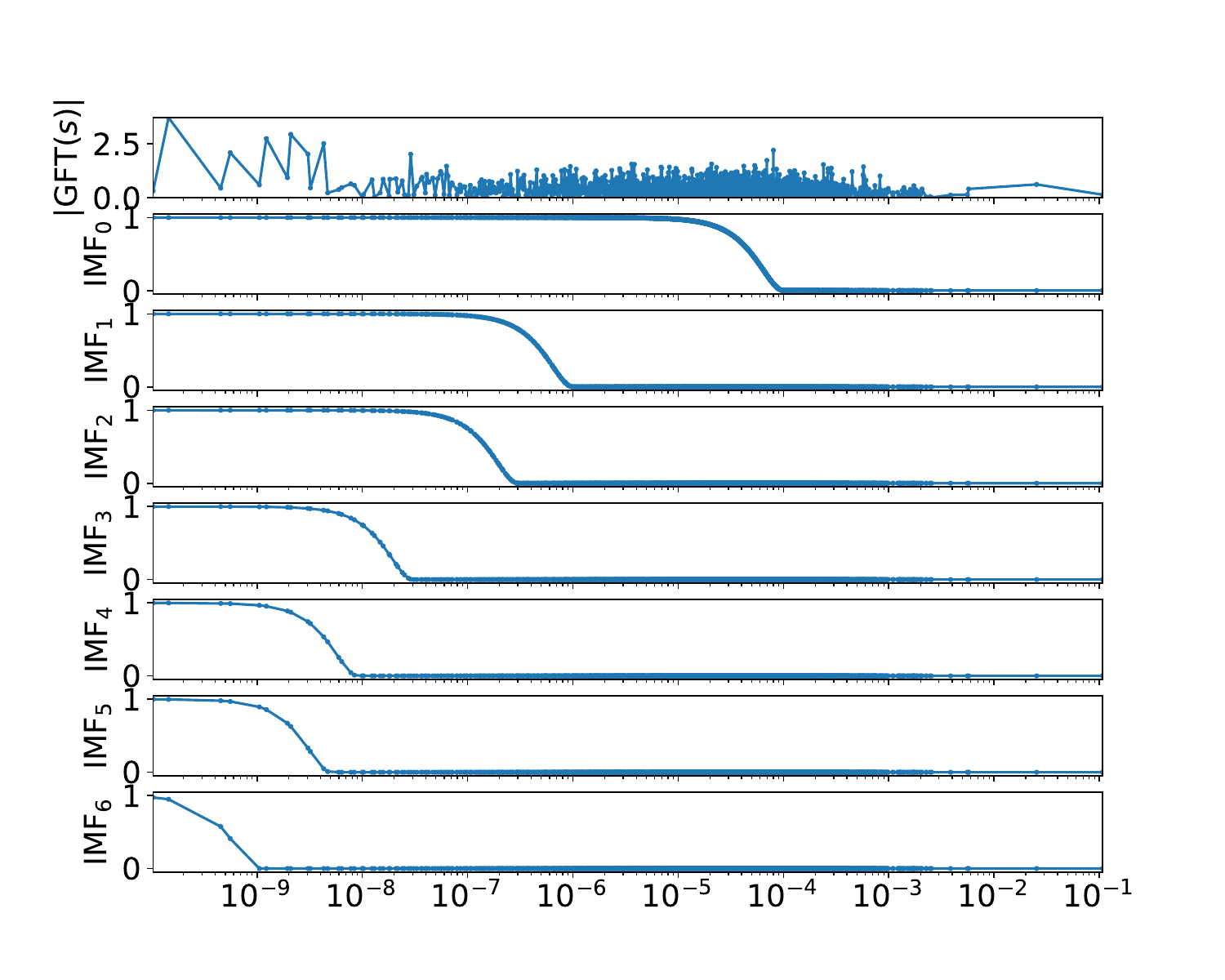}
        \caption{Window kernels\label{fig:ex_3GFT_windows_spectrum}}
    \end{subfigure}
    \begin{subfigure}{0.48\textwidth}
      \centering
      \includegraphics[width=\textwidth]{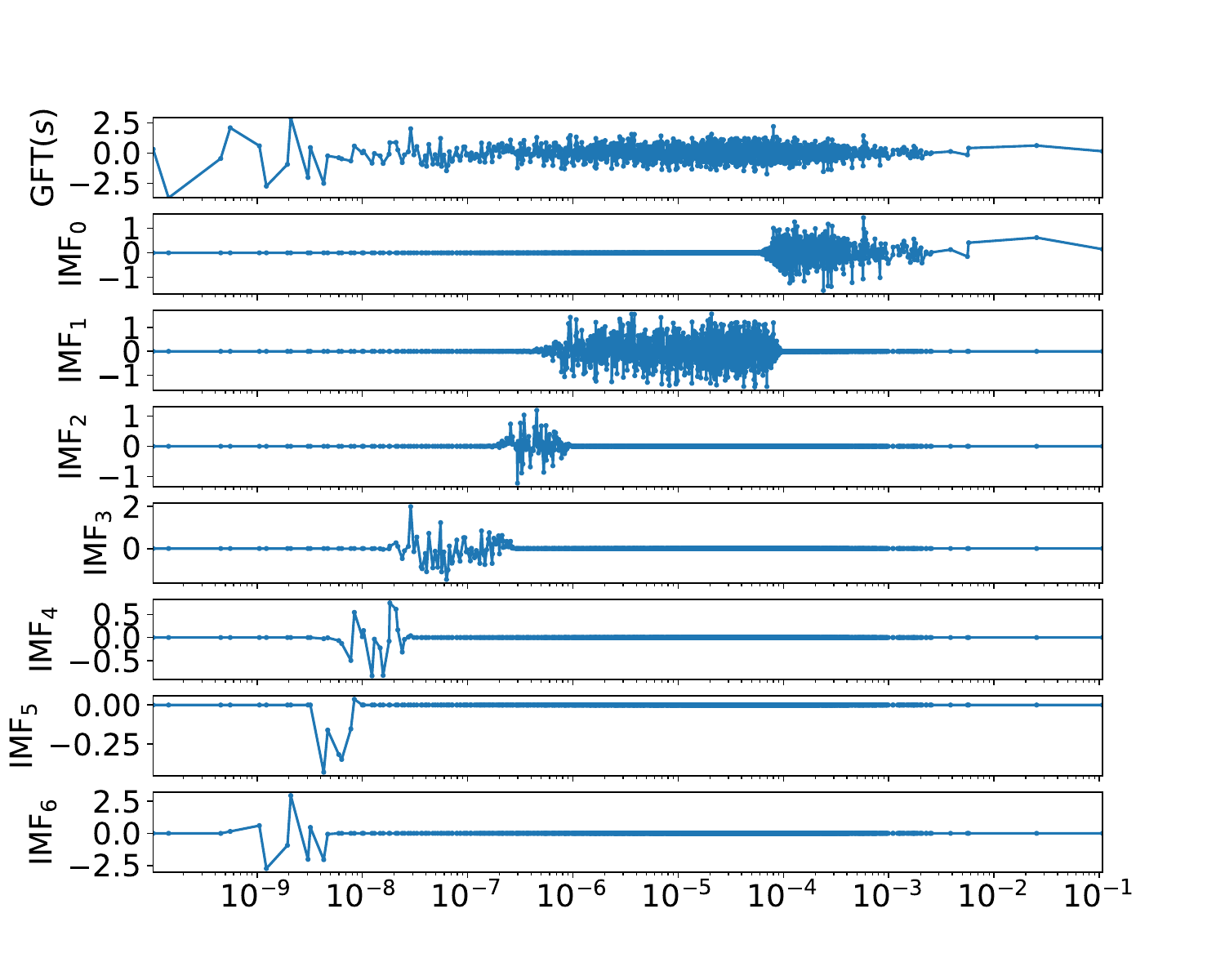}
      \caption{IMFs spectrum\label{fig:ex_3GFT_IMFs_spectrum}}
    \end{subfigure}
  }
  \caption{Window kernels used for each IMF extraction in the GFT-IF algorithm (\subref{fig:ex_3GFT_windows_spectrum}) and spectrum of the obtained IMFs (\subref{fig:ex_3GFT_IMFs_spectrum}) for the results shown in \Cref{fig:ex_3GFT_IMFs}.\label{fig:ex_3GFT_windows_and_IMFs_spectrum}}
\end{figure}

\FloatBarrier

\subsection{Example 4: Total Electron Content over Italy}
\label{sec:example_4}
In this example, we have tested the GFT-IF and the DB-IF algorithms on real satellite data representing the Total Electron Content (TEC) in the Earth's ionosphere over Italy.
Data are obtained from the analysis of signals received from GPS satellites by a network of ground stations and was collected from RING-INGV\footnote{RING-INGV: \url{https://webring.gm.ingv.it/}} and CDDIS-NASA\footnote{CDDIS-NASA: \url{https://cddis.nasa.gov/archive/gnss/data/daily/2025/}}.
The graph is constructed as the Delaunay triangulation of those points with edges weighted by $1/d(i,j)$, where $d(i,j)$ is the Euclidean distance between the vertices $i$ and $j$.
\Cref{fig:ex_4signal} shows the signal representing the TEC over Italy while, \Cref{fig:ex_4GFT_kernel} shows the GFT of
the signal $s$ compared with the convolution kernel used in the GFT-IF
algorithm.
\Cref{fig:ex_4windows} shows some examples of the window functions used in the GFT-IF and DB-IF methods.
Similarly to \Cref{sec:example_2}, the windows of both methods are qualitatively similar, but the windows of the GFT-IF algorithm have some small negative values while the windows of the DB-IF algorithm are slightly shifted towards the center of the domain.
\Cref{fig:ex_4GFT_results,fig:ex_4DB_results} shows the results of the GFT-IF and DB-IF algorithms respectively.
In both cases, the methods separate high frequency noise from a trend component.
The results obtained with GFT-IF shows a clean simple gradient in the trend component, while the results obtained with DB-IF shows a more complex structure in the trend component with some boundary effects probably due to the fact that the windows of the DB-IF algorithm are shifted towards the center of the domain.

\begin{figure}[h]
  \makebox[\textwidth][c]{
    \begin{subfigure}[t]{0.48\textwidth}
        \centering
        \includegraphics[width=\textwidth]{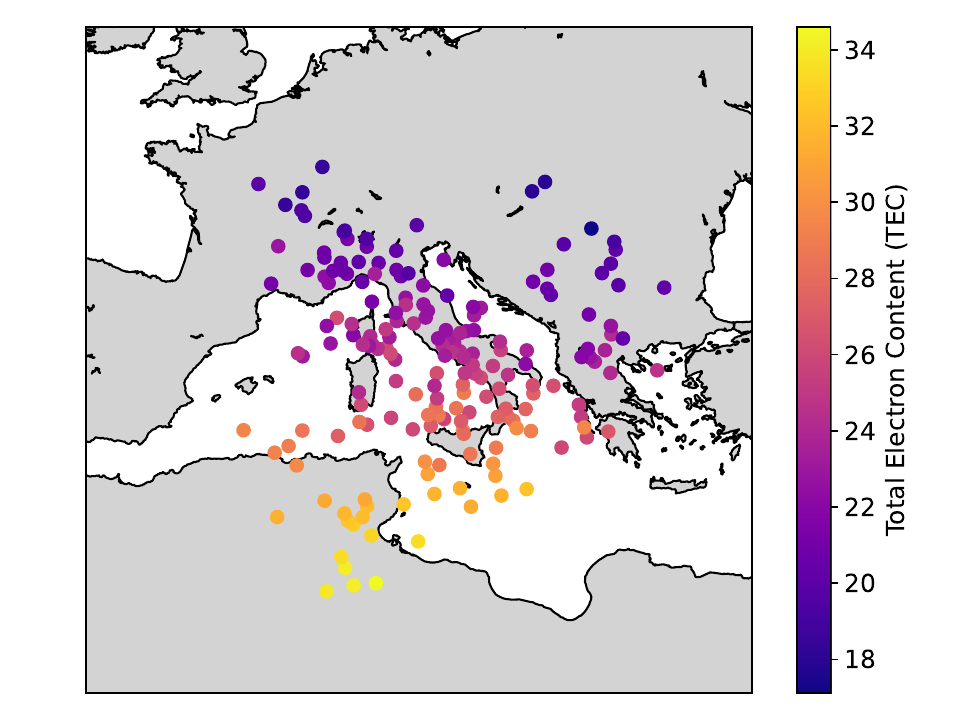}
        \caption{Signal $s$ representing the TEC over Italy.\label{fig:ex_4signal}}
    \end{subfigure}
    \begin{subfigure}[t]{0.48\textwidth}
      \centering
      \includegraphics[width=\textwidth]{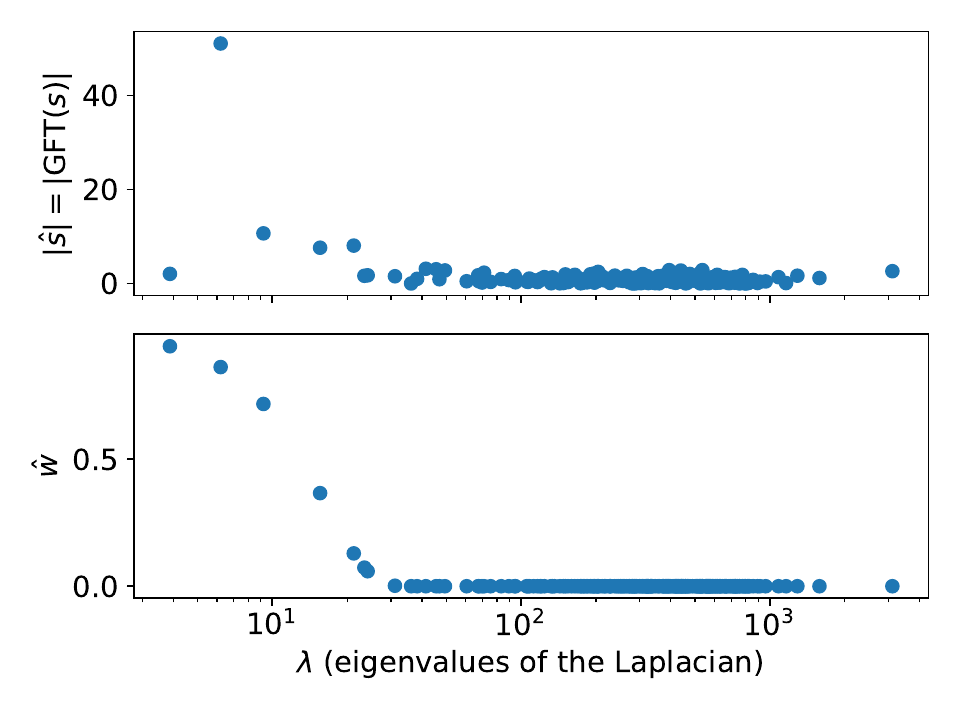}
      \caption{GFT of the signal $s$ (see \Cref{fig:ex_4signal}) compared with the convolution kernel used in the GFT-IF algorithm.\label{fig:ex_4GFT_kernel}}
    \end{subfigure}
  }
  \caption{\label{fig:ex_4signal_and_kernel}}
\end{figure}

\begin{figure}[h]
  \makebox[\textwidth][c]{
    \begin{subfigure}{0.33\textwidth}
      \centering
      \includegraphics[width=\textwidth]{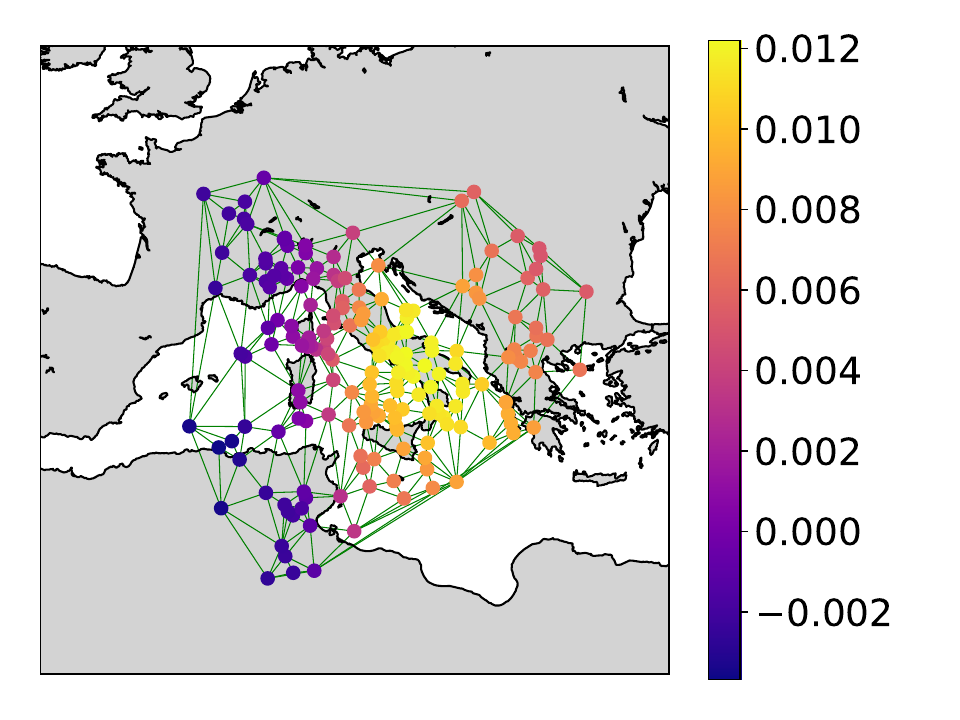}
    \end{subfigure}
    \begin{subfigure}{0.33\textwidth}
      \centering
      \includegraphics[width=\textwidth]{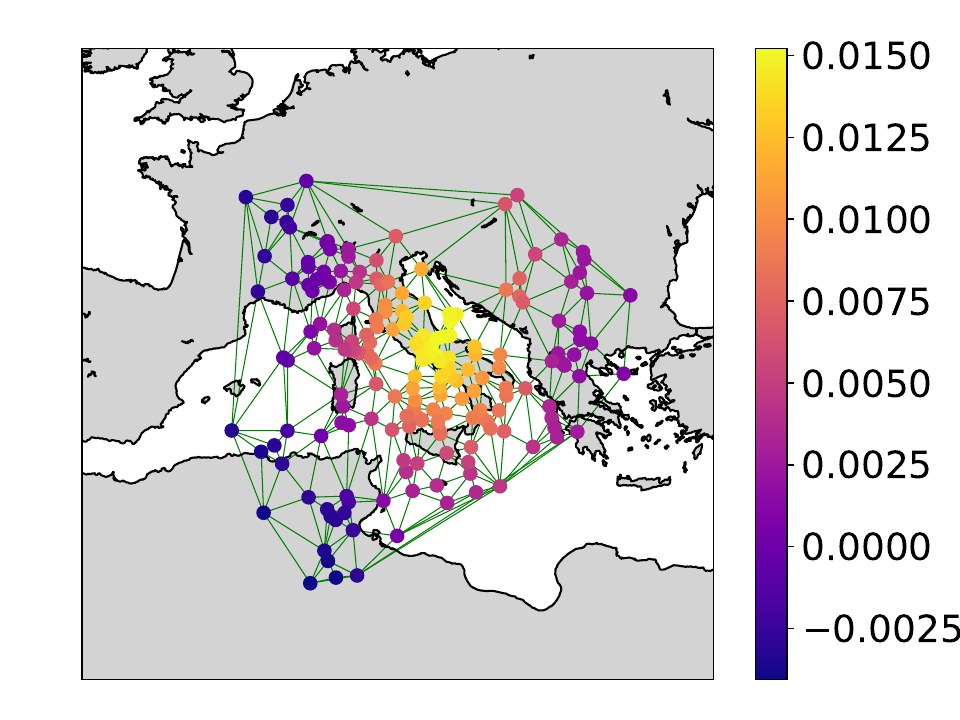}
    \end{subfigure}
    \begin{subfigure}{0.33\textwidth}
      \centering
      \includegraphics[width=\textwidth]{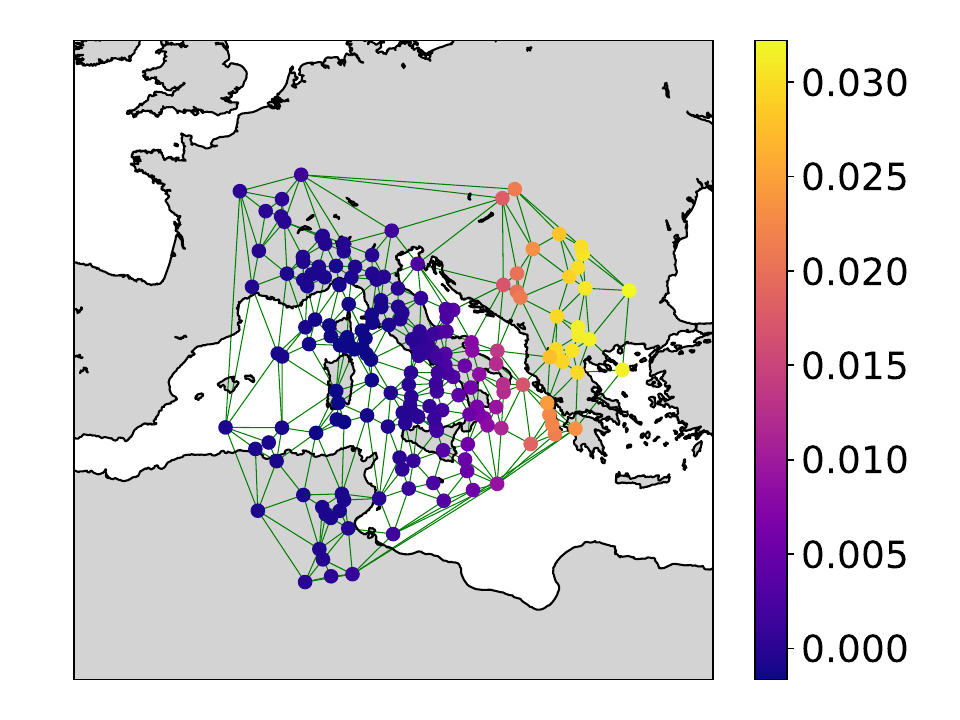}
    \end{subfigure}
  }
  \makebox[\textwidth][c]{
    \begin{subfigure}{0.33\textwidth}
      \centering
      \includegraphics[width=\textwidth]{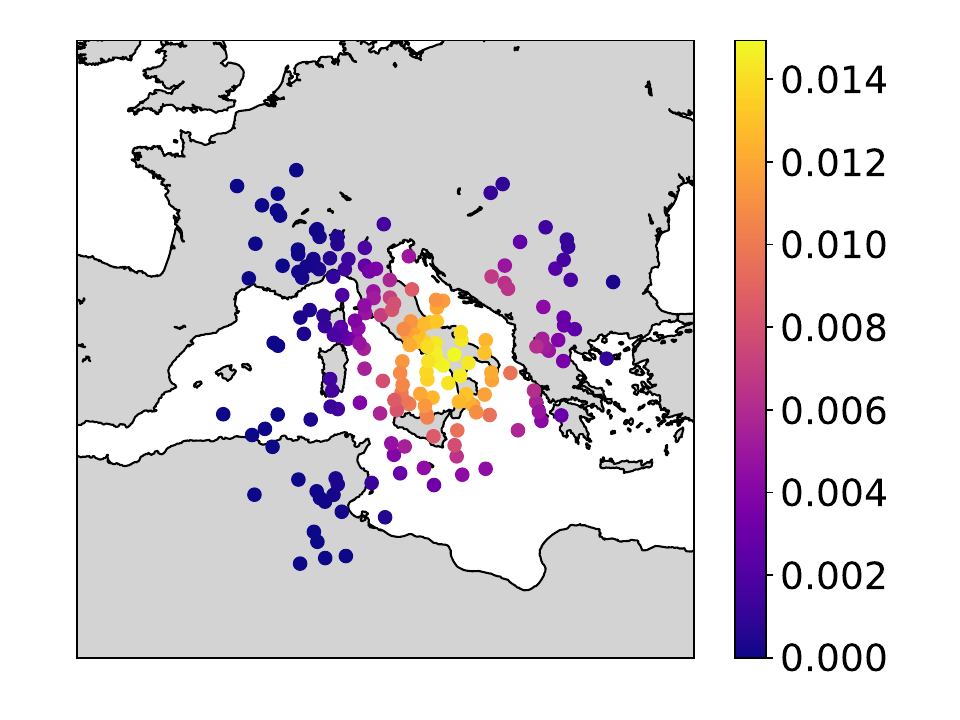}
    \end{subfigure}
    \begin{subfigure}{0.33\textwidth}
      \centering
      \includegraphics[width=\textwidth]{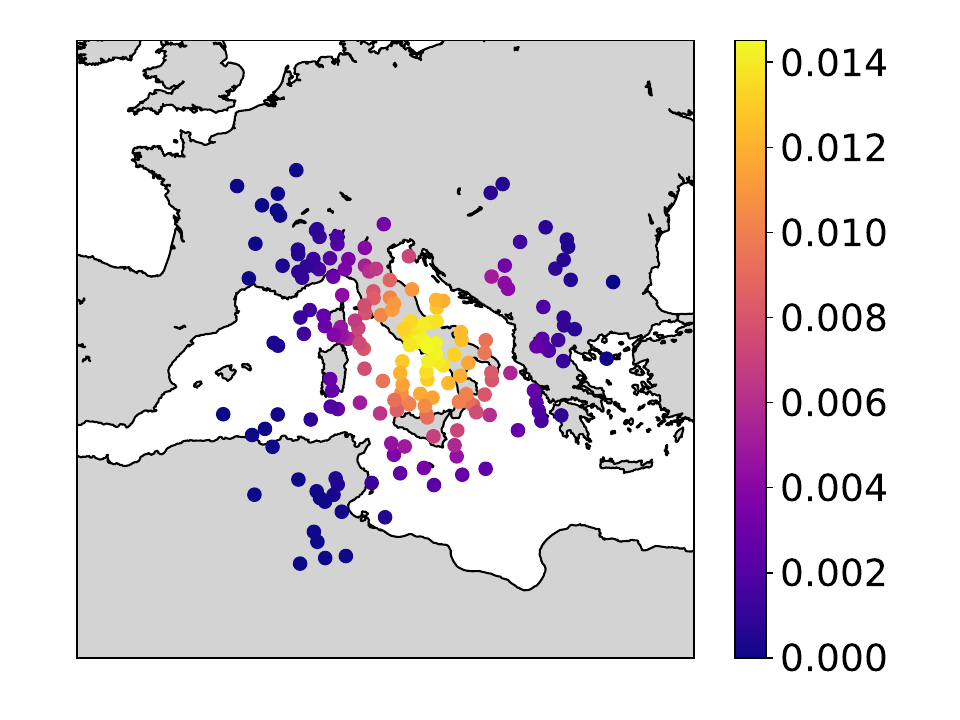}
    \end{subfigure}
    \begin{subfigure}{0.33\textwidth}
      \centering
      \includegraphics[width=\textwidth]{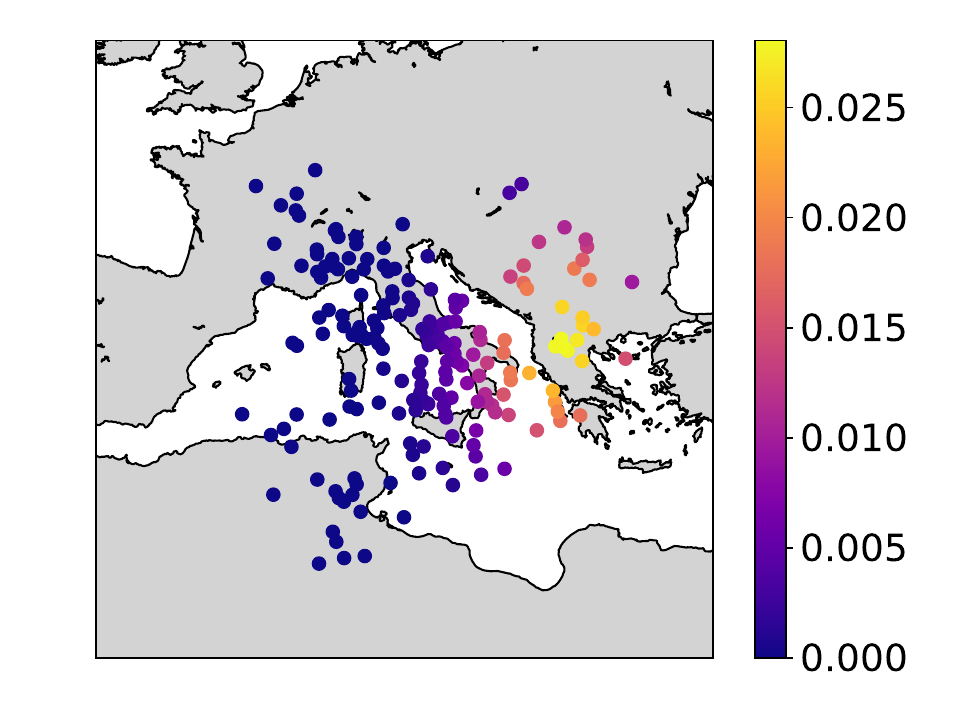}
    \end{subfigure}
  }
  \caption{Examples of the window functions used in the GFT-IF (first row) and DB-IF (second row) algorithms. In the case of the GFT-IF algorithm, those windows are obtained by convolving the kernel $w$ with delta signals centered at different vertices of the graph. In the case of the DB-IF algorithm, those windows correspond to rows of the window matrix $W$.
  \label{fig:ex_4windows}}
\end{figure}

\begin{figure}[h]
  \centering
  \makebox[\textwidth][c]{
    \begin{subfigure}{0.33\textwidth}
      \centering
      \includegraphics[width=\textwidth]{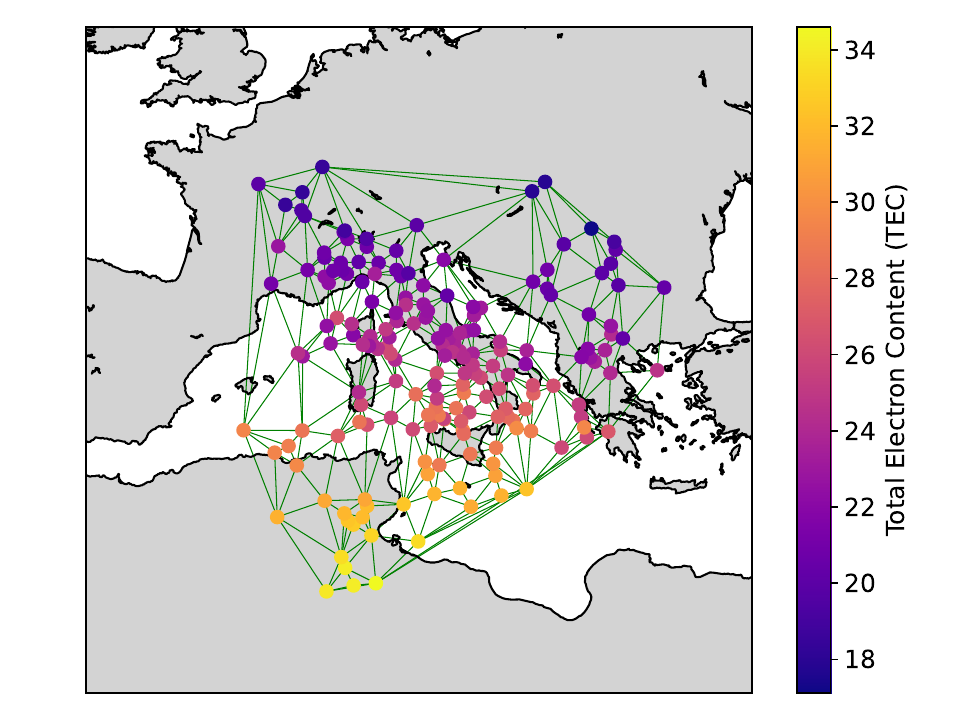}
      \caption{\label{fig:ex_4signal_GFT_results}}
    \end{subfigure}
    \begin{subfigure}{0.33\textwidth}
      \centering
      \includegraphics[width=\textwidth]{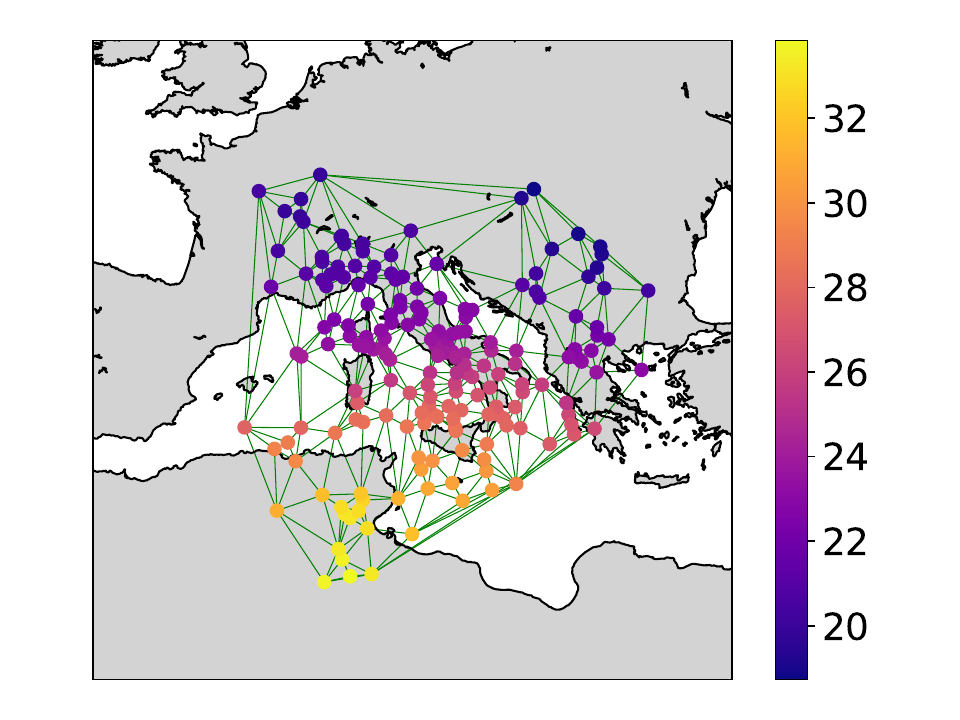}
      \caption{\label{fig:ex_4residual}}
    \end{subfigure}
    \begin{subfigure}{0.33\textwidth}
      \centering
      \includegraphics[width=\textwidth]{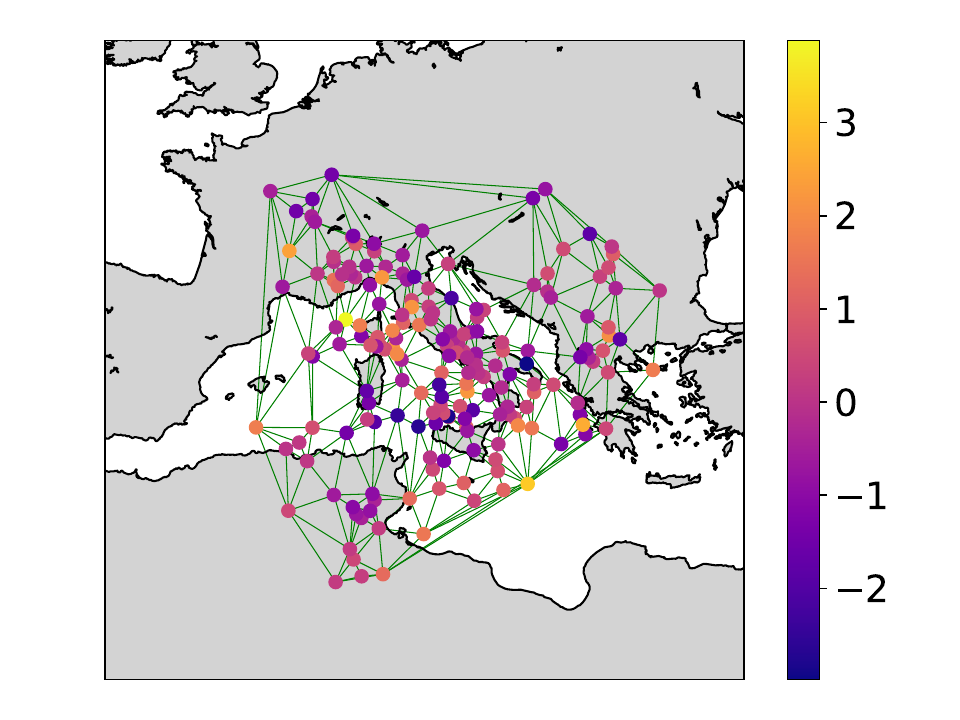}
      \caption{\label{fig:ex_4IMF_0}}
    \end{subfigure}
  }
  \caption{Results of the GFT-IF algorithm applied to the signal $s$ (\subref{fig:ex_4signal_GFT_results}). The figure~(\subref{fig:ex_4residual}) shows
  the residual signal while the figure~(\subref{fig:ex_4IMF_0}) shows the first IMF obtained with the GFT-IF algorithm.\label{fig:ex_4GFT_results}}
\end{figure}

\begin{figure}[h]
  \centering
  \makebox[\textwidth][c]{
    \begin{subfigure}{0.33\textwidth}
      \centering
      \includegraphics[width=\textwidth]{./example_4_DB_IF_signal.pdf}
      \caption{\label{fig:ex_4signal_DB_results}}
    \end{subfigure}
    \begin{subfigure}{0.33\textwidth}
      \centering
      \includegraphics[width=\textwidth]{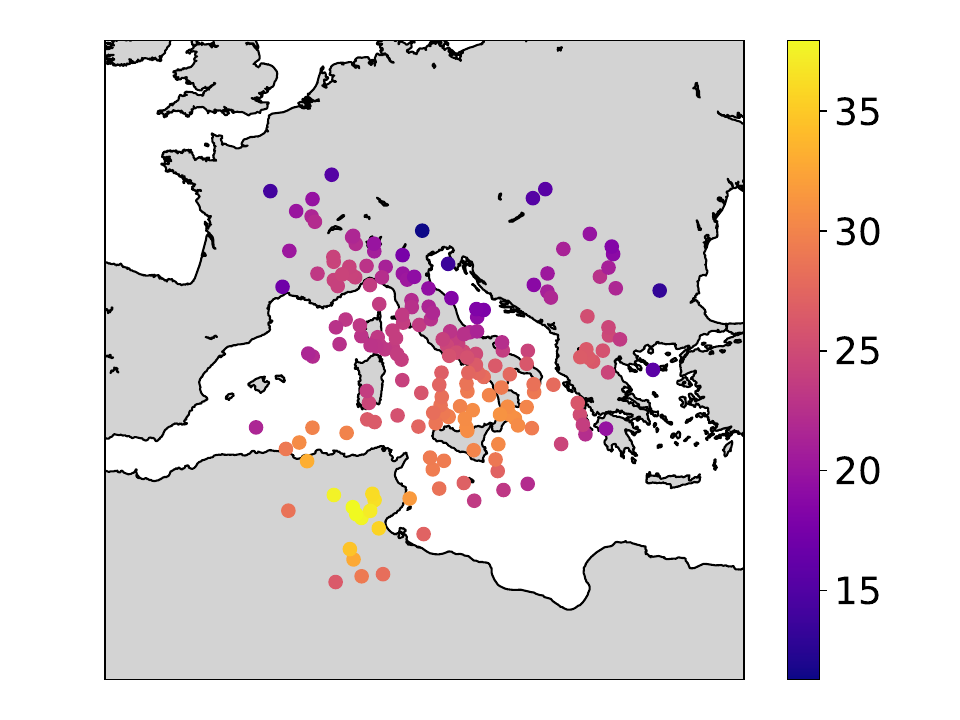}
      \caption{\label{fig:ex_4DB_residual}}
    \end{subfigure}
    \begin{subfigure}{0.33\textwidth}
      \centering
      \includegraphics[width=\textwidth]{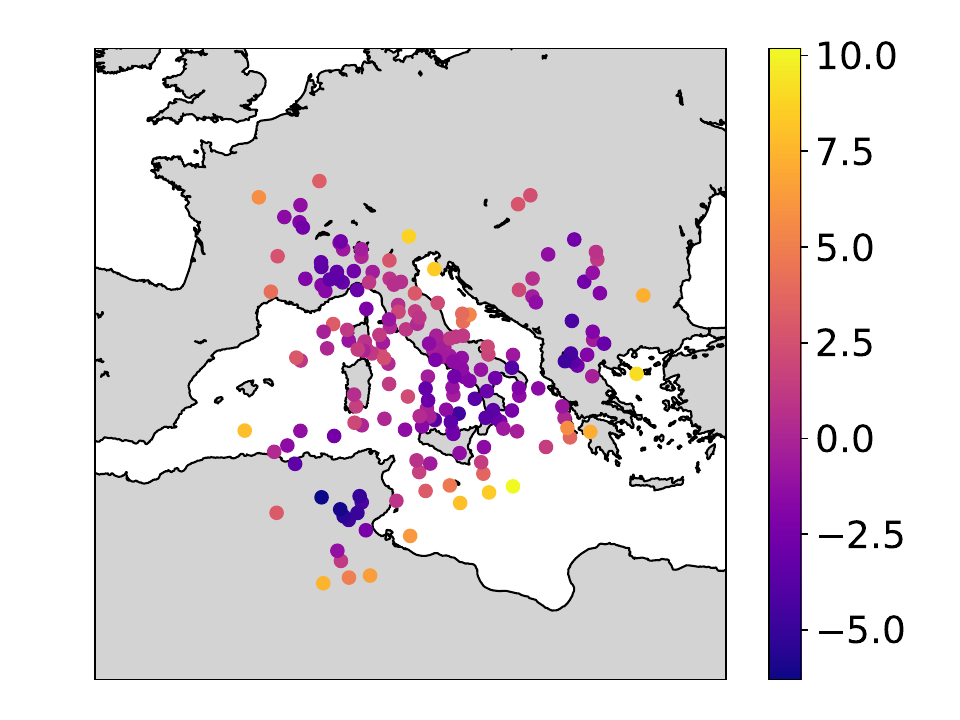}
      \caption{\label{fig:ex_4DB_IMF_0}}
    \end{subfigure}
  }
  \caption{Results of the DB-IF algorithm applied to the signal $s$ (\subref{fig:ex_4signal_DB_results}). The figure~(\subref{fig:ex_4DB_residual}) shows
  the residual signal while the figure~(\subref{fig:ex_4DB_IMF_0}) shows the first IMF obtained with the DB-IF algorithm.\label{fig:ex_4DB_results}}
\end{figure}

\FloatBarrier

\section{Conclusion}

This work extends the Iterative Filtering
algorithm to signals on graphs. We proposed two algorithms: the Graph Fourier Transform Iterative Filtering and the Distance Based Iterative Filtering.
The DB-IF algorithm uses a distance matrix to create window functions
centered at each vertex, allowing direct control over window size. The GFT-IF
algorithm, instead, uses the Graph Fourier Transform to obtain a convolution-like
operation on graphs, leveraging the graph Laplacian.

Numerical results show that both DB-IF and GFT-IF are able to
decompose signals on graphs.
In particular, in the examples that we presented, the obtained decompositions are qualitatively similar to the expected ones.

An important aspect that was not addressed in this work is the choice of the
window size for the DB-IF algorithm and the convolution kernel for the GFT-IF
algorithm. For the numerical results, we have chosen those parameters manually.

Future work should focus on analyzing whether the accuracy of the results can be improved. We plan to study the influence of the boundaries, compared with classical IF-based methods, like FIF and FIF2 \cite{MIF,sfarra2022maximizing}, and develop strategies to minimize the potential errors.
Furthermore, different strategies for choosing
the window size and the convolution kernel should be explored and compared
against each other.
In addition, graphs with different structures should be tested
to see how the algorithms perform in different scenarios.
Some interesting cases to test could be, for example, abstract graphs with no natural embedding, such as social networks, or graphs representing discretization of manifolds with more complex geometries such as spheres.
Finally, since the computation of the GFT is expensive for large graphs, approximation methods could be explored to reduce the complexity of the GFT-IF algorithm exploring approximated spectral decompositions of the graph Laplacian.

\section{Acknowledgments}

A. C., M.D., and G.S. are members of the Gruppo Nazionale Calcolo Scientifico-Istituto Nazionale di Alta Matematica (GNCS-INdAM).

A.C. was partially supported through the GNCS-INdAM Project CUP E53C23001670001, and was supported by the Italian Ministry of the University and Research and the European Union through the ``Next Generation EU'', Mission 4, Component 1, under the PRIN PNRR 2022 grant number CUP E53D23018040001 ERC field PE1 project P2022XME5P titled ``Circular Economy from the Mathematics for Signal Processing prospective''.

M.D., and G.S. were partially supported by the Italian Ministry of the University through the PRIN 2022 project ``Inverse Problems in the Imaging Sciences (IPIS)'' (2022ANC8HL).

\FloatBarrier

\bibliographystyle{cas-model2-names}

\bibliography{bibliography}

\end{document}